\definecolor{red}{cmyk}{0,1,1,0}
\newcommand{\alt}{\tilde{\alpha}}
\newcommand{\dia}{\diamondsuit}
\newcommand{\El}{\mathbf{L}}
\newcommand{\geh}{\mathfrak{g}}
\newcommand{\gehc}{\overset{\lower3pt\hbox{${\scriptstyle \circ}$}}{\geh}}
\newcommand{\gehz}{\geh_{\overline{0}}}
\newcommand{\la}{\lambda}
\newcommand{\La}{\Lambda}
\newcommand{\Lab}{\overline{\Lambda}}
\newcommand{\Lat}{\tilde{\La}}
\newcommand{\ol}{\overline}
\newcommand{\ot}{\otimes}
\newcommand{\Pb}{\overline{P}}
\newcommand{\Pt}{\tilde{P}}
\newcommand{\qbin}[2]{\genfrac{[}{]}{0pt}{}{#1}{#2}}
\newcommand{\wt}{\mathrm{wt}\,}
\newcommand{\Z}{\mathbb{Z}}
\newcommand{\cell}{{\setlength{\unitlength}{.4mm}
\begin{picture}(4,4)\multiput(0,0)(4,0){2}{\line(0,1){4}}
\multiput(0,0)(0,4){2}{\line(1,0){4}}\end{picture}}}
\newcommand{\cells}{{\setlength{\unitlength}{.28mm}
\begin{picture}(4,4)\multiput(0,0)(4,0){2}{\line(0,1){4}}
\multiput(0,0)(0,4){2}{\line(1,0){4}}\end{picture}}}
\newcommand{\vdom}{{\Yvcentermath1\Yboxdim4pt \,\yng(1,1)\,}}
\newcommand{\hdom}{{\Yvcentermath1\Yboxdim4pt \,\yng(2)\,}}
\newcommand{\sdom}{{\Yvcentermath1\Yboxdim4pt \,\yng(1)\,}}
\newcommand{\gnode}{
\color[cmyk]{0,0,0,0.3}
\put(-0.3,0){\rule{9.8pt}{9.8pt}}
\color{black}}
\numberwithin{equation}{section}
\newtheorem{theorem}{Theorem}
\newtheorem{prop}[theorem]{Proposition}
\newtheorem{lemma}[theorem]{Lemma}
\newtheorem{cor}[theorem]{Corollary}
\theoremstyle{definition}
\newtheorem{definition}{Definition}
\newtheorem{remark}{Remark}
\newtheorem{example}[remark]{Example}
\numberwithin{theorem}{section}
\numberwithin{definition}{section}
\numberwithin{remark}{section}
\begin{document}

\title[Stable rigged configurations]
{Stable rigged configurations for quantum affine algebras
 of nonexceptional types}

\author[M.~Okado]{Masato Okado}
\address{Department of Mathematical Science,
Graduate School of Engineering Science, Osaka University,
Toyonaka, Osaka 560-8531, Japan}
\email{okado@sigmath.es.osaka-u.ac.jp}

\author[R.~Sakamoto]{Reiho Sakamoto}
\address{Department of Physics, Tokyo University of Science,
Kagurazaka, Shinjuku-ku, Tokyo 162-8601, Japan}
\email{reiho@rs.tus.ac.jp}

\thanks{\textit{Date:} May 20, 2011.}

\begin{abstract}
For an affine algebra of nonexceptional type in the large rank
we show the fermionic formula depends only on the attachment of the
node $0$ of the Dynkin diagram to the rest, and the fermionic formula
of not type $A$ can be expressed as a sum of that of type $A$ with 
Littlewood-Richardson coefficients. Combining this result with \cite{KSS} and \cite{LOS}
we settle the $X=M$ conjecture under the large rank hypothesis.
\end{abstract}

\maketitle

\section{Introduction}

Let $\geh$ be an affine Lie algebra and $I$ the index set of its Dynkin nodes. Let $\gehc$
be the classical subalgebra of $\geh$, namely, the finite-dimensional simple Lie algebra whose 
Dynkin nodes are given by $I_0:=I\setminus\{0\}$ where the node $0$ is taken as in \cite{Kac}. 
Let $U'_q(\geh)$ be the quantized enveloping algebra associated to $\geh$ without the degree 
operator. Among finite-dimensional $U'_q(\geh)$-modules there is a distinguished family called 
Kirillov-Reshetikhin (KR) modules, which have nice properties such as $T$($Q,Y$)-systems, 
fermionic character formulas, and so on. See for instance \cite{DK,Her,KNT,Nak} and references therein.
In \cite{HKOTY,HKOTT}, assuming the existence of crystal basis $B^{r,s}$ ($r\in I_0,s\in\Z_{>0}$)
of a KR module we defined the one-dimensional (1-d) sum 
\[
X_{\la,B}(q)=\sum_{b\in B}q^{D(b)}
\]
where the sum is over $I_0$-highest weight vectors in $B=B^{r_1,s_1}\ot\cdots\ot B^{r_m,s_m}$
with weight $\la$ and $D$ is a certain $\Z$-valued function on $B$ called the energy function
(see e.g. (3.9) of \cite{HKOTT}), 
and conjectured that $X$ has an explicit expression $M$ (see \eqref{fermi}) called the fermionic 
formula ($X=M$ conjecture). This conjecture is 
settled in full generality if $\geh=A_n^{(1)}\,$\cite{KSS}, when $r_j=1$ for all $j$ if $\geh$
is of nonexceptional affine types \cite{SS}, and when $s_j=1$ for all $j$ if $\geh=D_n^{(1)}$\,\cite{Sch}.
It should also be noted that recently the existence of KR crystals for nonexceptional affine types were
settled \cite{O,OS} and their combinatorial structures were clarified \cite{FOS}.

Another interesting equality related to $X$ is the $X=K$ conjecture by Shimozono and Zabrocki
\cite{SZ,Sh} that originated from the study of certain $q$-deformed operators on the ring of
symmetric functions. Suppose $\geh$ is of nonexceptional type. If the rank of $\geh$ is sufficiently
large, $X$ does not depend on $\geh$ itself, but only on the attachment of the affine Dynkin node
$0$ to the rest of the Dynkin diagram. See Table \ref{tab:diamond}. Let $X_{\la,B}^\dia(q)$ 
($\dia=\varnothing,\cell,\hdom,\vdom$) denote the 1-d sum for $\geh$ of kind $\dia$. Then the
$X=K$ conjecture, which has been settled in \cite{Sh,LS,LOS}, states that if $\dia\ne\varnothing$,
the following equality holds.
\begin{equation} \label{X=K}
X^\dia_{\la,B}(q)=q^{-\frac{|B|-|\la|}{|\dia|}}
\sum_{\mu\in\mathcal{P}^\dia_{|B|-|\la|},\eta\in\mathcal{P}^\cells_{|B|}}c_{\la\mu}^\eta
X^\varnothing_{\eta,B}(q^\frac2{|\dia|})
\end{equation}
Here $|B|=\sum_{i=1}^mr_is_i$, $\mathcal{P}_N^\dia$ is the set of partitions of $N$ whose diagrams
can be tiled from $\dia$, and $c_{\la\mu}^\eta$ is the Littlewood-Richardson coefficient. Note also 
that $\ol{X}^\dia_{\la,B}(q)$ in \cite{LOS} is related to our $X^\dia_{\la,B}(q)$ by 
$\ol{X}^\dia_{\la,B}(q)=X^\dia_{\la,B}(q^{-1})$. Let us
sketch out the proof in \cite{LOS}. Since the 1-d sum depends only on $\dia$, for each $\dia$ we choose 
$\geh=\geh^\dia$ such that $i\mapsto n-i$ for $i\in I$ induces a Dynkin diagram automorphism.
Namely, we choose $\geh^\dia=D_{n+1}^{(2)},C_n^{(1)},D_n^{(1)}$ for $\dia=\cell,\hdom,\vdom$. Then
on each KR crystal $B^{r,s}$ for $\geh^\dia$ one can show there exists an automorphism $\sigma$
satisfying $\sigma\circ\tilde{e}_i=\tilde{e}_{n-i}\circ\sigma$ for any $i\in I$, where $\tilde{e}_i$
is the Kashiwara operator. This automorphism $\sigma$ can be extended to the tensor product $B$ of 
KR crystals. Let $High$ be the map sending an element $b$ of $B$ to the $I_0$-highest weight vector of 
the $I_0$-highest component containing $b$. Under the assumption that the rank is large, the 
composition $High\circ\sigma$ has the following properties.
\begin{itemize}
\item[(i)] The image of an $I_0$-highest weight vector can be regarded as an element for type $A_n^{(1)}$.
\item[(ii)] For an $I_0$-highest weight vector $b$ for type $A_n^{(1)}$ of weight $\eta$, the number
	of $I_0$-highest weight vectors of weight $\la$ in the inverse image of $b$ is given by 
	$\sum_{\mu\in\mathcal{P}^\dia_{|B|-|\la|}}c_{\la\mu}^\eta$.
\item[(iii)] For an $I_0$-highest weight vector $b$ for $\geh^\dia$, we have $D(b)=D(\sigma(b))
	-\frac{|B|-|\la|}{|\dia|}$.
\end{itemize}
\eqref{X=K} is a direct consequence of these properties.

If we believe the $X=M$ conjecture, we have the right to expect exactly the same relation in the $M$
side under the same assumption of the rank. This is what we wish to clarify in this paper. Namely, if 
$\geh$ is one of nonexceptional affine type and the rank is sufficiently large, we show the fermionic
formula depends only on the symbol $\dia$, denoted by $M^\dia(\la,\El;q)$, and if $\dia\ne\varnothing$
we have (Theorem \ref{th:M=K})
\begin{equation} \label{M=K}
M^\dia(\la,\El;q)=q^{-\frac{|\El|-|\la|}{|\dia|}}
\sum_{\mu\in\mathcal{P}^\dia_{|\El|-|\la|},\eta\in\mathcal{P}^\cells_{|\El|}}c_{\la\mu}^\eta
M^\varnothing(\eta,\El;q^\frac2{|\dia|}).
\end{equation}
Here $\El=(L_i^{(a)})_{a\in I_0,i\in\Z_{>0}}$ is a datum such that $L_i^{(a)}$ counts the number of
$B^{a,i}$ in $B$ and $|\El|=\sum_{a\in I_0,i\in\Z_{>0}}aiL_i^{(a)}$.

The proof of \eqref{M=K} proceeds as follows. We first rewrite the fermionic formula as
\[
M^\dia(\la,\El;q)=\sum_{(\nu^\bullet,J^\bullet)\in\mathrm{RC}^\dia(\la,\El)}q^{c(\nu^\bullet,J^\bullet)}
\]
by introducing the notion of stable rigged configuration. We then construct for $\dia\ne\varnothing$ a
bijection 
\[
\Psi:\mathrm{RC}^\dia(\la,\El)\longrightarrow 
\bigsqcup_{\mu\in\mathcal{P}^\dia_{|\El|-|\la|},\eta\in\mathcal{P}^\cells_{|\El|}}
\mathrm{RC}^\varnothing(\eta,\El)\times LR_{\la\mu}^\eta,
\]
where $LR_{\la\mu}^\eta$is the set of Littlewood-Richardson skew tableaux of shape $\eta/\la$ and
weight $\mu$. Roughly speaking, the bijection $\Psi$ proceeds as follows. When the rank 
is sufficiently large, there exists $k$ such that the $a$-th configuration $\nu^{(a)}$ is the same
for $a=k,k+1,\ldots$. As opposed to the KKR algorithm that removes a box from $\nu^{(a)}$ starting
from $a=1$, we perform a similar algorithm starting from the largest $a$. If we continue this 
procedure until all boxes are removed from $\nu^{(a)}$ for sufficiently large $a$, we can regard
this as a rigged configuration of type $A$. Reflecting this sequence of procedures we can also 
define a recording tableau, that is shown to be a Littlewood-Richardson skew tableau. This map can 
be reversed at each step, and therefore defines a bijection.

Finally we show 
\[
c(\nu^\bullet,J^\bullet)=c(\nu'^\bullet,J'^\bullet)-\frac{|\El|-|\la|}{|\dia|}
\]
where $(\nu'^\bullet,J'^\bullet)$ is the first component of the image of $(\nu^\bullet,J^\bullet)$
by $\Psi$. We note that two equalities \eqref{X=K} and \eqref{M=K} together with the result of 
\cite{KSS} implies 
\[
X^\dia_{\la,B}(q)=M^\dia(\la,\El;q)
\]
for $\dia\ne\varnothing$ and therefore settle the $X=M$ when $\geh$ is of nonexceptional type and 
the rank is sufficiently large.

Let us summarize the combinatorial bijections that are
relevant to our paper as the following
schematic diagram:
\medskip
\begin{center}
\unitlength 10pt
\begin{picture}(25,8)
\put(0,7){\{type $\mathfrak{g}$ path\}}
\put(0.3,0){\{type $\mathfrak{g}$ RC\}}
\put(3,3.2){\vector(0,1){3}}
\put(3,4.2){\vector(0,-1){3}}
\put(1,3.5){(b)}
\put(15.3,0){\{type $A^{(1)}_n$ RC\} $\times$ LR}
\put(15,7){\{type $A^{(1)}_n$ path\} $\times$ LR}
\put(20,3.2){\vector(0,1){3}}
\put(20,4.2){\vector(0,-1){3}}
\put(20.5,3.5){(a)}
\put(10,0.2){\vector(1,0){5}}
\put(11,0.2){\vector(-1,0){5}}
\put(10,0.7){$\Psi$}
\end{picture}
\end{center}
\vspace{3mm}
Here ``path" stands for the highest weight elements of $\bigotimes_iB^{r_i,s_i}$
and ``RC" stands for the rigged configurations.
Our bijection $\Psi$, that exists when the rank is large, corresponds to the bottom edge.
Bijection (a), which we call type $A^{(1)}_n$ RC-bijection,
is established in full generality in the papers \cite{KKR,KR,KSS}.
Algorithms for bijection (b) are known explicitly in the following cases:
\begin{itemize}
\item $(B^{1,1})^{\otimes L}$ type paths for all
nonexceptional algebras $\mathfrak{g}$ \cite{OSS},
\item $\bigotimes B^{r_i,1}$ type paths for $\mathfrak{g}=D^{(1)}_n$
\cite{Sch},
\item $\bigotimes B^{1,s_i}$ type paths for all
nonexceptional algebras $\mathfrak{g}$ \cite{SS}.
\end{itemize}
For the cases that the bijection (b) is established,
our bijection $\Psi$ thus gives the combinatorial bijection
between the set of type $\mathfrak{g}$ paths and
the product set of the type $A^{(1)}_n$ paths and the
Littlewood-Richardson skew tableaux.
We refer to \cite{Sh} for related combinatorial problems.

We expect that the bijection (b) exists in full generality
even without the large rank hypothesis.
It will give a combinatorial proof of the $X=M$ conjecture. Furthermore, it also 
gives an essential tool
for the study of a tropical integrable system known as
the box-ball system (see e.g., \cite{FOY,HHIKTT,HKT})
which is a soliton system defined on the paths and is
supposed to give a physical background for the $X=M$ identities.
More precisely, the rigged configurations are identified with the
complete set of the action and angle variables for the type
$A^{(1)}_n$ box-ball system \cite{KOSTY}
(see \cite{KSY2} for a generalization to type $D^{(1)}_n$).
It is also interesting to note that by introducing a tropical analogue of
the tau functions in terms of the charge $c(\nu^\bullet,J^\bullet)$,
the initial value problem for the type $A^{(1)}_n$ box-ball
systems is solved in \cite{KSY1,Sak1}.
Therefore the construction of the bijection (b) in full generality will be
a very important future problem.

\section*{Acknowledgments}
\smallskip\par\noindent
M.O. thanks Ghislain Fourier, C\'edric Lecouvey, Anne Schilling and Mark Shimozono for
fruitful collaborations related to this work.
Without them this work would not have been possible. 
R.S. would like to thank Anatol N. Kirillov,
Anne Schilling and Mark Shimozono for valuable discussion related to this work.
The authors are partially supported by the Grants-in-Aid for Scientific Research 
No. 20540016 (M.O.) and No. 21740114 (R.S.) from JSPS.

\section{Stable rigged configurations}

\subsection{Affine algebras} \label{subsec:affine}
We recall necessary notations for affine Kac-Moody algebras.
We adopt the notation of \cite{HKOTT}. Let
$\geh$ be a Kac-Moody Lie algebra of nonexceptional affine type
$X^{(r)}_N$, that is, one of the types $A^{(1)}_n (n \ge 1)$,
$B^{(1)}_n (n \ge 3)$, $C^{(1)}_n (n \ge 2)$, $D^{(1)}_n (n \ge 4)$, $A^{(2)}_{2n}(n\ge1)$, 
$A^{(2)}_{2n-1}(n\ge2)$, $D^{(2)}_{n+1}(n\ge2)$. The nodes of the Dynkin diagram of $\geh$
are labeled by the set $I=\{0,1,2\dotsc,n\}$. See Table 2.1 of \cite{HKOTT}.
Let $\alpha_i,h_i,\La_i$ ($i \in I$) be the simple roots, simple
coroots, and fundamental weights of $\geh$. Let $\delta$ and $c$
denote the generator of imaginary roots and the canonical central
element, respectively. Recall that $\delta=\sum_{i\in
I}a_i\alpha_i$ and $c = \sum_{i \in I}a^\vee_i h_i$, where the Kac
labels $a_i$ are the unique set of relatively prime positive
integers giving the linear dependency of the columns of the Cartan
matrix $A$, that is, $A\,{}^t\!(a_0,\dotsc,a_n) = 0$. Explicitly,
\begin{equation}
\delta =
\begin{cases}
\alpha_0+\cdots + \alpha_n & \text{if $\geh=A^{(1)}_n$} \\
\alpha_0+\alpha_1+2\alpha_2+\cdots+2\alpha_n & \text{if $\geh=B^{(1)}_{n}$} \\
\alpha_0+2\alpha_1+\cdots+2\alpha_{n-1}+\alpha_n & \text{if $\geh=C^{(1)}_{n}$} \\
\alpha_0+\alpha_1+2\alpha_2+\cdots+2\alpha_{n-2}
+\alpha_{n-1}+\alpha_n
& \text{if $\geh=D^{(1)}_{n}$} \\
2\alpha_0+2\alpha_1+\cdots+2\alpha_{n-1}+\alpha_n
& \text{if $\geh=A^{(2)}_{2n}$} \\
\alpha_0+\alpha_1+2\alpha_2+\cdots+2\alpha_{n-1}+\alpha_n & \text{if
$\geh=A^{(2)}_{2n-1}$} \\
\alpha_0+\alpha_1+\cdots+\alpha_{n-1}+\alpha_n &\text{if
$\geh=D^{(2)}_{n+1}$}.
\end{cases}
\end{equation}
The dual Kac label $a^\vee_i$ is the label $a_i$ for the affine
Dynkin diagram obtained by reversing the arrows of the Dynkin
diagram of $\geh$. Note that $a_0^\vee=1$.

Let $(\cdot|\cdot)$ be the normalized invariant form on the weight lattice $P$ \cite{Kac}. It satisfies
\begin{equation}
  (\alpha_i|\alpha_j) = \dfrac{a_i^\vee}{a_i} A_{ij}
\end{equation}
for $i,j\in I$. In particular
\begin{equation}
(\alpha_a|\alpha_a)=2r
\end{equation}
if $\alpha_a$ is a long root. For $i \in I$ let
\begin{equation}\label{eq:ttdef}
t_i = \max(\frac{a_i}{a^\vee_i},a^\vee_0), \qquad t^\vee_i =
\max(\frac{a^\vee_i}{a_i},a_0).
\end{equation}
We shall only use $t^\vee_i$ and $t_i$ for $i \in I_0$, where we have set $I_0=I\setminus\{0\}$. 
For $a\in I_0$ we have
\begin{equation*}
t^\vee_a = 1 \,\,\text{ if $r = 1$,} \qquad t_a = 1 \,\,\text{ if $r = 2$}.
\end{equation*}

We consider two finite-dimensional subalgebras of $\geh$: $\gehc$,
whose Dynkin diagram is obtained from that of $\geh$ by removing
the $0$ vertex, and $\gehz$, the subalgebra of $X_N$ fixed by the
automorphism $\sigma$ given in \cite[Section 8.3]{Kac}.
\begin{table}[ht]
\caption{}\label{tab:geh-bar}
\vspace{-1mm}
\begin{center}
\begin{tabular}{c|ccccc}
$\geh$ & $X^{(1)}_N$ & $A^{(2)}_{2n}$ & $A^{(2)}_{2n-1}$ & $D^{(2)}_{n+1}$ \\
\hline
$\gehc$ & $X_N$ & $C_n$ & $C_n$ & $B_n$ \\
$\gehz$ & $X_N$ & $B_n$ & $C_n$ & $B_n$
\end{tabular}
\end{center}
\end{table}
Let $\gehc$ (resp. $\gehz$) have weight lattice $\Pb$ (resp.
$\Pt$), with simple roots and fundamental weights
$\alpha_a,\Lab_a$ (resp. $\alt_a,\Lat_a$) for $a\in I_0$. Note that
$\gehc=\gehz$ for $\geh\not=A^{(2)}_{2n}$. For $\geh=A^{(2)}_{2n}$,
$\gehc=C_n$ and $\gehz=B_n$.
$\Pt$ is endowed with the bilinear form $(\cdot | \cdot)'$,
normalized by
\begin{equation} \label{eq:tilde form norm}
  (\alt_a|\alt_a )' = 2r \qquad\text{if
$\alt_a$ is a long root of $\gehz$.}
\end{equation}
For $A^{(2)}_2$ the unique simple root $\alt_1$ of $\gehz=B_1$ is
considered to be short.
Note that $\alpha_a,\Lab_a$ and $(\cdot | \cdot)$ may be
identified with $\alt_a,\Lat_a$ and $(\cdot | \cdot)'$ if $\geh\not= A^{(2)}_{2n}$.

Define the $\Z$-linear map $\iota : \Pb \rightarrow \Pt$ by
\begin{equation}\label{iota}
\iota(\Lab_a) = \epsilon_a \Lat_a \qquad \text{for $a\in I_0$,}
\end{equation}
where $\epsilon_a$ is defined by
\[
\epsilon_a = \begin{cases}
2 & \text{if $\geh=A^{(2)}_{2n}$ and $a=n$}\\
1 & \text{otherwise.}
\end{cases}
\]
In particular $\iota(\alpha_a) = \epsilon_a \alt_a$ for $a\in I_0$.
If $\geh=A^{(2)}_{2n}$, we have $(\alpha_b | \alpha_b)=4,
(\tilde{\alpha}_b | \tilde{\alpha}_b)'=2$ if $b=n$ and $2,4$ otherwise.
In the rest of the paper we shall write $(\cdot \vert\cdot)$ in
place of $(\cdot|\cdot)'$.

We now associate a kind $\dia$ to each nonexceptional affine algebra $\geh$ as follows.
\begin{table}[ht]
\caption{}\label{tab:diamond}
\vspace{-6mm}
\begin{align*}
\begin{array}{|c||c|} \hline
\dia &\text{$\geh$ of kind $\dia$} \\ \hline \hline
\varnothing & A_n^{(1)} \\ \hline
\cell & D_{n+1}^{(2)}, A_{2n}^{(2)} \\ \hline
\hdom & C_n^{(1)} \\ \hline
\vdom & A_{2n-1}^{(2)}, B_n^{(1)}, D_n^{(1)} \\ \hline
\end{array}
\end{align*}
\end{table}
The kind depends precisely on the attachment of the affine Dynkin node $0$ to the rest of the
Dynkin diagram. We use the labeling of the Dynkin nodes in \cite{Kac}. It is also related to 
the $I_0$-decomposition of the Kirillov-Reshetikhin crystal $B^{r,s}$ for $n$ large with respect to $r$
(see \cite{LOS}).

\subsection{Classical weights, Littlewood-Richardson skew tableaux}
For classical simple Lie algebras $\gehc=A_n,B_n,C_n,D_n$ we often identify the dominant integral
weight without spin with a Young diagram. Namely, for $\la=\sum_{j=1}^{n'}m_j\Lab_j$ ($n'=n$ for
$A_n$,\,$=n-1$ for $B_n,C_n$,\,$=n-2$ for $D_n$, $m_j\in\Z_{\ge0}$, $\Lab_j$ is a fundamental weight
of $\gehc$), we associate the Young diagram such that there are exactly $m_j$ columns of height $j$. 
We utilize this identification throughout this paper. As usual, for a Young diagram (or partition) $\la$,
$\ell(\la)$ denotes its depth and $|\la|$ the number of boxes.

Next we explain the Littlewood-Richardson skew tableau. A skew tableau $T$ is called a 
Littlewood-Richardson skew tableau, LR tableau for short, if it is semi-standard and its reverse 
row word is Yamanouchi, {\it i.e.}, when the word $w=x_1\cdots x_l$ is read from the first to any letter,
the sequence $x_1\cdots x_k$ contains at least as many 1's as it does 2's, at least as many 2's as 3's,
and so on for all positive integers. A skew tableau is said to have weight $\mu=(\mu_1,\ldots,\mu_l)$
if it contains $\mu_1$ 1's, $\mu_2$ 2's, and so on up to $\mu_l$ $l$'s.

\begin{example}
The following skew tableau is an example of LR tableau of shape $(43^31^2)/(2^21)$ and of weight
$(3^22^2)$.
\begin{center}
\unitlength 10pt
\begin{picture}(3,7)
\newcommand{\kuu}{{}}
\put(-0.7,0){\young(\kuu\kuu11,\kuu\kuu2,\kuu13,224,3,4)}
\end{picture}
\end{center}
Its reverse row word is $1123142234$, which satisfies the Yamanouchi condition.
\end{example}

It is well known (see e.g.\,\cite{F}) that the number of LR tableaux of shape $\eta/\la$ and 
of weight $\mu$ is
equal to the Littlewoood-Richardson coefficient $c_{\la\mu}^\eta$ that counts the multiplicity 
of $V_\eta$ in $V_\la\ot V_\mu$, where $V_\la$ is the irreducible $\mathfrak{gl}_n$-module of
highest weight $\la$.

\subsection{Rigged configurations}
We recall the fermionic formula given in \cite{HKOTY,HKOTT}.
Let $\Pb^+$ be the set of dominant integral weights of $\gehc$.
Fix $\la\in\Pb^+$ and a matrix
$\El=(L_i^{(a)})_{a\in I_0,i\in\Z_{>0}}$ of nonnegative integers with finitely many positive ones.
Let $\nu=(m_i^{(a)})_{a\in I_0,i\in\Z_{>0}}$ be another matrix of nonnegative integers. 
Say that $\nu$ is a $\la$-configuration if
\begin{equation} \label{config} 
\sum_{a\in I_0,i\in\Z_{>0}} i\,
m_i^{(a)} \alt_a = \iota\Biggl( \sum_{a\in I_0,i\in\Z_{>0}} i \,L_i^{(a)} \Lab_a - \la \Biggr).
\end{equation}
This is equivalent to assuming 
\begin{equation} \label{num of box}
\sum_{i\in\Z_{>0}}im_i^{(a)}=\frac2{(\alt_a|\alt_a)}\sum_{b\in I_0}
\epsilon_b\Biggl(\sum_{i\in\Z_{>0}}iL_i^{(b)}-\langle\la,h_b\rangle\Biggr)(\Lat_a|\Lat_b)
\end{equation}
for $a\in I_0$, where $h_b$ is a simple coroot of $\gehc$.
Say that a configuration $\nu$ is $\El$-admissible if
\begin{equation} \label{ppos}
  p_i^{(a)} \ge 0\qquad\text{for all $a\in I_0$ and
  $i\in\Z_{>0}$,}
\end{equation}
where
\begin{equation} \label{p}
p_i^{(a)} = \sum_{k\in\Z_{>0}} \left( L_k^{(a)} \min(i,k) -
\dfrac{1}{t_a^\vee} \sum_{b\in I_0} (\alt_a|\alt_b)\min(t_b i,t_a
k)\, m_k^{(b)}\right).
\end{equation}
Write $C(\la,\El)$ for the set of $\El$-admissible
$\la$-configurations. Define
\begin{align}
c(\nu) &= \dfrac{1}{2} \sum_{a,b\in I_0} \sum_{j,k\in\Z_{>0}} (\alt_a|\alt_b)
\min(t_b j, t_a k) m_j^{(a)} m_k^{(b)} \label{c}\\
&\hspace{2cm} -\sum_{a\in I_0}t_a^\vee \sum_{j,k\in\Z_{>0}}\min(j,k)L_j^{(a)}m_k^{(a)} \nonumber.
\end{align}
The fermionic formula is defined by
\begin{equation}\label{fermi}
M(\la,\El;q) = \sum_{\nu\in C(\la,\El)} q^{c(\nu)}
\prod_{a\in I_0} \prod_{i\in\Z_{>0}}
\qbin{p_i^{(a)}+m_i^{(a)}}{m_i^{(a)}}_{q^{t^\vee_a}}.
\end{equation}

The fermionic formula $M(\la,\El)$ can be interpreted using
combinatorial objects called rigged configurations. For $a\in I_0$, define
\begin{equation} \label{upsilon}
  \upsilon_a = \begin{cases}
  2 & \text{if $a=n$ and $\geh=C_n^{(1)}$} \\
  \frac{1}{2} & \text{if $a=n$ and $\geh=B_n^{(1)}$} \\
  1 & \text{otherwise.}
  \end{cases}
\end{equation}
$\upsilon_a$ is half the square length of $\alpha_a$ for untwisted
affine types and is equal to $1$ for twisted types.

A quasipartition of type $a\in I_0$ is a finite multiset taken
from the set $\upsilon_a \Z_{>0}$. The diagram of a quasipartition has
rows consisting of boxes with width $\upsilon_a$.
Denote by $(\nu^\bullet,J^\bullet)$ a pair where $\nu^\bullet=\{\nu^{(a)}\}_{a\in I_0}$
is a sequence of quasipartitions with $\nu^{(a)}$ of type $a$ such that
$\nu^{(a)}=(\upsilon_a^{m_1^{(a)}},(2\upsilon_a)^{m_2^{(a)}},\ldots)$ and
$J=\{J^{(a,i)}\}_{a\in I_0,i\in\Z_{>0}}$ is a double sequence of partitions. 
Then a rigged configuration is a pair $(\nu^\bullet,J^\bullet)$ subject to the
restriction \eqref{config} and the requirement that $J^{(a,i)}$
be a partition contained in a $m_i^{(a)}\!\times p_i^{(a)}$ rectangle.
$\nu^\bullet$ is called a configuration and $J^\bullet$ a rigging.
The set of rigged configurations for
fixed $\la$ and $\El$ is denoted by $\mathrm{RC}(\la,\El)$. Then
\eqref{fermi} is equivalent to
\begin{equation*}
M(\la,\El;q)=\sum_{(\nu^\bullet,J^\bullet)\in\mathrm{RC}(\la,\El)} q^{c(\nu^\bullet,J^\bullet)}
\end{equation*}
where $c(\nu^\bullet,J^\bullet)=c(\nu)+|J^\bullet|$
and $|J^\bullet|=\sum_{a\in I_0,i\in\Z_{>0}} t_a^\vee |J^{(a,i)}|$.

For a quasipartition $\mu=(\upsilon^{m_1},(2\upsilon)^{m_2},\ldots)$ with boxes of width $\upsilon$
and $i\in\upsilon\Z_{>0}$, define
\begin{equation} \label{Qdef}
Q_i(\mu)=\sum_k \min(i,\upsilon k)m_k,
\end{equation}
the area of $\mu$ in the first $i/\upsilon$ columns of width $\upsilon$. We also set 
\begin{equation} \label{Qmax}
Q_{\max}(\mu)=\upsilon\sum_k km_k.
\end{equation}
For a configuration $\nu^\bullet=\{\nu^{(a)}\}_{a\in I_0}$ we set $Q_i^{(a)}=Q_i(\nu^{(a)}),
Q_{\max}^{(a)}=Q_{\max}(\nu^{(a)})$.
Then the vacancy numbers $p_i^{(a)}$ are given by
\begin{equation} \label{vacancy}
p_i^{(a)}=\sum_{k\in\Z_{>0}}L_k^{(a)}\min(i,k)+Q_i^{(a-1)}-2Q_i^{(a)}+Q_i^{(a+1)}
\end{equation}
except the ones below in each nonexceptional affine type. 
In the above formula $\nu^{(0)}$ should be considered to be empty.

$A_n^{(1)}$:
\[
p_i^{(n)}=Q_i^{(n-1)}-2Q_i^{(n)}
\]

$B_n^{(1)}$:
\begin{align*}
p_i^{(n-1)}&=Q_i^{(n-2)}-2Q_i^{(n-1)}+2Q_i^{(n)}\\
p_i^{(n)}&=2Q_{i/2}^{(n-1)}-4Q_{i/2}^{(n)}
\end{align*}

$C_n^{(1)}$:
\[
p_i^{(n)}=Q_{2i}^{(n-1)}-Q_{2i}^{(n)}
\]

$D_n^{(1)}$:
\begin{align*}
p_i^{(n-2)}&=Q_i^{(n-3)}-2Q_i^{(n-2)}+Q_i^{(n-1)}+Q_i^{(n)}\\
p_i^{(n-1)}&=Q_i^{(n-2)}-2Q_i^{(n-1)}\\
p_i^{(n)}&=Q_i^{(n-2)}-2Q_i^{(n)}
\end{align*}

$A_{2n}^{(2)}$:
\[
p_i^{(n)}=Q_i^{(n-1)}-Q_i^{(n)}
\]

$A_{2n-1}^{(2)}$:
\begin{align*}
p_i^{(n-1)}&=Q_i^{(n-2)}-2Q_i^{(n-1)}+2Q_i^{(n)}\\
p_i^{(n)}&=Q_i^{(n-1)}-2Q_i^{(n)}
\end{align*}

$D_{n+1}^{(2)}$:
\[
p_i^{(n)}=2Q_i^{(n-1)}-2Q_i^{(n)}
\]
We assumed $L_i^{(a)}=0$ for any pair $(a,i)$ that $p_i^{(a)}$ appears in the above list, 
since it is enough for our calculations later.

We show $p_i^{(a)}\ge0$ for all $i$ such that $m_i^{(a)}>0$ implies $p_i^{(a)}\ge0$ for all the other $i$.
\begin{lemma}\label{lem:convex}
Suppose $m_i^{(a)}=0$ for $k<i<l$. Then the vacancy numbers satisfy the following upper convex relation:
\[
p^{(a)}_{k}+p^{(a)}_{l}\leq
2p^{(a)}_i
\]
for $k<i<l$.
\end{lemma}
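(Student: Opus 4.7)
I would reduce the claim to the pointwise discrete concavity
\[
\Delta^2 p_i^{(a)} := p_{i-1}^{(a)} - 2p_i^{(a)} + p_{i+1}^{(a)} \le 0
\]
at every $i$ in the gap $(k,l)$, from which the stated upper convex relation (together with the companion positivity $p_i^{(a)}\ge 0$ given $p_k^{(a)}, p_l^{(a)}\ge 0$) follows by standard concavity arguments on the integer interval $[k,l]$.

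\textbf{Key identity.} The one identity driving everything is
\[
\min(i-1,r)+\min(i+1,r)-2\min(i,r) = -\delta_{i,r},
\]
which, summed against $m_r$, gives $\Delta^2 Q_i(\mu) = -m_i$ for a quasipartition $\mu$ with $\upsilon=1$. Rescaled versions are handled the same way: for the index-doubled $Q_{2i}$ appearing in the exceptional formula for $C_n^{(1)}$ one obtains $\Delta^2 Q_{2i}(\mu) = -m_{2i-1}-2m_{2i}-m_{2i+1}$, and analogously for $Q_{i/2}$ in $B_n^{(1)}$.

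\textbf{Generic node.} Plugging these into the formula \eqref{vacancy} yields
\[
\Delta^2 p_i^{(a)} = 2m_i^{(a)} - m_i^{(a-1)} - m_i^{(a+1)} - L_i^{(a)},
\]
which is manifestly $\le 0$ as soon as $m_i^{(a)}=0$. For each exceptional formula listed just before the statement (for $A_n^{(1)}$, $B_n^{(1)}$, $C_n^{(1)}$, $D_n^{(1)}$, $A_{2n}^{(2)}$, $A_{2n-1}^{(2)}$, $D_{n+1}^{(2)}$), the same strategy applies: compute $\Delta^2 p_i^{(a)}$ using the corresponding $Q$-second-difference identity. The standing assumption $L_i^{(a)}=0$ at these special pairs $(a,i)$, noted just before the lemma, is exactly what eliminates any positive contribution that would otherwise appear.

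\textbf{From concavity to the inequality.} Once $\Delta^2 p_i^{(a)}\le 0$ is established for every $i$ in the open interval $(k,l)$, the sequence $(p_i^{(a)})_{k\le i \le l}$ is concave on the integer interval $[k,l]$, so $p_i^{(a)}$ lies on or above the chord through its endpoints; in particular the upper convex relation $p_k^{(a)}+p_l^{(a)}\le 2p_i^{(a)}$ is immediate and positivity on the gap follows. The main effort is the case-by-case verification at the exceptional nodes: the strategy is identical across types, but each of the seven families has its own coefficient pattern (and rescaled argument in $B_n^{(1)}$ and $C_n^{(1)}$ where $Q_{i/2}$ or $Q_{2i}$ appears), so one must carefully track the right $\Delta^2 Q$ identity in each case and confirm that the resulting expression is nonpositive under the vanishing hypothesis on $m_i^{(a)}$.
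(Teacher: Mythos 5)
Your proposal is correct and follows essentially the same route as the paper: the paper observes that on the gap $[k,l]$ the term $Q_i^{(a)}$ is linear in $i$ while $Q_i^{(a\pm1)}$ and $\sum_k L_k^{(a)}\min(i,k)$ are upper convex, and your explicit computation $\Delta^2 p_i^{(a)} = 2m_i^{(a)} - m_i^{(a-1)} - m_i^{(a+1)} - L_i^{(a)} \le 0$ is just the quantitative form of that observation. The only caveat is that at the exceptional nodes with rescaled arguments ($Q_{2i}$ for $C_n^{(1)}$, $Q_{i/2}$ for $B_n^{(1)}$) the relevant quasipartition has box width $\upsilon\ne 1$, so the second-difference identity must be adapted accordingly (e.g.\ $\Delta^2_i Q_{2i}(\nu^{(n)})=-2m_i^{(n)}$ for the width-$2$ quasipartition $\nu^{(n)}$ of $C_n^{(1)}$), which you correctly flag as requiring case-by-case care.
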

\begin{proof}
Suppose the vacancy number is given by \eqref{vacancy}.
On the interval $[k,l]$, $Q^{(a)}_i$ is a linear function of $i$
since there is no length $i$ row of $\nu^{(a)}$ for $k<i<l$.
On the other hand, $Q^{(a-1)}_i$ and $Q^{(a+1)}_i$ are upper convex functions of $i$.
Similarly, $\sum_kL_k^{(a)}\min(i,k)$ is also an upper convex function of $i$. Hence,
we obtain the result.
Proof for other $a$'s are the same.
\end{proof}

\begin{cor}\label{cor:positivity}
Suppose $p_i^{(a)}\ge0$ for all $i$ such that $m_i^{(a)}>0$. 
Then we have $p^{(a)}_i\ge0$ for $i\in\Z_{>0}$.
\end{cor}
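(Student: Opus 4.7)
The plan is to leverage Lemma \ref{lem:convex} to propagate positivity of $p_i^{(a)}$ from indices in the support of $\nu^{(a)}$ to all other indices. Fix $a\in I_0$ and set $S:=\{j\in\Z_{>0}:m_j^{(a)}>0\}$, so by hypothesis $p_j^{(a)}\ge 0$ for $j\in S$. Pick $i_0\notin S$ and set $k:=\max\{j\in S:j<i_0\}$ with the convention $k=0$ if the set is empty, and $l:=\min\{j\in S:j>i_0\}$ with $l=\infty$ if empty. By construction $m_j^{(a)}=0$ on $(k,l)$, so Lemma \ref{lem:convex} applies whenever $0<k<l<\infty$, giving $p_k^{(a)}+p_l^{(a)}\le 2 p_{i_0}^{(a)}$ and hence $p_{i_0}^{(a)}\ge 0$ immediately.

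I would then handle the left-boundary case $k=0<l<\infty$ by extending the argument in the proof of Lemma \ref{lem:convex} to the interval $[0,l]$. The function $i\mapsto Q_i^{(a)}$ is linear through the origin on $[0,l]$, since there are no rows of $\nu^{(a)}$ of length below $l\upsilon_a$, while the remaining $L$- and $Q^{(b)}$-contributions with $b\ne a$ are concave and nondecreasing in $i$. Hence $i\mapsto p_i^{(a)}$ is concave on $[0,l]$ with $p_0^{(a)}=0$ and $p_l^{(a)}\ge 0$, so $p_{i_0}^{(a)}\ge (i_0/l)\,p_l^{(a)}\ge 0$.

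The right-boundary case $k>0$, $l=\infty$ calls instead for a monotonicity argument. For $i>k=\max S$, the $Q^{(a)}$-term in the formula for $p_i^{(a)}$ (with argument $i$, $i/2$, or $2i$ depending on the affine type and the node) has stabilized to $Q_{\max}^{(a)}$, while all other contributions are nondecreasing in $i$. Therefore $p_i^{(a)}$ is nondecreasing on $[k,\infty)$, so $p_{i_0}^{(a)}\ge p_k^{(a)}\ge 0$. The degenerate case $S=\varnothing$ is then trivial, since $Q_i^{(a)}\equiv 0$ and each vacancy-number formula reduces to a nonnegative combination of $L$'s and $Q^{(b)}$'s with $b\ne a$.

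The only delicate point will be the several special vacancy-number formulas at the boundary Dynkin nodes of each nonexceptional type, in which $Q^{(a)}$ occurs with argument $i/2$ or $2i$ instead of $i$. I would verify case by case that each such formula still has the structural form ``nondecreasing concave $L$- and $Q^{(b)}$-terms, minus a positive multiple of a single $Q^{(a)}$-term,'' which is exactly what the concavity argument in Lemma \ref{lem:convex} and the monotonicity argument above need; once this is checked, the three cases go through uniformly.
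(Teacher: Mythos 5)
Your argument is correct and follows essentially the same route as the paper: propagate nonnegativity across each gap in the support of $\nu^{(a)}$ via the concavity of Lemma \ref{lem:convex}, allowing the left endpoint $k=0$ where $p_0^{(a)}=0$. The only divergence is the tail $i>\nu^{(a)}_1$, which the paper handles by applying Lemma \ref{lem:convex} with $l\gg\nu^{(a)}_1$ while you use the (equally valid, and arguably more self-contained) observation that $Q^{(a)}$ has saturated there so $p^{(a)}_i$ is nondecreasing beyond the largest part.
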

\begin{proof}
If $i\leq \nu^{(a)}_1$ we obtain the result
by Lemma \ref{lem:convex}. (Note that $k$ can be $0$.)
On the contrary, assume $\nu^{(a)}_1<i$.
Then apply Lemma \ref{lem:convex} with $k=\nu^{(a)}_1,l\gg\nu^{(a)}_1$.
\end{proof}

\subsection{Stability} \label{subsec:stability}
In this subsection we investigate the behavior of a rigged configuration when $n$ is large.
We define an integer $\gamma$ by $\gamma=2$ for $\geh=A^{(2)}_{2n},D^{(2)}_{n+1}$, $\gamma=1$ otherwise.

\begin{lemma} \label{lem:stability}
Let $(\nu^\bullet,J^\bullet)\in\mathrm{RC}(\la,\El)$ and 
$k=\max\{a\mid\langle\la,h_a\rangle\ne0\text{ or }L_i^{(a)}\ne0\text{ for some }i\}$.
Then we have
\begin{align*}
Q_{\max}^{(k)}=Q_{\max}^{(k+1)}&=\cdots=Q_{\max}^{(n)}=0
\text{ for }\geh=A_n^{(1)},\\
Q_{\max}^{(k)}=Q_{\max}^{(k+1)}&=\cdots=Q_{\max}^{(n-1)}=2Q_{\max}^{(n)}
\text{ for }\geh=A_{2n-1}^{(2)},B_n^{(1)},\\
Q_{\max}^{(k)}=Q_{\max}^{(k+1)}&=\cdots=2Q_{\max}^{(n-1)}=2Q_{\max}^{(n)}
\text{ for }\geh=D_n^{(1)},\\
Q_{\max}^{(k)}=Q_{\max}^{(k+1)}&=\cdots=Q_{\max}^{(n-1)}=Q_{\max}^{(n)}
\text{ otherwise}.
\end{align*}
\end{lemma}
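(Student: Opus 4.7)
My plan is to read off each $Q_{\max}^{(a)} = \upsilon_a \sum_i i\,m_i^{(a)}$ directly from the weight equation \eqref{num of box}, which pins down the total area of each $\nu^{(a)}$ in terms of the data $(\la,\El)$. The key reduction is that for $a \ge k$ the right-hand side of \eqref{num of box} sums only over $b \le k$, so $a \ge b$ throughout and the fundamental-weight pairings $(\Lat_a | \Lat_b)$ of $\gehz$ simplify: in the standard orthogonal realization of each $\gehz$, for $a\ge b$ they reduce to $b\cdot(e|e)$ when both $a,b$ are non-spin, and carry an extra factor of $1/2$ whenever exactly one of $a,b$ is a spin index.

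Substituting these pairings into \eqref{num of box} and carefully tracking $(\alt_a|\alt_a)$, $\upsilon_a$, $\epsilon_a$ type by type, one finds that for $a$ in the ``bulk'' the value comes out to $Q_{\max}^{(a)} = |\El| - |\la|$. The residual factor of $1/2$ at the spin boundary produces exactly the factor-of-$2$ relations asserted at $a=n$ for $\geh = B_n^{(1)}, A_{2n-1}^{(2)}$ and at $a \in \{n-1,n\}$ for $\geh = D_n^{(1)}$. For $\geh = C_n^{(1)}, A_{2n}^{(2)}, D_{n+1}^{(2)}$ the spin $1/2$ is precisely cancelled either by $\upsilon_n = 2$, by $\epsilon_n = 2$, or by the jump in $(\alt_n|\alt_n)$, so $Q_{\max}^{(n)}$ still equals the bulk value and all the $Q_{\max}^{(a)}$ for $a\ge k$ coincide.

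The case $\geh = A_n^{(1)}$ is the only one where the large-rank hypothesis of the subsection genuinely enters. Here $(\Lat_a | \Lat_b) = b(n+1-a)/(n+1)$ carries real $a$-dependence and the same calculation gives $Q_{\max}^{(a)} = \tfrac{n+1-a}{n+1}(|\El| - |\la|)$. Evaluating at $a=n$ shows $(n+1) \mid (|\El| - |\la|)$ by integrality of $\sum_i i\,m_i^{(n)}$, which is forced by $(\nu^\bullet,J^\bullet)\in\mathrm{RC}(\la,\El)$; combined with $|\El| - |\la| < n+1$ in the large-rank regime, this forces $|\El| = |\la|$ and hence $Q_{\max}^{(a)} = 0$ for all $a \ge k$. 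The main obstacle I anticipate is purely bookkeeping across the seven nonexceptional types---juggling the normalizations $(\alt_a|\alt_a)$, $\upsilon_a$, $\epsilon_a$ and the spin/non-spin dichotomy of the $\Lat_b$---but once \eqref{num of box} is unfolded case by case the asserted identities are immediate.
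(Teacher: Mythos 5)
Your proposal is correct and is essentially the paper's own proof: the paper likewise just observes that the definition of $k$ restricts the sum in \eqref{num of box} to $b\le k$ and then reads off $Q_{\max}^{(a)}$ via \eqref{Qmax} and \eqref{upsilon}, which is exactly your case-by-case unfolding of the pairings $(\Lat_a|\Lat_b)$ for $a\ge b$. Your integrality-plus-large-rank argument for $A_n^{(1)}$ makes explicit a point the paper leaves implicit (the vanishing there genuinely needs the large-rank regime of this subsection), and the only blemishes are cosmetic misattributions of which normalization supplies the factor of $2$ (for $A_{2n-1}^{(2)}$ it comes from the jump in $(\alt_n|\alt_n)$ rather than a spin weight of $C_n$, and $\epsilon_n$ never enters since the sum runs over $b\le k<n$), none of which affects the asserted identities.
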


\begin{proof}
Look at the formula \eqref{num of box}.
The definition of $k$ restricts the summation over $b\in I_0$ to $1\le b\le k$. Recalling
\eqref{Qmax} with \eqref{upsilon} we obtain the desired result.
\end{proof}

\begin{lemma} \label{lem:width}
Let $k$ be as in Lemma \ref{lem:stability}.
Then we have $\nu^{(k+1)}_1=\nu^{(k+2)}_1=\cdots =\nu^{(n)}_1$.
In particular, the longest rows of $\nu^{(k+1)}, \nu^{(k+2)},
\ldots, \nu^{(n)}$ are singular.
\end{lemma}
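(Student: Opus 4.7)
The plan is to deduce both assertions from the vanishing of the vacancy numbers past the tip of each configuration. Fix $(\nu^\bullet,J^\bullet)\in\mathrm{RC}(\la,\El)$. Since $L_i^{(a)}=0$ for all $a>k$ and $i\in\Z_{>0}$, the formula \eqref{vacancy} and its boundary variants at $a=n-1,n$ express $p_i^{(a)}$ as a linear combination of the $Q_i^{(b)}$ in which $Q_i^{(a)}$ itself enters with a strictly negative coefficient and every other $Q_i^{(b)}$ with a non-negative coefficient. The case $\geh=A_n^{(1)}$ is trivial because Lemma \ref{lem:stability} forces $\nu^{(a)}$ to be empty for $a\ge k+1$, so henceforth assume $\geh$ is not of type $A$.

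The first step is to verify, case by case over the six remaining nonexceptional types, that $p_\infty^{(a)}:=\lim_{i\to\infty}p_i^{(a)}=0$ for every $a\ge k+1$. This is a direct substitution: the linear relation among $Q_{\max}^{(k)},\dots,Q_{\max}^{(n)}$ provided by Lemma \ref{lem:stability} is precisely what is required to kill the large-$i$ limit in each vacancy formula. Next, a monotonicity observation: once $i$ exceeds the longest row of $\nu^{(a)}$, the term $Q_i^{(a)}$ has saturated at $Q_{\max}^{(a)}$, while the neighboring $Q_i^{(b)}$ appearing in the formula for $p_i^{(a)}$ are still non-decreasing and enter with non-negative coefficients, so $p_i^{(a)}$ itself is non-decreasing on that range. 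Combined with $p_i^{(a)}\ge 0$ from Corollary \ref{cor:positivity} and $p_\infty^{(a)}=0$, this forces $p_i^{(a)}=0$ for every $i\ge\nu_1^{(a)}$. In particular $p_{\nu_1^{(a)}}^{(a)}=0$, so the rigging on any row of length $\nu_1^{(a)}$ lies in $[0,0]$ and the row is singular, which settles the second assertion.

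For the chain of equalities $\nu_1^{(k+1)}=\cdots=\nu_1^{(n)}$, the plan is to exploit the stronger identity $p_i^{(a)}\equiv 0$ on $i\ge\nu_1^{(a)}$: each non-negative-coefficient $Q_i^{(b)}$ in the formula for $p_i^{(a)}$ must then itself be stationary on that range, so $\nu^{(b)}$ has no rows longer than $\nu_1^{(a)}$. Applying this inequality both at $a$ and at $a+1$ sandwiches $\nu_1^{(a)}=\nu_1^{(a+1)}$, and iterating produces the full chain. The main obstacle is the right end of the Dynkin diagram: the boundary vacancy formulas for $p_i^{(n-1)}$ and $p_i^{(n)}$ vary by type (doubled coefficients in $B_n^{(1)}, D_n^{(1)}, A_{2n-1}^{(2)}$, rescaled indices $Q_{i/2}$ in $B_n^{(1)}$ and $Q_{2i}$ in $C_n^{(1)}$, and the twist $\epsilon_n=2$ in $A_{2n}^{(2)}$), so Lemma \ref{lem:stability} has to be matched with the correct formula type by type to secure the equality $\nu_1^{(n-1)}=\nu_1^{(n)}$ at the end of the chain.
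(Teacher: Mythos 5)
Your proposal is correct and rests on exactly the same two ingredients as the paper's proof: the $Q_{\max}$ identities supplied by Lemma \ref{lem:stability} and the non-negativity \eqref{ppos} of the vacancy numbers, fed into the type-dependent formulas for $p_i^{(a)}$. The only difference is organizational: you first isolate the stronger intermediate fact that $p_i^{(a)}=0$ for all $i\ge \nu_1^{(a)}$ and $a\ge k+1$ (monotonicity toward the vanishing limit $p_\infty^{(a)}=0$) and then read off both the singularity and, via forced stationarity of the neighboring $Q_i^{(b)}$, the chain of equalities, whereas the paper runs the equivalent computation as a pairwise contradiction (a strict inequality between adjacent $\nu_1^{(a)}$ would force some $p^{(a)}_{\nu_1^{(a)}}<0$) and deduces singularity afterwards -- and both arguments relegate the non-simply-laced boundary formulas to a ``similar'' case-by-case check.
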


\begin{proof}
We give the proof for $\geh=D_n^{(1)}$.
Proofs for the other cases are similar.
To begin with note that 
$Q_{\max}^{(k)}=Q_{\max}^{(k+1)}=\cdots=Q_{\max}^{(n-2)}=2Q_{\max}^{(n-1)}=2Q_{\max}^{(n)}$
from Lemma \ref{lem:stability}.
Assume that $\nu^{(n-2)}_1>\nu^{(n)}_1$.
Then we have
$p^{(n)}_{\nu^{(n)}_1}=Q^{(n-2)}_{\nu^{(n)}_1}-2Q^{(n)}_{\nu^{(n)}_1}
<Q^{(n-2)}_{\nu^{(n-2)}_1}-2Q^{(n)}_{\nu^{(n)}_1}=0$,
which is a contradiction.
Therefore it has to be $\nu^{(n-2)}_1\leq \nu^{(n)}_1$.
Similarly we have $\nu^{(n-2)}_1\leq \nu^{(n-1)}_1$.
Suppose that one of the inequalities is strict, say
$\nu^{(n-2)}_1<\nu^{(n)}_1$.
Then we have
$p^{(n-2)}_{\nu^{(n-2)}_1}=
Q^{(n-3)}_{\nu^{(n-2)}_1}-2Q^{(n-2)}_{\nu^{(n-2)}_1}
+Q^{(n-1)}_{\nu^{(n-2)}_1}+Q^{(n)}_{\nu^{(n-2)}_1}<
Q^{(n-3)}_{\nu^{(n-2)}_1}-2Q^{(n-2)}_{\nu^{(n-2)}_1}
+Q^{(n-1)}_{\nu^{(n-1)}_1}+Q^{(n)}_{\nu^{(n)}_1}\leq 0$
(note that $Q^{(n-3)}_{\nu^{(n-2)}_1}\leq
Q^{(n-3)}_{\nu^{(n-3)}_1}$),
which is a contradiction.
Hence, we have
$\nu^{(n-2)}_1=\nu^{(n-1)}_1=\nu^{(n)}_1$.
We can recursively do the same argument up to $\nu^{(k+1)}$
and finally we obtain
$\nu^{(k)}_1\leq \nu^{(k+1)}_1=\nu^{(k+2)}_1=\cdots =\nu^{(n)}_1$.
The statement about singularity also follows from this estimate.
\end{proof}

\begin{example}
The above lemma assures the singularity for the longest rows.
If the row is not longest, there could be non-singular rows.
For example, for type $D^{(1)}_6$, there is the
following example
($\la=(3),L_1^{(1)}=15,L_i^{(a)}=0$ for other $(a,i)$, i.e., $k=1$).
\begin{center}
\unitlength 10pt
\begin{picture}(33,11)
\Yboxdim{10pt}
\put(0,0.1){\yng(3,1,1,1,1,1,1,1,1,1)}
\put(-0.7,4.1){0}
\put(-0.7,9.0){3}
\put(3.2,9.0){2}
\multiput(1.2,0.1)(0,0.99){9}{0}
\put(6,5){\yng(3,3,3,2,1)}
\put(5.3,5.1){4}
\put(5.3,6.1){1}
\put(5.3,8){0}
\put(7.2,5.1){2}
\put(8.2,6.1){0}
\multiput(9.2,7.1)(0,1){3}{0}
\put(12,6){\yng(3,3,3,3)}
\put(11.3,7.7){0}
\multiput(15.3,6)(0,1){4}{0}
\put(18,6){\yng(3,3,3,3)}
\put(17.3,7.7){0}
\multiput(21.3,6)(0,1){4}{0}
\put(24,8){\yng(3,3)}
\put(23.3,8.5){0}
\multiput(27.3,8)(0,1){2}{0}
\put(30,8){\yng(3,3)}
\put(29.3,8.5){0}
\multiput(33.3,8)(0,1){2}{0}
\end{picture}
\end{center}
The length 3 rows of $\nu^{(a)}$ ($2\leq a$) are singular.
For the reader's convenience, we record here
the corresponding tensor product:
$\Yvcentermath1
\Yboxdim 12pt
\newcommand{\bone}{\bar{1}}
\newcommand{\btwo}{\bar{2}}
\newcommand{\bthree}{\bar{3}}
\young(1)\otimes\young(2)\otimes\young(1)
\otimes\young(2)\otimes\young(3)\otimes
\young(\btwo)\otimes\young(1)\otimes\young(\bthree)
\otimes\young(\btwo)\otimes
\young(1)\otimes\young(2)\otimes
\young(\btwo)\otimes\young(\bone)\otimes
\young(2)\otimes\young(\btwo)\in (B^{1,1})^{\otimes 15}$.
\end{example}

\begin{prop} \label{prop:stability}
Let $(\nu^\bullet,J^\bullet)\in\mathrm{RC}(\la,\El)$ and $k$ as in Lemma \ref{lem:stability}.
Then there exists a partition $\nu^*=(1^{m_1^*},2^{m_2^*},\ldots)$ tiled by $\dia$
such that by setting $l^*=k+\ell(\nu^*)$
\begin{itemize}
\item[(1)] $\nu^{(a)}=\nu^*$ for $a\ge l^*$
	except when $a=n$ for $\geh=A_{2n-1}^{(2)},B_n^{(1)}$ and $a=n-1,n$ for $\geh=D_n^{(1)}$,
	in which case $\nu^{(a)}$ is given by halving each column of $\nu^*$,
\item[(2)] $p_i^{(a)}=0$ for any $i$ and $a>l^*$,
\item[(3)] $c(\nu)=\frac{\gamma}2\sum_{a,b\le l^* \atop j,k\in\Z_{>0}}
	C_{ab}\min(j,k)m_j^{(a)}m_k^{(b)}
	-\gamma\sum_{a\le l^* \atop j,k\in\Z_{>0}}\min(j,k)L_j^{(a)}m_k^{(a)}$ where 
	$C_{ab}=(2-\delta_{al^*})\delta_{ab}-\delta_{a,b-1}-\delta_{a,b+1}$.
\end{itemize}
\end{prop}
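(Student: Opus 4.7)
The plan is to establish, in order, (1) existence of the stable partition $\nu^*$, (2) its $\dia$-tileability, (3) vanishing of vacancy numbers above $l^*$, and (4) the simplified charge formula. Steps (1)--(3) are structural and reduce to iterating Lemmas \ref{lem:stability}--\ref{lem:width}, while (4) is the principal computational step.

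For (1), I would extend Lemma \ref{lem:width} from the longest rows to all rows by descending induction on row length. Lemma \ref{lem:width} matches $\nu_1^{(k+1)} = \cdots = \nu_1^{(n)}$ and, as its proof shows, yields $p^{(a)}_{\nu^{(a)}_1} = 0$; applying the same style of argument (contradicting negativity of a vacancy via a mismatched $Q$-value) to the next distinct row length, and using Lemma \ref{lem:convex} to control the vacancies in the intervals between consecutive distinct row lengths, forces those rows to match as well. Iterating down through all distinct row lengths produces the common stable shape $\nu^*$. Set $l^* = k + \ell(\nu^*)$; then $\nu^{(a)} = \nu^*$ for $a \geq l^*$, except at the boundary indices in (1), where the relation $Q^{(n)}_{\max} = \tfrac12 Q^{(n-1)}_{\max}$ from Lemma \ref{lem:stability} (together with $Q^{(n-1)}_{\max} = Q^{(n)}_{\max} = \tfrac12 Q^{(n-2)}_{\max}$ in type $D_n^{(1)}$) forces $\nu^{(n)}$, and also $\nu^{(n-1)}$ in the $D_n^{(1)}$ case, to be the column-halving of $\nu^*$.

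Claims (2) and (3) then follow by direct substitution. For $\dia = \vdom$, halving requires every column of $\nu^*$ to be of even length, which is the defining condition of $\vdom$-tileability; for $\dia = \hdom$ in $C_n^{(1)}$, the box width $\upsilon_n = 2$ at $a = n$ together with $\nu^{(n)} = \nu^*$ forces every row length of $\nu^*$ to be even; for $\dia \in \{\varnothing, \cell\}$ the condition is vacuous. For (3), if $a > l^*$ lies in the interior of the stable region, all three of $\nu^{(a-1)}, \nu^{(a)}, \nu^{(a+1)}$ equal $\nu^*$ and $L_i^{(a)} = 0$, so the bulk formula $Q_i^{(a-1)} - 2Q_i^{(a)} + Q_i^{(a+1)}$ vanishes identically; at the halving boundary, each variant of \eqref{vacancy} (e.g.\ $Q_i^{(n-2)} - 2Q_i^{(n)}$ in $D_n^{(1)}$) is arranged so that the factor of $2$ on $Q^{(n)}_i$ cancels the halving against $Q^{(n-2)}_i$.

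Step (4) is the principal obstacle. I would substitute the stabilized form into \eqref{c} and split the double sum at $l^*$. Terms with $a, b \leq l^*$ reproduce the main contribution once the normalizations $(\alt_a | \alt_b)$ and the $t_a, t_a^\vee$ factors are combined into $\gamma$ times the $A$-type Cartan entries of $C_{ab}$. Terms with $a, b > l^*$ have $m_i^{(a)}$ constant (or halved at the boundary) in $a$; using the tridiagonal structure of the Cartan matrix of $\gehz$, these sums telescope and leave only boundary pieces, one at $a = l^*$ contributing the $-\delta_{al^*}$ diagonal correction and one at the halving end that combines with the factors $\epsilon_a$ from \eqref{iota}, $t^\vee_a$, and $\upsilon_a$ to give the overall $\gamma$ normalization. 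The mixed terms $a \leq l^* < b$ and their reflection yield the $-\delta_{a, b \pm 1}$ off-diagonal coupling at $a = l^*$, and the second sum $-\sum_a t_a^\vee \min(j,k) L_j^{(a)} m_k^{(a)}$ truncates to $a \leq k \leq l^*$ since $L_j^{(a)} = 0$ for $a > k$. The hard part will be the case-by-case bookkeeping across the seven nonexceptional types, each with its own boundary vacancy formula and pairing normalization; verifying that all of them collapse to the same clean $\gamma C_{ab}$ structure requires careful tracking of $\epsilon_a, t_a^\vee, \upsilon_a$ in the halved regime.
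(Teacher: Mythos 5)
Your overall strategy coincides with the paper's: parts (1)--(2) rest on Lemmas \ref{lem:stability}, \ref{lem:width} and \ref{lem:convex} together with positivity of the vacancy numbers, and part (3) is deferred to a case-by-case substitution, exactly as the paper does (it too writes out only one type for (1) and dismisses (3) with ``case-by-case checking''). The one organizational difference is that you propagate stability row by row, by descending induction on row length, whereas the paper works column by column from the rightmost column of $\nu^{(a)}$ inward; either organization can be made to work.

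The genuine gap is the threshold $l^*=k+\ell(\nu^*)$. Your induction, as sketched, shows only that the shapes $\nu^{(a)}$ agree for $a$ sufficiently large; the proposition asserts they already agree for $a\ge k+\ell(\nu^*)$, and you simply set $l^*=k+\ell(\nu^*)$ and declare stability there without producing that bound. The paper's mechanism is a counting argument: writing $w$ for the common longest row length and $h_a$ for the height of the rightmost column of $\nu^{(a)}$, the inequalities $p^{(a)}_{w-1}\ge 0$ give $h_{a-1}-2h_a+h_{a+1}\le 0$, so a single strict inequality $h_{a-1}<h_a$ propagates downward into a chain $h_{k+1}<h_{k+2}<\cdots<h_m=\cdots=h_n$; since the $h_a$ are positive integers, the chain has length at most $h_n$, so the rightmost column stabilizes within $h_n$ indices of $k$, and iterating over columns yields stabilization by $k+\ell(\nu^{(n)})$. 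The row-based counterpart that would rescue your version is exactly the content of Lemma \ref{lem:large_rank_RC}: the multiplicities $m_i^{(a)}$ strictly increase over the unstable range of $a$, and since they start nonnegative that range has length at most $m_i^*$, whence $\sum_i m_i^*=\ell(\nu^*)$. Without some such counting the later quantitative uses of $l^*$, e.g.\ the rank condition \eqref{eq:condition_on_rank}, are not supported. A second, smaller imprecision: at the halving boundary you invoke only the total-area identities from Lemma \ref{lem:stability}, but equality of total areas does not force $\nu^{(n)}$ to be the column-halving of $\nu^*$; one needs the columnwise identity $Q_i^{(n)}=\tfrac12 Q_i^{(n-2)}$ for all $i$ (in type $D_n^{(1)}$), which again comes from running the stabilization argument with the modified vacancy formula $p_i^{(n)}=Q_i^{(n-2)}-2Q_i^{(n)}$ rather than from $Q_{\max}$ alone.
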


\begin{proof}
(1) For the case $\dia=\varnothing$, that is $\geh=A_n^{(1)}$, $Q_{\max}^{(a)}=0$ for
$a\ge k$ from Lemma \ref{lem:stability}. Hence $\nu^{(a)}=\emptyset$ for $a\ge l^*$. The proof
for the other cases are more or less the same. We only consider the case $\geh=C_n^{(1)}$.

Let $w_a$ be the length of the longest row of $\nu^{(a)}$. Note that $w_n$ is even. 
Lemma \ref{lem:width} shows $w_a=w_{a+1}$ for $a\ge k+1$.
Let us show the length $h_a$ of the rightmost ($w$-th) column of $\nu^{(a)}$ is equal for $a\ge k+h_n$.
Let $m$ be the minimal integer such that $m\ge k+1,h_m=h_{m+1}=\cdots=h_n$. Then $h_{m-1}<h_m$ 
since $p_{w-1}^{(m)}=Q_{w-1}^{(m-1)}-2Q_{w-1}^{(m)}+Q_{w-1}^{(m+1)}=Q_{w-1}^{(m-1)}-Q_{w-1}^{(m)}\ge0$,
where we have used $h_m=h_{m+1}$ in the second equality.
{}From $p_{w-1}^{(m-1)}\ge0$ we have $h_{m-2}-2h_{m-1}+h_m\le0$. Hence we get $h_{m-2}<h_{m-1}$.
This argument continues until we get $h_{k+1}<h_{k+2}<\cdots<h_m$. It implies $m-k\le h_n$.
Therefore we have $h_a=h_{a+1}$ for $a\ge k+h_n$.

Let $h'_a$ be the length of the next rightmost ($(w-1)$-th) column of $\nu^{(a)}$. A similar argument
to the previous paragraph shows that $h'_a=h'_{a+1}$ for $a\ge k+h_n+h'_n$ and so on, 
arriving at the conclusion that $\nu^{(a)}=\nu^{(a+1)}$ for $a\ge k+\ell(\nu^{(n)})$.
The fact that $\nu^*$ is tiled by $\hdom$ is a 
consequence from the fact that $\nu^{(n)}$ is a quasipartition with boxes of width 2.

(2) is clear from (1) and the fact that $p_i^{(a)}$ is calculated by $\nu^{(a-1)},\nu^{(a)},\nu^{(a+1)}$.
(3) can be proven using (1) by case-by-case checking.
\end{proof}

{}From Proposition \ref{prop:stability} we see if $n$ is sufficiently large, the fermionic 
formula \eqref{fermi} can be rewritten as
\begin{align*}
M(\la,\El;q) &= \sum_{\nu\in C(\la,\El)} q^{c(\nu)}
\prod_{a\le l^*} \prod_{i\in\Z_{>0}}
\qbin{p_i^{(a)}+m_i^{(a)}}{m_i^{(a)}}_{q^\gamma} \\
&= \sum_{(\nu^\bullet,J^\bullet)\in\mathrm{RC}(\la,\El)} q^{c(\nu^\bullet,J^\bullet)}
\end{align*}
where $c(\nu^\bullet,J^\bullet)=c(\nu)+|J^\bullet|$,
$|J^\bullet|=\gamma\sum_{a\le l^*,i\in\Z_{>0}}|J^{(a,i)}|$ and $l^*$ is as in Proposition 
\ref{prop:stability}. Apparently, it depends only on the symbol $\dia$, but not on the affine algebra
belonging to the same group in Table \ref{tab:diamond}. Hence, from now on we pick up one 
affine algebra $\geh^\dia=A_n^{(1)},D_{n+1}^{(2)},C_n^{(1)},D_n^{(1)}$ for each symbol 
$\dia=\varnothing,\cell,\hdom,\vdom$, and define the set of stable rigged configurations 
$\mathrm{RC}^\dia$ and stable fermionic formula $M^{\dia}(\la,\El;q)$
by using rigged configurations of $\geh^\dia$ for $n$ large. Therefore, we have
\[
M^\dia(\la,\El;q)=\sum_{(\nu^\bullet,J^\bullet)\in\mathrm{RC}^\dia(\la,\El)} q^{c(\nu^\bullet,J^\bullet)}.
\]
\begin{remark}
The above choice of the affine algebra $\geh^\dia$ for each $\dia$ is not mandatory. Namely, the 
construction of our main bijection $\Psi$ in the subsequent section or its properties are 
essentially the same even if we choose a different $\geh^\dia$. Compare this situation with that
in \cite{LOS} where the choice is essential.
\end{remark}

For a stable rigged configuration $(\nu^\bullet,J^\bullet)$ one can calculate from \eqref{num of box}
$\la$ as
\[
\la_a=\sum_{b\ge a,i\in\Z_{>0}}iL_i^{(b)}+|\nu^{(a-1)}|-|\nu^{(a)}|,
\]
where $\la_a$ is the length of the $a$-th row of $\la$ when identified with the Young diagram. For
$(\nu^\bullet,J^\bullet)\in\mathrm{RC}^\dia(\la,\El)$ we denote it by $\wt(\nu^\bullet,J^\bullet)$.
We note that $k$ in Lemma \ref{lem:stability} is equal to $\ell(\wt(\nu^\bullet,J^\bullet))$.

\section{The bijection} \label{sec:bijection}
\subsection{Definitions}
The goal of this subsection is to give definitions
of our main algorithms $\Psi$ and $\tilde{\Psi}$.
Roughly speaking, the algorithms consist of two parts:
the one is box removing or adding procedure on the
rigged configurations, and the other one is to create
a kind of recording tableau $T$ which eventually generates
the LR tableaux.
We will divide the definition according to
this distinction.
During the algorithm, we will use
$(\nu^\bullet,J^\bullet)$ in slightly generalized sense.
More precisely, for the case $\diamondsuit =\vdom$,
we allow the vacancy number for the longest row of $\nu^{(n)}$
to be $-1$ while its rigging is $0$.
Such a peculiar situation happens only if we consider
odd powers of the operators $\delta$ or $\tilde{\delta}$
(see definitions below) for $\diamondsuit =\vdom$, though the final
algorithms $\Psi$ and $\tilde{\Psi}$ always contain
even powers of such operators whenever $\diamondsuit =\vdom$.
In this section we use the symbol $a^\diamondsuit$
where $a^\hdom =a^\sdom =n-1$
and $a^\vdom =n-2$.

\begin{definition}\label{def:delta}
The map $\delta_l$
\[\delta_l :
(\nu^\bullet,J^\bullet)
\longmapsto
\{(\nu'^{\bullet},J'^\bullet),k\},
\]
is defined
by the following algorithm.
Here $l$ is the length of some row of $\nu^{(n)}$.
\begin{enumerate}
\item[(i)] As the initial step, do one of the following:
\begin{enumerate}
\item If $\diamondsuit =\hdom$ or $\sdom$,
choose one of the length $l$ rows
of $\nu^{(n)}$.
\item The case $\diamondsuit =\vdom$.
If $|\nu^{(n-1)}|=|\nu^{(n)}|$, choose
one of the length $l$ rows of $\nu^{(n-1)}$.
On the other hand, if $|\nu^{(n-1)}|<|\nu^{(n)}|$,
choose one of length $l$
rows of $\nu^{(n)}$.
\end{enumerate}
\item[(ii)] 
Choose one of the length $l$ rows of $\nu^{(a^\diamondsuit)}$.
Then choose rows of $\nu^{(a)}$ $(a<a^\diamondsuit)$ recursively as follows.
Suppose that we have chosen a row of $\nu^{(a)}$.
Choose a shortest singular row of $\nu^{(a-1)}$ whose length is equal to
or longer than the chosen row of $\nu^{(a)}$, and continue, if such a row exists;
otherwise (in particular when $a=1$) set $k=a$ and stop.
\item[(iii)]
$\nu'^\bullet$ is obtained by removing one box from
the right end of each chosen row at Step (i) and (ii).
\item[(iv)]
The new riggings $J'^\bullet$ are defined as follows.
For the rows that are not changed in Step (iii),
take the same riggings as before.
Otherwise set the new riggings equal to the corresponding
vacancy numbers computed by using $\nu'^\bullet$.
\end{enumerate}
\end{definition}

\begin{definition}\label{def:Psi}
The map $\Psi$
\[
\Psi : (\nu^\bullet,J^\bullet)\longmapsto
\{(\nu'^\bullet,J'^\bullet),T\}
\]
is defined as follows.
As the initial condition, set $T$ to be the empty skew tableau with both inner 
and outer shape equal to the diagram representing the weight of
$(\nu^\bullet,J^\bullet)$.
Let $h_i$ denote the height of the $i$-th column (counting
from left) of the partition $\nu^{(a^\diamondsuit)}$
and let $l=\nu^{(a^\diamondsuit)}_1$.

\begin{enumerate}
\item[(i)]
We will apply $\delta_{l}$
for $h_{l}$ times.
Each time when we apply $\delta_{l}$,
we recursively redefine $(\nu^\bullet,J^\bullet)$
and $T$ as follows.
Assume that we have done $\delta_{l}^{i-1}$
and obtained $\{(\nu^\bullet,J^\bullet),T\}$.
Let us apply $\delta_{l}$ one more time:
\[
\delta_l :
(\nu^\bullet,J^\bullet)
\longmapsto
\{(\nu'^{\bullet},J'^\bullet),k\},
\]
Using the output, do the following.
Define the new $(\nu^\bullet,J^\bullet)$ to be
$(\nu'^{\bullet},J'^\bullet)$.
Define the new $T$ by putting $i$ on the right of
the $k$-th row of the previous $T$.
\item[(ii)]
Recursively apply
$\delta_{l-1}^{h_{l-1}},
\ldots,\delta_2^{h_2},\delta_1^{h_1}$
by the same procedure as in Step (i).
Then the final outputs
$(\nu'^\bullet,J'^\bullet)$ and $T$
give the image of $\Psi$.
\end{enumerate}
\end{definition}

\begin{example}\label{ex:Psi}
Let us consider the special case of the bijection $\Psi$
where the bijection \cite{OSS,SS} between the rigged configurations
and the tensor products of crystals is also available.
Consider the following element of the tensor product
$(B^{1,3})^{\otimes 3}\otimes
(B^{1,2})^{\otimes 2}\otimes
(B^{1,1})^{\otimes 2}$
of type $D^{(1)}_n$ ($n\geq 8$) crystals:
\[
\newcommand{\bone}{\bar{1}}
\newcommand{\btwo}{\bar{2}}
\Yboxdim 12pt
\Yvcentermath1
p=
\young(111)\otimes\young(2\bone\bone)\otimes
\young(12\btwo)\otimes\young(23)\otimes
\young(2\btwo)\otimes\young(\btwo)\otimes\young(2).
\]
Due to Theorem 8.6 of \cite{SS} all the isomorphic
elements under the combinatorial $R$-matrices correspond
to the same rigged configuration.
Then the map $\Psi$ for the $D^{(1)}_8$ case proceeds as follows.
We remark that one can slightly modify the definition of $\Psi$
so that the following computation can be done in $D^{(1)}_7$.
In the following diagrams,
the first rigged configuration corresponds to the above $p$.
Here, we put the vacancy numbers (resp. riggings)
on the left (resp. right) of the corresponding rows.
The gray boxes represent the boxes to be removed by
each $\delta$ indicated on the left of each arrow.
The corresponding recording tableau $T$ is given
on the right of each arrow.
\begin{center}
\unitlength 10pt
\begin{picture}(40,5)(2,0)
\put(14.9,2.9){\gnode}
\put(19.9,2.9){\gnode}
\put(24.9,2.9){\gnode}
\put(29.9,2.9){\gnode}
\put(34.9,3.95){\gnode}
\Yboxdim{10pt}
\put(1,0){\yng(4,3,2,2,2)}
\put(0.2,1.1){0}
\put(0.2,3){1}
\put(0.2,4){0}
\put(3.2,0){0}
\put(3.2,1){0}
\put(3.2,2){0}
\put(4.2,3){0}
\put(5.2,4){0}
\put(7,1){\yng(4,3,2,2)}
\put(6.2,1.6){2}
\put(6.2,3){2}
\put(6.2,4){1}
\put(9.2,1){1}
\put(9.2,2){1}
\put(10.2,3){0}
\put(11.2,4){0}
\put(13,1){\yng(3,3,2,2)}
\put(12.2,1.6){0}
\put(12.2,3.6){0}
\put(15.2,1){0}
\put(15.2,2){0}
\put(16.2,3){0}
\put(16.2,4){0}
\put(18,1){\yng(3,3,2,2)}
\put(17.2,1.6){0}
\put(17.2,3.6){0}
\put(20.2,1){0}
\put(20.2,2){0}
\put(21.2,3){0}
\put(21.2,4){0}
\put(23,1){\yng(3,3,2,2)}
\put(22.2,1.6){0}
\put(22.2,3.6){0}
\put(25.2,1){0}
\put(25.2,2){0}
\put(26.2,3){0}
\put(26.2,4){0}
\put(28,1){\yng(3,3,2,2)}
\put(27.2,1.6){0}
\put(27.2,3.6){0}
\put(30.2,1){0}
\put(30.2,2){0}
\put(31.2,3){0}
\put(31.2,4){0}
\put(33,3){\yng(3,2)}
\put(32.2,3){0}
\put(32.2,4){0}
\put(35.2,3.1){0}
\put(36.2,4.1){0}
\put(38,3){\yng(3,2)}
\put(37.2,3.1){0}
\put(37.2,4.1){0}
\put(40.2,3.1){0}
\put(41.2,4.1){0}
\end{picture}
\end{center}

\begin{center}
\unitlength 10pt
\begin{picture}(3,4)
\newcommand{\kuu}{{}}
\put(-0.5,1.5){$\delta_3$}
\put(2,0){\young(\kuu\kuu,\kuu\kuu,\kuu1)}
\put(1,4){\vector(0,-1){5}}
\end{picture}
\end{center}

\begin{center}
\unitlength 10pt
\begin{picture}(40,5)(2,0)
\put(19.9,3.9){\gnode}
\put(24.9,3.9){\gnode}
\put(29.9,3.9){\gnode}
\put(39.9,3.95){\gnode}
\Yboxdim{10pt}
\put(1,0){\yng(4,3,2,2,2)}
\put(0.2,1.1){0}
\put(0.2,3){1}
\put(0.2,4){0}
\put(3.2,0){0}
\put(3.2,1){0}
\put(3.2,2){0}
\put(4.2,3){0}
\put(5.2,4){0}
\put(7,1){\yng(4,3,2,2)}
\put(6.2,1.6){2}
\put(6.2,3){1}
\put(6.2,4){0}
\put(9.2,1){1}
\put(9.2,2){1}
\put(10.2,3){0}
\put(11.2,4){0}
\put(13,1){\yng(3,2,2,2)}
\put(12.2,2.1){0}
\put(12.2,4){1}
\put(15.2,1){0}
\put(15.2,2){0}
\put(15.2,3){0}
\put(16.2,4){0}
\put(18,1){\yng(3,2,2,2)}
\put(17.2,2.1){0}
\put(17.2,4){0}
\put(20.2,1){0}
\put(20.2,2){0}
\put(20.2,3){0}
\put(21.2,4){0}
\put(23,1){\yng(3,2,2,2)}
\put(22.2,2.1){0}
\put(22.2,4){0}
\put(25.2,1){0}
\put(25.2,2){0}
\put(25.2,3){0}
\put(26.2,4){0}
\put(28,1){\yng(3,2,2,2)}
\put(27.2,2.1){0}
\put(27.2,4){0}
\put(30.2,1){0}
\put(30.2,2){0}
\put(30.2,3){0}
\put(31.2,4){0}
\put(33,3){\yng(2,2)}
\put(32.2,3.6){0}
\put(35.2,3.1){0}
\put(35.2,4.1){0}
\put(38,3){\yng(3,2)}
\put(37.2,3.1){0}
\put(36.6,4.1){$-1$}
\put(40.2,3.1){0}
\put(41.2,4.1){0}
\end{picture}
\end{center}

\begin{center}
\unitlength 10pt
\begin{picture}(3,5)
\newcommand{\kuu}{{}}
\put(-0.5,2){$\delta_3$}
\put(2,0){\young(\kuu\kuu,\kuu\kuu,\kuu1,2)}
\put(1,5){\vector(0,-1){6}}
\end{picture}
\end{center}

\begin{center}
\unitlength 10pt
\begin{picture}(40,5)(2,0)
\put(3.9,3.85){\gnode}
\put(9.9,3.9){\gnode}
\put(13.95,0.95){\gnode}
\put(18.95,0.96){\gnode}
\put(23.95,0.96){\gnode}
\put(28.95,0.96){\gnode}
\put(33.95,2.96){\gnode}
\Yboxdim{10pt}
\put(1,0){\yng(4,3,2,2,2)}
\put(0.2,1.1){0}
\put(0.2,3){1}
\put(0.2,4){0}
\put(3.2,0){0}
\put(3.2,1){0}
\put(3.2,2){0}
\put(4.2,3){0}
\put(5.2,4){0}
\put(7,1){\yng(4,3,2,2)}
\put(6.2,1.6){2}
\put(6.2,3){1}
\put(6.2,4){0}
\put(9.2,1){1}
\put(9.2,2){1}
\put(10.2,3){0}
\put(11.2,4){0}
\put(13,1){\yng(3,2,2,2)}
\put(12.2,2.1){0}
\put(12.2,4){0}
\put(15.2,1){0}
\put(15.2,2){0}
\put(15.2,3){0}
\put(16.2,4){0}
\put(18,1){\yng(2,2,2,2)}
\put(17.2,2.6){0}
\put(20.2,1){0}
\put(20.2,2){0}
\put(20.2,3){0}
\put(20.2,4){0}
\put(23,1){\yng(2,2,2,2)}
\put(22.2,2.6){0}
\put(25.2,1){0}
\put(25.2,2){0}
\put(25.2,3){0}
\put(25.2,4){0}
\put(28,1){\yng(2,2,2,2)}
\put(27.2,2.6){0}
\put(30.2,1){0}
\put(30.2,2){0}
\put(30.2,3){0}
\put(30.2,4){0}
\put(33,3){\yng(2,2)}
\put(32.2,3.6){0}
\put(35.2,3.1){0}
\put(35.2,4.1){0}
\put(38,3){\yng(2,2)}
\put(37.2,3.6){0}
\put(40.2,3.1){0}
\put(40.2,4.1){0}
\end{picture}
\end{center}

\begin{center}
\unitlength 10pt
\begin{picture}(3,5)
\newcommand{\kuu}{{}}
\put(-0.5,2.0){$\delta_2$}
\put(2,0){\young(\kuu\kuu1,\kuu\kuu,\kuu1,2)}
\put(1,5){\vector(0,-1){6}}
\end{picture}
\end{center}

\begin{center}
\unitlength 10pt
\begin{picture}(40,5)(2,0)
\put(18.95,1.95){\gnode}
\put(23.95,1.95){\gnode}
\put(28.95,1.95){\gnode}
\put(38.95,2.97){\gnode}
\Yboxdim{10pt}
\put(1,0){\yng(3,3,2,2,2)}
\put(0.2,1.1){0}
\put(0.2,3.6){1}
\put(3.2,0){0}
\put(3.2,1){0}
\put(3.2,2){0}
\put(4.2,3){0}
\put(4.2,4){1}
\put(7,1){\yng(3,3,2,2)}
\put(6.2,1.6){1}
\put(6.2,3.6){0}
\put(9.2,1){1}
\put(9.2,2){1}
\put(10.2,3){0}
\put(10.2,4){0}
\put(13,1){\yng(3,2,2,1)}
\put(12.2,1.1){0}
\put(12.2,2.6){1}
\put(12.2,4){1}
\put(14.2,1.1){0}
\put(15.2,2){0}
\put(15.2,3){0}
\put(16.2,4){0}
\put(18,1){\yng(2,2,2,1)}
\put(17.2,1.1){0}
\put(17.2,3){0}
\put(19.2,1.1){0}
\put(20.2,2){0}
\put(20.2,3){0}
\put(20.2,4){0}
\put(23,1){\yng(2,2,2,1)}
\put(22.2,1.1){0}
\put(22.2,3){0}
\put(24.2,1.1){0}
\put(25.2,2){0}
\put(25.2,3){0}
\put(25.2,4){0}
\put(28,1){\yng(2,2,2,1)}
\put(27.2,1.1){0}
\put(27.2,3){0}
\put(29.2,1.1){0}
\put(30.2,2){0}
\put(30.2,3){0}
\put(30.2,4){0}
\put(33,3){\yng(2,1)}
\put(32.2,3.1){0}
\put(32.2,4.1){1}
\put(34.2,3.1){0}
\put(35.2,4.1){0}
\put(38,3){\yng(2,2)}
\put(36.6,3.7){$-1$}
\put(40.2,3.1){0}
\put(40.2,4.1){0}
\end{picture}
\end{center}

\begin{center}
\unitlength 10pt
\begin{picture}(3,5)
\newcommand{\kuu}{{}}
\put(-0.5,2.0){$\delta_2$}
\put(2,0){\young(\kuu\kuu1,\kuu\kuu,\kuu1,22)}
\put(1,5){\vector(0,-1){6}}
\end{picture}
\end{center}

\begin{center}
\unitlength 10pt
\begin{picture}(40,5)(2,0)
\put(23.95,2.92){\gnode}
\put(28.95,2.92){\gnode}
\put(33.95,3.95){\gnode}
\Yboxdim{10pt}
\put(1,0){\yng(3,3,2,2,2)}
\put(0.2,1.1){0}
\put(0.2,3.6){1}
\put(3.2,0){0}
\put(3.2,1){0}
\put(3.2,2){0}
\put(4.2,3){0}
\put(4.2,4){1}
\put(7,1){\yng(3,3,2,2)}
\put(6.2,1.6){1}
\put(6.2,3.6){0}
\put(9.2,1){1}
\put(9.2,2){1}
\put(10.2,3){0}
\put(10.2,4){0}
\put(13,1){\yng(3,2,2,1)}
\put(12.2,1.1){0}
\put(12.2,2.6){0}
\put(12.2,4){0}
\put(14.2,1.1){0}
\put(15.2,2){0}
\put(15.2,3){0}
\put(16.2,4){0}
\put(18,1){\yng(2,2,1,1)}
\put(17.2,1.6){0}
\put(17.2,3.6){1}
\put(19.2,1.1){0}
\put(19.2,2){0}
\put(20.2,3){0}
\put(20.2,4){0}
\put(23,1){\yng(2,2,1,1)}
\put(22.2,1.6){0}
\put(22.2,3.6){0}
\put(24.2,1.1){0}
\put(24.2,2){0}
\put(25.2,3){0}
\put(25.2,4){0}
\put(28,1){\yng(2,2,1,1)}
\put(27.2,1.6){0}
\put(27.2,3.6){0}
\put(29.2,1.1){0}
\put(29.2,2){0}
\put(30.2,3){0}
\put(30.2,4){0}
\put(33,3){\yng(2,1)}
\put(32.2,3.1){0}
\put(32.2,4.1){0}
\put(34.2,3.1){0}
\put(35.2,4.1){0}
\put(38,3){\yng(2,1)}
\put(37.2,3.1){0}
\put(37.2,4.1){0}
\put(39.2,3.1){0}
\put(40.2,4.1){0}
\end{picture}
\end{center}

\begin{center}
\unitlength 10pt
\begin{picture}(3,6)
\newcommand{\kuu}{{}}
\put(-0.5,2.5){$\delta_2$}
\put(2,0){\young(\kuu\kuu1,\kuu\kuu,\kuu1,22,3)}
\put(1,6){\vector(0,-1){7}}
\end{picture}
\end{center}

\begin{center}
\unitlength 10pt
\begin{picture}(40,5)(2,0)
\put(28.95,3.9){\gnode}
\put(38.95,3.95){\gnode}
\Yboxdim{10pt}
\put(1,0){\yng(3,3,2,2,2)}
\put(0.2,1.1){0}
\put(0.2,3.6){1}
\put(3.2,0){0}
\put(3.2,1){0}
\put(3.2,2){0}
\put(4.2,3){0}
\put(4.2,4){1}
\put(7,1){\yng(3,3,2,2)}
\put(6.2,1.6){1}
\put(6.2,3.6){0}
\put(9.2,1){1}
\put(9.2,2){1}
\put(10.2,3){0}
\put(10.2,4){0}
\put(13,1){\yng(3,2,2,1)}
\put(12.2,1.1){0}
\put(12.2,2.6){0}
\put(12.2,4){0}
\put(14.2,1.1){0}
\put(15.2,2){0}
\put(15.2,3){0}
\put(16.2,4){0}
\put(18,1){\yng(2,2,1,1)}
\put(17.2,1.6){0}
\put(17.2,3.6){0}
\put(19.2,1.1){0}
\put(19.2,2){0}
\put(20.2,3){0}
\put(20.2,4){0}
\put(23,1){\yng(2,1,1,1)}
\put(22.2,2.1){0}
\put(22.2,4.0){1}
\put(24.2,1.1){0}
\put(24.2,2){0}
\put(24.2,3){0}
\put(25.2,4){0}
\put(28,1){\yng(2,1,1,1)}
\put(27.2,2.1){0}
\put(27.2,4.1){0}
\put(29.2,1.1){0}
\put(29.2,2){0}
\put(29.2,3){0}
\put(30.2,4){0}
\put(33,3){\yng(1,1)}
\put(32.2,3.6){0}
\put(34.2,3.1){0}
\put(34.2,4.1){0}
\put(38,3){\yng(2,1)}
\put(37.2,3.1){0}
\put(36.6,4.1){$-1$}
\put(39.2,3.1){0}
\put(40.2,4.1){0}
\end{picture}
\end{center}

\begin{center}
\unitlength 10pt
\begin{picture}(3,7)
\newcommand{\kuu}{{}}
\put(-0.5,3){$\delta_2$}
\put(2,0){\young(\kuu\kuu1,\kuu\kuu,\kuu1,22,3,4)}
\put(1,7){\vector(0,-1){8}}
\end{picture}
\end{center}

\begin{center}
\unitlength 10pt
\begin{picture}(40,5)(2,0)
\put(1.95,-0.02){\gnode}
\put(7.95,0.95){\gnode}
\put(12.95,0.96){\gnode}
\put(17.95,0.95){\gnode}
\put(22.95,0.95){\gnode}
\put(27.95,0.95){\gnode}
\put(32.95,2.96){\gnode}
\Yboxdim{10pt}
\put(1,0){\yng(3,3,2,2,2)}
\put(0.2,1.1){0}
\put(0.2,3.6){1}
\put(3.2,0){0}
\put(3.2,1){0}
\put(3.2,2){0}
\put(4.2,3){0}
\put(4.2,4){1}
\put(7,1){\yng(3,3,2,2)}
\put(6.2,1.6){1}
\put(6.2,3.6){0}
\put(9.2,1){1}
\put(9.2,2){1}
\put(10.2,3){0}
\put(10.2,4){0}
\put(13,1){\yng(3,2,2,1)}
\put(12.2,1.1){0}
\put(12.2,2.6){0}
\put(12.2,4){0}
\put(14.2,1.1){0}
\put(15.2,2){0}
\put(15.2,3){0}
\put(16.2,4){0}
\put(18,1){\yng(2,2,1,1)}
\put(17.2,1.6){0}
\put(17.2,3.6){0}
\put(19.2,1.1){0}
\put(19.2,2){0}
\put(20.2,3){0}
\put(20.2,4){0}
\put(23,1){\yng(2,1,1,1)}
\put(22.2,2.1){0}
\put(22.2,4.0){0}
\put(24.2,1.1){0}
\put(24.2,2){0}
\put(24.2,3){0}
\put(25.2,4){0}
\put(28,1){\yng(1,1,1,1)}
\put(27.2,2.6){0}
\put(29.2,1.1){0}
\put(29.2,2){0}
\put(29.2,3){0}
\put(29.2,4){0}
\put(33,3){\yng(1,1)}
\put(32.2,3.6){0}
\put(34.2,3.1){0}
\put(34.2,4.1){0}
\put(38,3){\yng(1,1)}
\put(37.2,3.6){0}
\put(39.2,3.1){0}
\put(39.2,4.1){0}
\end{picture}
\end{center}

\begin{center}
\unitlength 10pt
\begin{picture}(3,7)
\newcommand{\kuu}{{}}
\put(-0.5,3){$\delta_1$}
\put(2,0){\young(\kuu\kuu11,\kuu\kuu,\kuu1,22,3,4)}
\put(1,7){\vector(0,-1){8}}
\end{picture}
\end{center}

\begin{center}
\unitlength 10pt
\begin{picture}(40,5)(2,0)
\put(7.95,1.95){\gnode}
\put(13.95,1.96){\gnode}
\put(17.97,1.97){\gnode}
\put(22.97,1.97){\gnode}
\put(27.97,1.97){\gnode}
\put(37.95,2.96){\gnode}
\Yboxdim{10pt}
\put(1,0){\yng(3,3,2,2,1)}
\put(0.2,0.1){1}
\put(0.2,1.6){1}
\put(0.2,3.6){2}
\put(2.2,0.1){1}
\put(3.2,1){0}
\put(3.2,2){0}
\put(4.2,3){0}
\put(4.2,4){1}
\put(7,1){\yng(3,3,2,1)}
\put(6.2,1.1){0}
\put(6.2,2.1){1}
\put(6.2,3.6){0}
\put(8.2,1){0}
\put(9.2,2){1}
\put(10.2,3){0}
\put(10.2,4){0}
\put(13,2){\yng(3,2,2)}
\put(12.2,2.6){0}
\put(12.2,4){0}
\put(15.2,2){0}
\put(15.2,3){0}
\put(16.2,4){0}
\put(18,2){\yng(2,2,1)}
\put(17.2,2.1){0}
\put(17.2,3.6){0}
\put(19.2,2.1){0}
\put(20.2,3){0}
\put(20.2,4){0}
\put(23,2){\yng(2,1,1)}
\put(22.2,2.6){0}
\put(22.2,4.0){0}
\put(24.2,2){0}
\put(24.2,3){0}
\put(25.2,4){0}
\put(28,2){\yng(1,1,1)}
\put(27.2,3.1){0}
\put(29.2,2){0}
\put(29.2,3){0}
\put(29.2,4){0}
\put(33,4){\yng(1)}
\put(32.2,4.1){1}
\put(34.2,4.1){0}
\put(38,3){\yng(1,1)}
\put(36.6,3.7){$-1$}
\put(39.2,3.1){0}
\put(39.2,4.1){0}
\end{picture}
\end{center}

\begin{center}
\unitlength 10pt
\begin{picture}(3,7)
\newcommand{\kuu}{{}}
\put(-0.5,3){$\delta_1$}
\put(2,0){\young(\kuu\kuu11,\kuu\kuu2,\kuu1,22,3,4)}
\put(1,7){\vector(0,-1){8}}
\end{picture}
\end{center}

\begin{center}
\unitlength 10pt
\begin{picture}(40,5)(2,0)
\put(13.95,2.96){\gnode}
\put(18.97,2.97){\gnode}
\put(22.97,2.97){\gnode}
\put(27.97,2.97){\gnode}
\put(32.95,3.96){\gnode}
\Yboxdim{10pt}
\put(1,0){\yng(3,3,2,2,1)}
\put(0.2,0.1){1}
\put(0.2,1.6){0}
\put(0.2,3.6){1}
\put(2.2,0.1){1}
\put(3.2,1){0}
\put(3.2,2){0}
\put(4.2,3){0}
\put(4.2,4){1}
\put(7,1){\yng(3,3,1,1)}
\put(6.2,1.6){0}
\put(6.2,3.6){1}
\put(8.2,1){0}
\put(8.2,2){0}
\put(10.2,3){0}
\put(10.2,4){0}
\put(13,2){\yng(3,2,1)}
\put(12.2,2.1){0}
\put(12.2,3.1){0}
\put(12.2,4.1){0}
\put(14.2,2.1){0}
\put(15.2,3.1){0}
\put(16.2,4.1){0}
\put(18,3){\yng(2,2)}
\put(17.2,3.6){0}
\put(20.2,3){0}
\put(20.2,4){0}
\put(23,3){\yng(2,1)}
\put(22.2,3.1){0}
\put(22.2,4.0){0}
\put(24.2,3.1){0}
\put(25.2,4){0}
\put(28,3){\yng(1,1)}
\put(27.2,3.6){0}
\put(29.2,3){0}
\put(29.2,4){0}
\put(33,4){\yng(1)}
\put(32.2,4.1){0}
\put(34.2,4.1){0}
\put(38,4){\yng(1)}
\put(37.2,4.1){0}
\put(39.2,4.1){0}
\end{picture}
\end{center}

\begin{center}
\unitlength 10pt
\begin{picture}(3,7)
\newcommand{\kuu}{{}}
\put(-0.5,3){$\delta_1$}
\put(2,0){\young(\kuu\kuu11,\kuu\kuu2,\kuu13,22,3,4)}
\put(1,7){\vector(0,-1){8}}
\end{picture}
\end{center}

\begin{center}
\unitlength 10pt
\begin{picture}(40,5)(2,0)
\put(18.97,3.97){\gnode}
\put(23.97,3.97){\gnode}
\put(27.97,3.97){\gnode}
\put(37.95,3.96){\gnode}
\Yboxdim{10pt}
\put(1,0){\yng(3,3,2,2,1)}
\put(0.2,0.1){1}
\put(0.2,1.6){0}
\put(0.2,3.6){1}
\put(2.2,0.1){1}
\put(3.2,1){0}
\put(3.2,2){0}
\put(4.2,3){0}
\put(4.2,4){1}
\put(7,1){\yng(3,3,1,1)}
\put(6.2,1.6){0}
\put(6.2,3.6){0}
\put(8.2,1){0}
\put(8.2,2){0}
\put(10.2,3){0}
\put(10.2,4){0}
\put(13,2){\yng(3,1,1)}
\put(12.2,2.6){0}
\put(12.2,4.1){1}
\put(14.2,2.1){0}
\put(14.2,3.1){0}
\put(16.2,4.1){0}
\put(18,3){\yng(2,1)}
\put(17.2,3.1){0}
\put(17.2,4.1){0}
\put(19.2,3){0}
\put(20.2,4){0}
\put(23,4){\yng(2)}
\put(22.2,4.1){0}
\put(25.2,4.1){0}
\put(28,4){\yng(1)}
\put(27.2,4.1){0}
\put(29.2,4.1){0}
\put(33.4,4){$\emptyset$}
\put(38,4){\yng(1)}
\put(36.6,4.1){$-1$}
\put(39.2,4.1){0}
\end{picture}
\end{center}

\begin{center}
\unitlength 10pt
\begin{picture}(3,7)
\newcommand{\kuu}{{}}
\put(-0.5,3){$\delta_1$}
\put(2,0){\young(\kuu\kuu11,\kuu\kuu2,\kuu13,224,3,4)}
\put(1,7){\vector(0,-1){8}}
\end{picture}
\end{center}

\begin{center}
\unitlength 10pt
\begin{picture}(40,5)(2,0)
\Yboxdim{10pt}
\put(1,0){\yng(3,3,2,2,1)}
\put(0.2,0.1){1}
\put(0.2,1.6){0}
\put(0.2,3.6){1}
\put(2.2,0.1){1}
\put(3.2,1){0}
\put(3.2,2){0}
\put(4.2,3){0}
\put(4.2,4){1}
\put(7,1){\yng(3,3,1,1)}
\put(6.2,1.6){0}
\put(6.2,3.6){0}
\put(8.2,1){0}
\put(8.2,2){0}
\put(10.2,3){0}
\put(10.2,4){0}
\put(13,2){\yng(3,1,1)}
\put(12.2,2.6){0}
\put(12.2,4.1){0}
\put(14.2,2.1){0}
\put(14.2,3.1){0}
\put(16.2,4.1){0}
\put(18,3){\yng(1,1)}
\put(17.2,3.6){0}
\put(19.2,3){0}
\put(19.2,4){0}
\put(23,4){\yng(1)}
\put(22.2,4.1){0}
\put(24.2,4.1){0}
\put(28.4,4){$\emptyset$}
\put(33.4,4){$\emptyset$}
\put(38.4,4){$\emptyset$}
\end{picture}
\end{center}
The final rigged configuration and $T$ of the above diagrams
give the image of $\Psi$.
Under the bijection \cite{KR} the final rigged configuration
corresponds to the following element:
\[
\Yvcentermath1
p'=
\young(111)\otimes\young(222)\otimes
\young(133)\otimes\young(44)\otimes
\young(35)\otimes\young(4)\otimes\young(6).
\]
\end{example}

\begin{remark}
Let $p$ and $p'$ as in Example \ref{ex:Psi}
and again consider them as elements of $D^{(1)}_8$.
If we apply the involution $\sigma$ at Section 5.3 of \cite{LOS},
we have
\[
\Yvcentermath1
\Yboxdim 12pt
\newcommand{\beight}{\bar{8}}
\newcommand{\bseven}{\bar{7}}
\newcommand{\bsix}{\bar{6}}
\sigma(p)=
\young(\beight\beight\beight)\otimes\young(88\bseven)\otimes
\young(6\beight\bsix)\otimes\young(\bseven\bsix)\otimes
\young(6\bsix)\otimes\young(7)\otimes\young(\bseven).
\]
Then $p'$ coincides with the $I_0$-highest element
corresponding to $\sigma (p)$.
We expect that the same relation holds for arbitrary
image of $\Psi$.
\end{remark}

Now we are going to
give the description of the algorithm $\tilde{\Psi}$
which will be shown to be the inverse of $\Psi$.
\begin{definition}\label{def:delta-1}
The map $\tilde{\delta}_k$
\[
\tilde{\delta}_k:(\nu^\bullet,J^\bullet)
\longmapsto (\nu'^\bullet,J'^\bullet)
\]
is defined by the following algorithm.
Here the integer $k$ should satisfy $k\leq a^\diamondsuit$.
\begin{enumerate}
\item[(i)]
Starting with $\nu^{(k)}$, choose rows of $\nu^{(a)}$
($a\leq a^\diamondsuit$) recursively as follows.
To initialize the process,
let us tentatively assume that we have chosen an
infinitely long row of $\nu^{(k-1)}$.
Suppose that we have chosen a row of $\nu^{(a-1)}$.
Find a longest singular row of $\nu^{(a)}$ whose length
does not exceed the length of the chosen row of $\nu^{(a-1)}$.
If there is no such row, suppose that we have chosen
a length 0 row of $\nu^{(a)}$ and continue.
Otherwise choose one such singular row and continue.
\item[(ii)]
Suppose that we have chosen the length $l$ row of $\nu^{(a^\diamondsuit)}$.
To finish the process, do one of the following:
\begin{enumerate}
\item If $\diamondsuit =\hdom$ or $\sdom$,
choose one of the length $l$ rows
of $\nu^{(n)}$.
\item The case $\diamondsuit =\vdom$.
If $|\nu^{(n-1)}|=|\nu^{(n)}|$, choose
one of the length $l$ rows of $\nu^{(n)}$.
On the other hand, if $|\nu^{(n-1)}|<|\nu^{(n)}|$,
choose one of the length $l$
rows of $\nu^{(n-1)}$.
\end{enumerate}
\item[(iii)]
$\nu'^\bullet$ is obtained by adding one box to each chosen row
in Step (i) and (ii).
If the length of the chosen row is 0,
create a new row at the bottom of the corresponding
partition $\nu^{(a)}$.
\item[(iv)]
The new riggings $J'^\bullet$ are defined as follows.
Take all entries of $J'^{(a)}$ to be 0 
for $a>a^\diamondsuit$.
The remaining parts are defined as follows.
For the rows that are not changed in Step (iii),
take the same riggings as before.
Otherwise set the new riggings equal to the corresponding
vacancy numbers computed by using $\nu'^\bullet$.
\end{enumerate}
\end{definition}

\begin{definition}\label{def:Psi-1}
The map $\tilde{\Psi}$
\[
\tilde{\Psi}:
\{(\nu^\bullet,J^\bullet),T\}
\longmapsto
(\nu'^\bullet,J'^\bullet)
\]
is defined as follows.
\begin{enumerate}
\item[(i)]
Let $h_1$ be the largest integer contained in $T$.
For $h_1$ do the following procedure.
Fix the rightmost occurrence of $i$ in $T$ for each $1\le i\le h_1$.
Call these fixed $h_1$ integers of $T$ the first group.
Remove all members of the first group from $T$
and do the same procedure for the new $T$.
Call the integers that are fixed this time the second group.
Repeat the same procedure recursively until all integers
of $T$ are grouped.
Let the total number of groups be $l$,
the cardinality of the $i$-th group be $h_i$ and
the position of the letter $j$ contained in the $i$-th group
be the $k_{i,j}$-th row (counting from the top of $T$).
\item[(ii)]
The output of $\tilde{\Psi}$ is defined as follows:
\[
(\nu'^\bullet,J'^\bullet)=
\tilde{\delta}_{k_{l,1}}\cdots\cdots
\tilde{\delta}_{k_{2,1}}
\tilde{\delta}_{k_{2,2}}\cdots
\tilde{\delta}_{k_{2,h_2}}
\tilde{\delta}_{k_{1,1}}
\tilde{\delta}_{k_{1,2}}\cdots
\tilde{\delta}_{k_{1,h_1}}
(\nu^\bullet,J^\bullet).
\]
\end{enumerate}
\end{definition}

\begin{example}
As for an example of $\tilde{\Psi}$,
one should read Example \ref{ex:Psi}
in the reverse order.
More precisely, reverse all arrows and apply
$\tilde{\delta}_4$, $\tilde{\delta}_3$, $\tilde{\delta}_2$,
$\tilde{\delta}_1$, $\tilde{\delta}_6$, $\tilde{\delta}_5$,
$\tilde{\delta}_4$, $\tilde{\delta}_1$, $\tilde{\delta}_4$,
$\tilde{\delta}_3$ in this order.
\end{example}

\subsection{Main statements}
The crux of the combinatorics is contained
in the following two theorems on the well-definedness
of both maps $\Psi$ and $\tilde{\Psi}$, which we will prove later.

\begin{theorem}\label{Psi_weldef}
Assume that $(\nu^\bullet,J^\bullet)\in
{\rm RC}^\diamondsuit$.
Suppose that the rank $n$ satisfies
$a^\diamondsuit\geq \ell ({\rm wt}(\nu^\bullet,J^\bullet))
+\ell (\nu^{(a^\diamondsuit)})$.
Then the map $\Psi$
\[
\Psi : (\nu^\bullet,J^\bullet)\longmapsto
\{(\nu'^\bullet,J'^\bullet),T\}
\]
is well-defined.
More precisely, $(\nu'^\bullet,J'^\bullet)
\in {\rm RC}^\varnothing$ and the LR tableau
$T\in LR^\eta_{\lambda\mu}$
satisfy the following properties:
\[
\lambda=\mathrm{wt}(\nu^\bullet,J^\bullet),\qquad
\mu=\nu^{(a^\diamondsuit)},\qquad
\eta=\mathrm{wt}(\nu'^\bullet,J'^\bullet).
\]
\end{theorem}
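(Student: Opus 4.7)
The plan is to prove both well-definedness and the LR-tableau property by induction on the step count of $\Psi$, i.e.\ on the total number of $\delta_l$ applications, which is $|\nu^{(a^\diamondsuit)}|$. The single-step well-definedness of $\delta_l$ must be packaged with invariants strong enough to control non-negativity of vacancy numbers, preservation of the stable tail of Proposition~\ref{prop:stability}, and the shape/content growth of the recording tableau $T$. The rank hypothesis $a^\diamondsuit \geq \ell(\mathrm{wt}(\nu^\bullet,J^\bullet)) + \ell(\nu^{(a^\diamondsuit)})$ supplies the ``buffer'' needed to sustain the induction: it guarantees that the components $\nu^{(a)}$ for $a > a^\diamondsuit$ remain in their stable form throughout the algorithm, so the selections in step (i) of $\delta_l$ and the downward propagation in step (ii) never collide with the top of the rigged configuration.

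For a single application of $\delta_l$, I would verify each of steps (i)--(iv) in turn. Step (i) is legitimate because Lemma~\ref{lem:width} says the longest rows of $\nu^{(a^\diamondsuit+1)},\ldots,\nu^{(n)}$ all have length $\nu^{(a^\diamondsuit)}_1 \geq l$ and are singular. The recursion in step (ii) terminates at some $k \geq 1$ since only finitely many singular rows are available at each level. For step (iv), I would use the vacancy number formula \eqref{vacancy} and the upper-convexity of Lemma~\ref{lem:convex} to show: (a) the vacancy number of every modified row remains $\geq 0$, with the single exception of the longest row of $\nu^{(n)}$ in the $\vdom$ case, where the value $-1$ is explicitly permitted under the generalized convention declared at the start of Section~3.1; (b) the old riggings on unmodified rows still do not exceed the updated vacancy numbers, which follows from the ``shortest singular'' selection rule together with the upper-convex bound; (c) the new riggings, being set equal to vacancy numbers, are admissible by definition.

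The main obstacle is proving that the recording tableau $T$ built up by $\Psi$ is actually LR. At the $i$-th invocation of $\delta_l$ the letter $i$ is appended to the end of row $k$ of $T$, where $k$ is the stopping row index returned by $\delta_l$. By induction on the total step count I would maintain three invariants on the partial tableau: (S1) within a single column-block (fixed $l$) the successive $k$-values are strictly increasing in $i$, giving strict increase down each column of $T$; (S2) across a block boundary from $l$ to $l-1$, the initial $k$-value of the new block is no larger than the $k$-value at the corresponding position of the previous block, giving weak increase along rows of $T$; (S3) after each step, the reverse row word of $T$ is Yamanouchi. Invariants (S1) and (S2) follow from tracking how shortening one box from a row affects the set of singular rows: the shortest-singular-row search is monotone, so after a box is removed the next $\delta_l$ in the same block must stop strictly lower, and at a block transition the larger available lengths of singular rows force the search to stop no later. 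For (S3), the rank bound $a^\diamondsuit \geq \ell(\lambda) + \ell(\mu)$ translates, via the recursive stopping positions $k$, directly into the inequality needed between the count of $(i-1)$'s and $i$'s in every prefix of the reverse row word.

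Finally, the output shape and weight are read off at termination. After all $|\mu|$ applications of $\delta$ every box of $\nu^{(a^\diamondsuit)}$ has been stripped, and by stability the stripping cascades through $\nu^{(a)}$ for $a > a^\diamondsuit$ so that these components also become empty; hence $(\nu'^\bullet,J'^\bullet)$ is supported on $a \leq a^\diamondsuit - 1$ and belongs to $\mathrm{RC}^\varnothing$. The weight formula $\lambda_a = \sum_{b\geq a,i} iL_i^{(b)} + |\nu^{(a-1)}| - |\nu^{(a)}|$ recorded just before Section~\ref{sec:bijection} then identifies both $\lambda = \mathrm{wt}(\nu^\bullet,J^\bullet)$ and $\eta = \mathrm{wt}(\nu'^\bullet,J'^\bullet)$, and the difference $\eta-\lambda$ matches the shape of $T$ row by row. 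Since within the $l$-th column-block we append exactly the letters $1,2,\ldots,h_l$, the number of occurrences of $i$ in $T$ equals the number of columns $l$ with $h_l \geq i$, which is $\nu^{(a^\diamondsuit)}_i$; hence the weight of $T$ is $\nu^{(a^\diamondsuit)} = \mu$, completing the identification.
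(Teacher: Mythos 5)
Your outline follows the paper's own route (single-step well-definedness of $\delta_l$ via convexity of the vacancy numbers, the block structure $\delta_l^{h_l},\ldots,\delta_1^{h_1}$, vertical strips, comparison of consecutive blocks, Yamanouchi), but the steps you dispose of in one clause each are precisely the ones that carry the technical weight, and as written they are gaps. First, your item (a) --- ``the vacancy number of every modified row remains $\geq 0$'' --- is not a consequence of Lemma \ref{lem:convex} alone. When $\delta_l$ shortens a row of $\nu^{(a)}$ of length $l^{(a)}$ to length $l^{(a)}-1$ and $p^{(a)}_{l^{(a)}-1}=0$ beforehand, the new vacancy number is $-1$ unless $l^{(a)}=l^{(a+1)}$. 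Ruling this out is the content of Case 3 of Proposition \ref{prop:delta_weldef}: assuming $l^{(a)}>l^{(a+1)}$ one shows that $p^{(a)}_i$ would have to vanish identically on an interval on which it is simultaneously \emph{strictly} upper convex (strictness supplied by the length-$l^{(a+1)}$ row of $\nu^{(a+1)}$ lying strictly inside that interval), a contradiction. That specific idea is absent from your sketch. Second, ``preservation of the stable tail'' is asserted, not derived. To know that every $\delta_l$ in a block actually reaches $\nu^{(a^\diamondsuit)}$ and $\nu^{(a^\diamondsuit-1)}$ --- so that step (i) always has a length-$l$ row to select and the $l$-th column is stripped in its entirety --- the paper needs the structural Lemma \ref{lem:large_rank_RC} on the multiplicities $m^{(a)}_i$ in the stable range together with the bound $k+h_l<a^\diamondsuit$ extracted from the rank hypothesis; this is Proposition \ref{prop:Psi_stabilize}, and nothing in your proposal substitutes for it.

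Your justification of (S3) is also misplaced: the Yamanouchi condition does not come ``directly'' from the rank bound. It follows from your (S1) and (S2), i.e.\ from Corollary \ref{cor:vertical_strip} and Lemma \ref{lem:comparison_LR}: each letter $i$ contributed by the block $\delta_{l-1}^{h_{l-1}}$ sits weakly above and strictly to the right of the letter $i$ from the preceding block, hence is read earlier in the reverse row word, so prefix counts are preserved. If you prove (S1) and (S2) properly --- which again requires tracking which rows remain singular and which vacancy numbers are frozen after each $\delta$, the two observations in the proof of Lemma \ref{lem:comparison_LR} --- then (S3) is a corollary and needs no separate appeal to the rank. As it stands the proposal is a faithful table of contents for the paper's proof rather than a proof.
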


\begin{theorem}\label{Psi-1_weldef}
Assume that $(\nu^\bullet,J^\bullet)
\in {\rm RC}^\varnothing$ and $T$ is the
LR tableau that satisfy
the following three properties:
$T\in LR^\eta_{\lambda\mu}$ where
$\lambda,\mu\in\mathcal{P}^\diamondsuit$
and $\eta=\mathrm{wt}(\nu^\bullet,J^\bullet)$.
Then the map $\tilde{\Psi}$;
\[
\tilde{\Psi}:
\{(\nu^\bullet,J^\bullet),T\}
\longmapsto
(\nu'^\bullet,J'^\bullet),
\]
is well-defined.
More precisely, we have
$(\nu'^\bullet,J'^\bullet)\in {\rm RC}^\diamondsuit$,
$\mathrm{wt}(\nu'^\bullet,J'^\bullet)=\lambda$ and
$\nu'^{(a^\diamondsuit)}=\mu$.
\end{theorem}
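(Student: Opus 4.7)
The plan is to induct on the number of columns of $\mu$ (equivalently, on the number of groups into which the procedure of Definition \ref{def:Psi-1} decomposes $T$), regarding $\tilde{\Psi}$ as the column-by-column reverse of $\Psi$: after the first group of $\tilde{\delta}$'s has acted, $\nu^{(a^\diamondsuit)}$ should have gained the leftmost column of $\mu$, placing the configuration at the state $\Psi$ would see just before running $\delta_1^{h_1}$. The base case $\mu = \varnothing$ forces $T$ to be empty and $\lambda = \eta$, and the input is already in $\mathrm{RC}^\diamondsuit$ with $\nu^{(a^\diamondsuit)} = \varnothing$.

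The first ingredient is a local reversibility lemma: if $\delta_l((\nu^\bullet, J^\bullet)) = ((\nu'^\bullet, J'^\bullet), k)$ is defined, then $\tilde{\delta}_k((\nu'^\bullet, J'^\bullet)) = (\nu^\bullet, J^\bullet)$, and symmetrically. This is established by comparing Definitions \ref{def:delta} and \ref{def:delta-1} directly: each builds a chain of singular rows across $\nu^{(1)}, \ldots, \nu^{(a^\diamondsuit)}$ together with one diamond-dependent extra row, with mutually inverse selection rules --- shortest singular row of length at least the previous versus longest singular row of length at most the previous --- and identical ``rigging equals new vacancy number'' refreshes, so reversing the chain direction recovers the original configuration.

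For the inductive step I would peel off the first group $\tilde{\delta}_{k_{1,1}} \cdots \tilde{\delta}_{k_{1,h_1}}$, applied right-to-left so that $\tilde{\delta}_{k_{1,h_1}}$ acts first. The Yamanouchi condition on $T$ forces $k_{1,1} < k_{1,2} < \cdots < k_{1,h_1}$, and the chains built by successive operators interlock without collision. After the full first group acts, $\nu^{(a^\diamondsuit)}$ gains a column of height $h_1$, the truncated tableau $T'$ remains an LR skew tableau, and $\mu' := \mu - (\text{leftmost column}) \in \mathcal{P}^\diamondsuit$, so the inductive hypothesis applies; the weight conclusion $\mathrm{wt}(\nu'^\bullet, J'^\bullet) = \lambda$ and the shape conclusion $\nu'^{(a^\diamondsuit)} = \mu$ then follow by summing the contributions over all $|\mu|$ individual operators. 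The main obstacle is to verify applicability of each intermediate $\tilde{\delta}$ within a group, namely the existence of the required longest singular row of correct length in each affected $\nu^{(a)}$ and the generalized admissibility of the intermediate configurations --- one must track how riggings and vacancy numbers evolve using the upper-convexity of Lemma \ref{lem:convex} and the refresh rule of Definition \ref{def:delta-1}(iv), and separately check that the permitted vacancy number $-1$ on the longest row of $\nu^{(n)}$ for $\diamondsuit = \vdom$ only appears at odd intermediate stages and is cleared by the next operator in the group.
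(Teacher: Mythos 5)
Your overall skeleton (induction on the groups of $T$, each group contributing one column of $\mu$ to $\nu^{(a^\diamondsuit)}$) matches the shape of the paper's argument, but the proposal leaves the actual mathematical content of the theorem unproved, and the framing around a ``local reversibility lemma'' is the wrong foundation. The theorem must establish that $\tilde{\Psi}$ is defined on \emph{every} pair in $\mathrm{RC}^\varnothing\times LR^\eta_{\lambda\mu}$ --- this is precisely what later yields surjectivity of $\Psi$. Knowing that $\tilde{\delta}_k$ inverts $\delta_l$ only tells you $\tilde{\Psi}$ is defined on the image of $\Psi$, so you cannot ``place the configuration at the state $\Psi$ would see'' without assuming what you are trying to prove. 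The paper avoids this by proving well-definedness of $\tilde{\delta}_k$ (Proposition \ref{prop:delta-1_weldef}) directly and from scratch: a three-case analysis of $\Delta p^{(a)}_j$, where the hard case (a row of $\nu^{(a)}$ receiving a box with $p^{(a)}_{l^{(a)}}=0$) is resolved by showing $l^{(a-1)}=l^{(a)}$ via a strict-convexity contradiction built on Lemma \ref{lem:convex}. You name this as ``the main obstacle'' and describe the tools, but you do not carry out the argument; that verification \emph{is} the theorem.

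The same applies to the inductive step. Your claims that the chains of successive $\tilde{\delta}$'s within a group ``interlock without collision'' and that each group deposits a full column of height $h_i$ correspond to the paper's Lemmas \ref{lem:twodelta-1s} and \ref{lem:delta-1_rig}: the inequality $l'^{(a)}\le l^{(a+1)}$ for consecutive operators in a group, and the persistence of singularity of newly created rows throughout the processing of that group (and into the next group, which is what guarantees the $(i+1)$-st column lands on top of the rows built by group $i$). These require tracking how each $\tilde{\delta}$ raises the vacancy numbers of the rows between $l^{(a+1)}$ and $l^{(a)}$ by one, which is what blocks the next operator from colliding. None of this is established in the proposal; it is asserted. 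Also, your parenthetical that the $-1$ vacancy number for $\diamondsuit=\vdom$ is ``cleared by the next operator in the group'' is not quite the right mechanism --- in the paper it is handled by showing $\tilde{\delta}_k^2$ (a single letter of $T$ acting twice through the spin nodes) is well-defined, not by pairing consecutive letters of a group. In short: right road map, but the proof itself is missing.
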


By construction, $\delta$ and $\tilde{\delta}$ are
mutually inverse procedures.
Therefore the above theorems imply the following
main theorem.

\begin{theorem}\label{th:main}
Assume that $(\nu^\bullet,J^\bullet)\in
{\rm RC}^\diamondsuit$.
Suppose that the rank $n$ satisfies
$a^\diamondsuit\geq \ell ({\rm wt}(\nu^\bullet,J^\bullet))
+\ell (\nu^{(a^\diamondsuit)})$.
Then $\Psi$ gives a bijection between ${\rm RC}^\diamondsuit$
and the disjoint union of product sets of ${\rm RC}^\varnothing$
and the LR tableaux as follows:
\begin{align*}
\Psi:\;&
(\nu^\bullet,J^\bullet)\longmapsto\{(\nu'^\bullet,J'^\bullet),T\},\\
&(\nu^\bullet,J^\bullet)\in\mathrm{RC}^\diamondsuit (\lambda,\mathbf{L}),\quad
\{(\nu'^\bullet,J'^\bullet),T\}\in
\mathrm{RC}^\varnothing(\eta,\mathbf{L})\times LR^\eta_{\lambda\mu},
\end{align*}
where $\lambda$, $\mu$, $\eta$
satisfy
\[
\lambda=\mathrm{wt}(\nu^\bullet,J^\bullet),\qquad
\mu=\nu^{(a^\diamondsuit)},\qquad
\eta=\mathrm{wt}(\nu'^\bullet,J'^\bullet).
\]
That is, for fixed $\lambda$ and $\mathbf{L}$, $\Psi$ defines a bijection
\[
\mathrm{RC}^\diamondsuit(\lambda,\mathbf{L}) \longrightarrow
\bigsqcup_{\mu\in\mathcal{P}^\dia_{|\El|-|\la|},\eta\in\mathcal{P}^\cells_{|\El|}}
\mathrm{RC}^\varnothing(\eta,\mathbf{L}) \times LR^\eta_{\lambda\mu}.
\]
The inverse procedure is given by $\Psi^{-1}=\tilde{\Psi}$.
\end{theorem}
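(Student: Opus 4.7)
My plan is to derive Theorem \ref{th:main} from the two well-definedness results \ref{Psi_weldef} and \ref{Psi-1_weldef}, reducing the bijection claim to two verifications: that $\delta_l$ and $\tilde{\delta}_k$ are mutually inverse as single-step operations, and that the recording tableau $T$ together with the grouping procedure of $\tilde{\Psi}$ faithfully encodes and decodes the sequence of $\delta$-steps applied by $\Psi$. Specifically, I would invoke Theorem \ref{Psi_weldef} to conclude that $\Psi$ maps $\mathrm{RC}^\diamondsuit(\lambda,\El)$ into $\bigsqcup_{\mu,\eta} \mathrm{RC}^\varnothing(\eta,\El) \times LR^\eta_{\lambda\mu}$ with $\mu = \nu^{(a^\diamondsuit)}$ and $\eta = \wt(\nu'^\bullet,J'^\bullet)$, and symmetrically Theorem \ref{Psi-1_weldef} to conclude that $\tilde{\Psi}$ maps the disjoint union back into $\mathrm{RC}^\diamondsuit$ with the correct weight and $a^\diamondsuit$-th configuration. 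What remains is to establish $\tilde{\Psi}\circ\Psi = \mathrm{id}$ and $\Psi\circ\tilde{\Psi} = \mathrm{id}$.

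For single-step inversion, if $\delta_l$ applied to $(\nu^\bullet,J^\bullet)$ selects a chain of rows from the top level (in $\nu^{(n)}$ or $\nu^{(n-1)}$, depending on $\diamondsuit$ and, in the $\vdom$ case, on whether $|\nu^{(n-1)}| = |\nu^{(n)}|$) down to $\nu^{(k)}$ via the ``shortest singular row of length $\geq$'' rule, then after the box removals and the rigging reset the resulting shortened rows are each the appropriate singular target row at their level. Consequently $\tilde{\delta}_k$, which runs upward from $\nu^{(k)}$ under the dual ``longest singular row of length $\leq$'' rule, selects exactly these rows, adds the boxes back, and recovers the original configuration and riggings. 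The $\vdom$ boundary dichotomy flips correctly under a single box removal/addition at the top, and the transient vacancy number $-1$ noted in the excerpt does not interfere with row selection. The reverse direction, that $\delta$ undoes one application of $\tilde{\delta}$, follows by the same argument with the roles of the two selection rules swapped.

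For the encoding, $\Psi$ processes the columns of $\nu^{(a^\diamondsuit)}$ from right to left, applying $\delta_j$ exactly $h_j$ times (with $h_1 \geq h_2 \geq \cdots \geq h_l$); the $i$-th application in the $\delta_j$-block writes the label $i$ in row $k$. In $\tilde{\Psi}$, the grouping procedure peels off the rightmost occurrences of $1,2,\dots$ iteratively, producing groups whose sizes are exactly $h_1, h_2, \dots, h_l$. Each group corresponds to one $\delta_j$-block of $\Psi$ in reverse order, and the positions within each group recover the $k$'s output by the $\delta$'s. The LR property of $T$ (granted by Theorem \ref{Psi_weldef}) is what makes the successive ``rightmost'' choices unambiguous. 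Applying the $\tilde{\delta}$'s in the prescribed order therefore reverses the $\delta$-sequence step by step; combined with the single-step inversion this yields $\tilde{\Psi}\circ\Psi = \mathrm{id}$, and $\Psi\circ\tilde{\Psi} = \mathrm{id}$ is symmetric. The main obstacle is the single-step inversion in the $\vdom$ case: the toggle between $\nu^{(n-1)}$ and $\nu^{(n)}$ at the top, and the transient negative vacancy numbers on the longest row of $\nu^{(n)}$, require careful case analysis; the remaining cases reduce to the familiar KKR-type step inversion from \cite{KSS} adapted to the stable setting.
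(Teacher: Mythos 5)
Your proposal is correct and follows essentially the same route as the paper, which derives Theorem \ref{th:main} from Theorems \ref{Psi_weldef} and \ref{Psi-1_weldef} together with the one-line observation that ``by construction, $\delta$ and $\tilde{\delta}$ are mutually inverse procedures.'' You merely supply more detail than the paper does on the single-step inversion and on how the grouping procedure of $\tilde{\Psi}$ recovers the blocks $\delta_j^{h_j}$ of $\Psi$ (the latter resting on Lemma \ref{lem:comparison_LR}, which guarantees the rightmost occurrences of each letter come from the last block), so there is no substantive divergence.
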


Let us examine the restrictions on the rank $n$
for $\Psi$ and $\tilde{\Psi}$.
For simplicity, consider $\diamondsuit =\vdom$.
Let $N_{out}$ (resp. $N_{in}$) be the length of
the outer (resp. inner) shape of $T$ and $|T|$ be
the number of (either filled or not filled) boxes in $T$.
Then, by construction, $\tilde{\Psi}$ is defined
if the rank $n$ satisfies $N_{out}+2\leq n$.
Let us consider $\Psi$.
Following Theorem \ref{th:main}, we see that
the weight of the LR tableau
$T$ coincide with the partition $\nu^{(a^\diamondsuit)}$
of ${\rm RC}^\diamondsuit$ and the inner shape
of $T$ coincides with ${\rm wt}(\nu^\bullet,J^\bullet)$.
Since $T$ has column strict property, we have
$N_{out}-N_{in}\leq \ell(\nu^{(a^\diamondsuit)})$.
On the other hand,
the condition in Theorem \ref{Psi_weldef} can be rephrased as
$N_{in}+\ell(\nu^{(a^\diamondsuit)})+2\leq n$.
To summarize the condition on the rank $n$ for $\Psi$ case is
more restrictive and requires the rank $n$
to be larger than that for $\tilde{\Psi}$.
We remark that in the present definition of $\Psi$, the partition
$\nu^{(a^\diamondsuit)}$ is always removed entirely.
For this point one can modify the definition of $\Psi$ slightly
so that the required condition on the rank $n$ decreases
by 1 compared with the original $\Psi$.

In order to get a feeling about the condition on the rank,
let us recall the bijection between ${\rm RC}^\varnothing$
and tensor product of type $A^{(1)}_n$ crystals in \cite{KSS}.
Then if ${\rm RC}^\varnothing$ corresponds to an element of
$\bigotimes_i B^{r_i,s_i}$, we have $|T|=\sum_i r_is_i$.
Thus the quantity $\ell(\nu^{(a^\diamondsuit)})$
is always smaller than or equal to $\sum_i r_is_i$
(usually, much smaller than $\sum_i r_is_i$).
Moreover, recall the bijection between ${\rm RC}^\vdom$
and tensor products of type $D^{(1)}_n$ crystals in \cite{OSS,SS}.
Let the number of barred letters (i.e., $\bar{1},\bar{2},\ldots$)
of the corresponding tensor product of crystals be $M$.
Recall that under the present condition on the rank
there is no $\bar{n}$.
Then the total number of letters filled in $T$ is equal to $2M$.

\subsection{Proof of Theorem \ref{Psi_weldef}}
Let us fix notations that will be used in this subsection.
We will refer to Step (i), etc., of Definition \ref{def:delta}
simply as Step (i), etc.
We denote by $l^{(a)}$ the length of a
row that is removed from $\nu^{(a)}$ by $\delta_l$.
In other words, $l^{(a)}$ is the position of the box
to be removed by $\delta_l$.
Then we have $l^{(a-1)}\geq l^{(a)}\geq l^{(a+1)}$.
We denote by $(\nu'^\bullet,J'^\bullet)$
the image of $\delta_l$ and $p_l'^{(a)}$ the vacancy
number with respect to $\nu'^\bullet$.
If we further apply $\delta_l$ on $(\nu'^\bullet,J'^\bullet)$,
we use $l'^{(a)}$ to express the position of the
box to be removed by the second $\delta_l$.
We use the symbol $\Delta$ to express the differences
of the quantities for $(\nu'^\bullet,J'^\bullet)$
minus $(\nu^\bullet,J^\bullet)$.
For example, $\Delta p^{(a)}_l=p'^{(a)}_l-p^{(a)}_l$.
Let the height of the $i$-th column (counting from
left) of $\nu^{(a^\diamondsuit)}$ be $h_i$.
Note that the procedure $\Psi$ does not change the quantum space.
Therefore if the arguments concern only the differences of
the vacancy numbers, we can neglect the effect of
the quantum space on the vacancy numbers.

During this subsection, we assume that the rank $n$
satisfies the condition
\begin{equation}\label{eq:condition_on_rank}
\ell({\rm wt}
(\nu^\bullet,J^\bullet))+\ell(\nu^{(a^\diamondsuit)})
\leq a^\diamondsuit.
\end{equation}
We start with the following lemma, that makes the statement of Proposition \ref{prop:stability} (1)
more precise.
\begin{lemma} \label{lem:large_rank_RC}
Set $l=\nu^{(a^\diamondsuit)}_1$ and
$a_{l+1}=\ell(\wt(\nu^\bullet,J^\bullet))+1$. Then there exists an integer sequence
$a_{l+1}\leq a_l\leq a_{l-1}\leq\cdots\leq a_1$ that satisfies
\begin{align*}
&a_i-a_{i+1}\leq m^{(a^\diamondsuit)}_i,\\
&m^{(a_{i+1})}_i<m^{(a_{i+1}+1)}_i<\cdots <m^{(a_{i})}_i
=\cdots =m^{(a^\diamondsuit)}_i
\end{align*}
for any $i$ such that $l\geq i\geq 1$.
\end{lemma}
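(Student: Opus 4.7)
My plan is a downward induction on the column index $i$, from $i=l$ down to $i=1$, constructing $a_i$ one step at a time. Assuming by induction that $a_{l+1}\le a_l\le\cdots\le a_{i+1}$ have already been produced with the stated chain-and-plateau behaviour for indices $j>i$, I would define
\[
a_i \;=\; \min\bigl\{\,a\in[a_{i+1},a^\diamondsuit]\;:\; m_i^{(a')}=m_i^{(a^\diamondsuit)}\text{ for every }a'\in[a,a^\diamondsuit]\,\bigr\}.
\]
By construction the plateau condition $m_i^{(a_i)}=\cdots=m_i^{(a^\diamondsuit)}$ and the monotonicity $a_i\ge a_{i+1}$ hold automatically. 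The bound $a_i-a_{i+1}\le m_i^{(a^\diamondsuit)}$ will be a consequence of the strict-increase claim, since a strictly increasing sequence of non-negative integers that reaches $m_i^{(a^\diamondsuit)}$ can make at most $m_i^{(a^\diamondsuit)}$ steps; the rank hypothesis, together with the telescoping estimate $a_1-a_{l+1}\le\sum_j m_j^{(a^\diamondsuit)}=\ell(\nu^{(a^\diamondsuit)})$, then keeps every $a_i$ inside the admissible range.

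The heart of the argument is the strict-increase claim $m_i^{(a)}<m_i^{(a+1)}$ on $[a_{i+1},a_i-1]$, which I would prove by a convexity argument in the style of Lemma \ref{lem:width} and Corollary \ref{cor:positivity}. The rank assumption puts the entire interval of interest strictly between $k$ and the boundary indices near $n$, so throughout $[a_{i+1},a^\diamondsuit]$ the vacancy number takes its uniform interior form $p_i^{(a)}=Q_i^{(a-1)}-2Q_i^{(a)}+Q_i^{(a+1)}$, with no quantum-space or boundary correction. By the induction hypothesis the multiplicities $m_j^{(a)}$ for $j>i$ are frozen at $m_j^{(a^\diamondsuit)}$ throughout this range, so their contribution to the above second difference vanishes identically. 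A failure of strict increase at some $a\in[a_{i+1},a_i-1]$ would mean that $m_i^{(\cdot)}$ dips below its asymptotic value $m_i^{(a^\diamondsuit)}$ and later recovers to it; choosing a local extremum $a''$ of this dip and looking at the vacancy number $p_i^{(a'')}$ at a length-$i$ row (singular by Lemma \ref{lem:width} applied to the truncated configuration) then yields a contradiction with $p_i^{(a'')}\ge 0$, along exactly the lines of Lemma \ref{lem:convex}.

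The main obstacle I anticipate is that the shorter-column multiplicities $m_j^{(a)}$ with $j<i$ are not controlled by the induction and also enter the second difference $Q_i^{(a-1)}-2Q_i^{(a)}+Q_i^{(a+1)}$ via the terms $\sum_{j<i} j\bigl[m_j^{(a-1)}-2m_j^{(a)}+m_j^{(a+1)}\bigr]$. My plan to handle this is to observe that a local dip of $m_i^{(\cdot)}$ forces a compensating bulge in some $m_j^{(\cdot)}$ with $j<i$, because $|\nu^{(a)}|$ is constant in $a$ throughout the stability range by Lemma \ref{lem:stability}; this compensating bulge itself violates a vacancy-number inequality at a length-$j$ row in the same way, so running the contradiction at the smallest index $i$ where the strict-increase assertion first fails turns this into a clean downward descent. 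The type-specific variants ($\diamondsuit=\cell,\hdom,\vdom$) should be handled uniformly, since by the rank hypothesis all the indices $a_i$ stay in the interior region where the boundary modifications listed just before Lemma \ref{lem:convex} are inactive.
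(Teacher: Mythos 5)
Your skeleton --- downward induction on the column index, defining $a_i$ as the left end of the plateau of $a\mapsto m_i^{(a)}$, and deducing $a_i-a_{i+1}\le m_i^{(a^\diamondsuit)}$ by counting strict increments starting from $m_i^{(a_{i+1})}\ge 0$ --- is exactly the paper's, and the observation that the longer columns are frozen on $[a_{i+1},a^\diamondsuit]$ by the induction hypothesis is the right setup. The gap is in the one step that carries all the content: proving that $a\mapsto m_i^{(a)}$ is nondecreasing and strictly increasing until it reaches its plateau. You propose to read this off from $p_i^{(a'')}\ge 0$ at a ``local extremum of a dip,'' and you yourself notice why this cannot work: $Q_i^{(a-1)}-2Q_i^{(a)}+Q_i^{(a+1)}$ contains the uncontrolled contribution $\sum_{j<i}j\bigl(m_j^{(a-1)}-2m_j^{(a)}+m_j^{(a+1)}\bigr)$ of the shorter columns. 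The device the paper uses, and which you do not find, is to take the \emph{difference of two vacancy numbers in the column direction}: since $Q_i(\mu)-Q_{i-1}(\mu)=\sum_{k\ge i}m_k$, the quantity $p_i^{(a)}-p_{i-1}^{(a)}$ equals $\sum_{k\ge i}\bigl(m_k^{(a-1)}-2m_k^{(a)}+m_k^{(a+1)}\bigr)$ on the relevant range, and the columns $j<i$ drop out entirely. Combining the exact vanishing $p_i^{(a)}=0$ (which follows from Lemma \ref{lem:width} and Proposition \ref{prop:stability}, and is needed here --- mere nonnegativity does not suffice) with $p_{i-1}^{(a)}\ge 0$ gives $m_i^{(a-1)}-2m_i^{(a)}+m_i^{(a+1)}\le 0$ after the frozen $k>i$ terms cancel; this discrete concavity in $a$, once seeded at the top, yields both the monotonicity and the ``once strict, always strict going down'' propagation in one stroke.

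Your fallback for the shorter-column problem --- a dip in $m_i$ forces a compensating bulge in some $m_j$ with $j<i$, which ``violates a vacancy-number inequality in the same way,'' handled by descending to the smallest failing index --- is circular: the downward induction reaches column $i$ before any column $j<i$, so no control over those columns is available at that stage, and a bulge in $m_j^{(a)}$ does not by itself contradict $p_j^{(a)}\ge 0$. A second, more minor, error is the claim that the boundary modifications of the vacancy numbers are inactive. The induction must be seeded at $a=n,n-1,\dots$, where the special formulas (e.g.\ $p_i^{(n)}=Q_i^{(n-2)}-2Q_i^{(n)}$ for $D^{(1)}_n$) produce the initial inequalities $m_l^{(n-2)}\le 2m_l^{(n)}$ and $m_l^{(n-2)}\le 2m_l^{(n-1)}$ that start the downward propagation; the rank hypothesis lengthens the interval $[k,a^\diamondsuit]$ but does not move the starting point of the argument away from the Dynkin-diagram boundary.
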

\begin{proof}
We give the proof for $\diamondsuit =\vdom$.
Proofs for the other cases are similar.
By Lemma \ref{lem:width} and
$|\nu^{(n-2)}|=2|\nu^{(n)}|$, we see that $p^{(n)}_l=0$.
On the other hand, from Corollary \ref{cor:positivity},
we have $p^{(n)}_{l-1}\geq 0$.
Combining these two relations, we have
$p^{(n)}_l-p^{(n)}_{l-1}=(Q^{(n-2)}_l-Q^{(n-2)}_{l-1})
-2(Q^{(n)}_l-Q^{(n)}_{l-1})=m^{(n-2)}_l-2m^{(n)}_l\leq 0$,
hence $m^{(n-2)}_l\leq 2m^{(n)}_l$.
Similarly, we have $m^{(n-2)}_l\leq 2m^{(n-1)}_l$.
Next, from $p^{(n-2)}_l=0$ and $p^{(n-2)}_{l-1}\geq 0$,
we obtain $m_l^{(n-3)}-2m_l^{(n-2)}+m_l^{(n-1)}+m_l^{(n)}\leq 0$.
Combining this with the previously obtained inequalities
$m^{(n-2)}_l/2-m^{(n)}_l\leq 0$ and
$m^{(n-2)}_l/2-m^{(n-1)}_l\leq 0$, we deduce
$m_l^{(n-3)}\leq m_l^{(n-2)}$.
We can recursively do the same arguments and obtain
$m^{(a-1)}_l\leq m^{(a)}_l$ for $k<a\leq n-2$.
Moreover, from the same arguments, we see that once
$m^{(a'-1)}_l<m^{(a')}_l$ happens then
$m^{(a-1)}_l<m^{(a)}_l$ holds for all $k<a\leq a'$.
Define $a_l$ to be the maximal such $a'$.
If there is no such $a'$, set $a_l=k+1$.
Then if $m^{(n-2)}_l<a_l-k$ we have $m^{(k+1)}_l\leq 0$,
which contradicts with Lemma \ref{lem:width}.
Therefore we conclude that $a_l-k\leq m^{(n-2)}_l$.
In order to go further, we observe the following.
By definition of $a_l$, we have
$m^{(a_l)}_l=m^{(a_l+1)}_l=\cdots =m^{(n-2)}_l=
m^{(n-1)}_l/2=m^{(n)}_l/2$.
{}From this we can deduce $p^{(a_l+1)}_{l-1}=p^{(a_l+2)}_{l-1}=
\cdots =p^{(n)}_{l-1}=0$.
Then by the same arguments that are given in the previous case, we have
$m^{(a-1)}_{l-1}\leq m^{(a)}_{l-1}$ for $a_l<a\leq n-2$.
Again, in decreasing $a$, once the inequality is strict
so are the rest of inequalities.
Suppose that $a_{l-1}$ is the maximal index where such a strict
inequality appears.
This time, in order to satisfy the condition
$m^{(a_l)}_{l-1}\geq 0$,
$a_{l-1}$ has to satisfy $a_{l-1}-a_l\leq m^{(n-2)}_{l-1}$.
Starting from $a_{l-1}$, we can recursively continue the
similar arguments and finish the proof.
\end{proof}
\begin{prop}\label{prop:delta_weldef}
$\delta_l$ is well defined
if $\diamondsuit =\hdom$ or $\sdom$,
and $\delta_l^2$ is well defined
if $\diamondsuit =\vdom$.
\end{prop}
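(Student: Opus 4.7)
The plan is to verify two things: (a) every selection step in Definition \ref{def:delta} can be carried out, and (b) the resulting $(\nu'^\bullet,J'^\bullet)$ satisfies the rigged-configuration constraints of $\mathrm{RC}^\dia$, allowing for the unique exceptional pattern (vacancy number $-1$, rigging $0$ on the longest row of $\nu^{(n)}$) in the $\dia=\vdom$ case after a single application. The main input is Lemma \ref{lem:large_rank_RC}, which, under the assumption that $n$ is large enough, controls the block of equal multiplicities $m^{(a)}_i$ near $a=a^\dia$, together with the rank hypothesis \eqref{eq:condition_on_rank}.

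First I would check that Step (i) can always start. For $\dia=\hdom,\sdom$, the choice of a length-$l$ row of $\nu^{(n)}$ is tautological from the hypothesis on $l$. For $\dia=\vdom$ I would use Lemma \ref{lem:large_rank_RC} to show that when $|\nu^{(n-1)}|=|\nu^{(n)}|$ the required length-$l$ row of $\nu^{(n-1)}$ is present, and the second sub-case is automatic. Next, for Step (ii) I would prove by descending induction on $a$, from $a^\dia$ down, that the chain of chosen rows satisfies $l=l^{(a^\dia)}\le l^{(a^\dia-1)}\le\cdots\le l^{(k)}$ with $l^{(a)}\le\nu^{(a)}_1$. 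Lemma \ref{lem:large_rank_RC} together with the rank assumption guarantees that within the stable range of $a$ the required singular rows at each level exist; in the non-stable range, the algorithm is explicitly permitted to terminate at some $k\ge 1$.

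Next I would check that the output lies in $\mathrm{RC}^\dia$. For this I would compute the differences $\Delta p^{(a)}_i$ using formulas \eqref{vacancy} and the type-specific formulas listed in Section~2.3: a box is removed from $\nu^{(a)}$ exactly at the chosen positions, so $\Delta Q^{(a)}_i$ is $-1$ for $i\ge l^{(a)}$ and $0$ otherwise, and the change $\Delta p^{(a)}_i$ becomes a small sum of $\pm 1$'s governed by the Cartan entries. The two things to verify are (1) for every row of $\nu^{(a)}$ not modified by $\delta_l$, its rigging does not exceed the new vacancy number (equivalently, $\Delta p^{(a)}_i\ge 0$ whenever the row length $i$ is $<l^{(a)}$ or $>l^{(a)}$ with positive multiplicity), and (2) for each modified row, the new rigging, which is reset to the vacancy number computed at length $l^{(a)}-1$ with respect to $\nu'^\bullet$, is $\ge 0$. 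Part (1) follows from a short case check against the type-dependent formulas, using that the chain $l^{(a)}$ is non-increasing in $a$. Part (2) follows from the inequality $p^{(a)}_{l^{(a)}}\ge 0$ already available in the input together with the explicit $\Delta p$ computation.

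The main obstacle, and the reason the statement for $\dia=\vdom$ requires the square $\delta_l^2$, is the asymmetry between $\nu^{(n-1)}$ and $\nu^{(n)}$. A single $\delta_l$ for $\dia=\vdom$ chooses a row in exactly one of $\nu^{(n-1)}$ or $\nu^{(n)}$ depending on whether $|\nu^{(n-1)}|=|\nu^{(n)}|$; after such a step this equality is broken and the vacancy number $p^{(n)}_{\text{top}}$ may drop to $-1$ while the associated rigging remains $0$. The plan here is to show that the selection rule in Step (i)(b) then forces the second $\delta_l$ to act on the other partition ($\nu^{(n)}$ when the first acted on $\nu^{(n-1)}$, and vice versa), so that after two applications the identity $|\nu^{(n-1)}|=|\nu^{(n)}|$ among the longest columns is restored. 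A direct computation of $\Delta p^{(n)}_l$ over the two consecutive steps then shows that the temporary $-1$ is cancelled, yielding a genuine rigged configuration. For the other kinds $\hdom,\sdom$ no such asymmetry appears, and a single $\delta_l$ is already well-defined by the argument above.
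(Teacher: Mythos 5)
Your outline gets the easy parts right (the selection steps terminate by fiat, and the treatment of $\nu^{(a)}$ for $a\ge a^\diamondsuit$ via the stability of Proposition \ref{prop:stability} and the two-step bookkeeping for $\dia=\vdom$ is essentially what the paper does), but it misses the two points that carry all the difficulty. First, for unmodified rows your parenthetical ``equivalently, $\Delta p^{(a)}_i\ge 0$'' is false: when a row of $\nu^{(a)}$ has length $j$ with $l^{(a+1)}\le j<l^{(a)}$, one computes $\Delta p^{(a)}_j=-1$, so the vacancy number genuinely drops. The condition is saved not by positivity of $\Delta p$ but by the observation that such a row, having length $\ge l^{(a+1)}$ yet not having been selected, must be \emph{non-singular}, so $p^{(a)}_j-J\ge 1$ and hence $p'^{(a)}_j\ge J\ge 0$. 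Your proposal never invokes non-singularity, so this case is unproved.

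Second, and more seriously, for the modified rows you claim the positivity of the reset rigging ``follows from $p^{(a)}_{l^{(a)}}\ge 0$ together with the explicit $\Delta p$ computation.'' It does not. The new rigging equals $p'^{(a)}_{l^{(a)}-1}=p^{(a)}_{l^{(a)}-1}+\Delta p^{(a)}_{l^{(a)}-1}$, and if $p^{(a)}_{l^{(a)}-1}=0$ while $l^{(a)}>l^{(a+1)}$ this is $-1$. The heart of the paper's proof (its Case 3) is to rule this out: assuming $l^{(a)}>l^{(a+1)}$ one uses the convexity Lemma \ref{lem:convex} to force $p^{(a)}_i=0$ on a whole interval $[j,l^{(a)}]$ containing a row of $\nu^{(a+1)}$ of length $l^{(a+1)}$ strictly inside it; the contribution $Q^{(a+1)}_i$ then makes $p^{(a)}_i$ strictly concave on that interval, contradicting its vanishing, whence $l^{(a)}=l^{(a+1)}$ and $\Delta p^{(a)}_{l^{(a)}-1}=0$. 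Without this argument (or a substitute for it) the well-definedness of $\delta_l$ is simply not established, so the proposal has a genuine gap at the crux of the proposition.
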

\begin{proof}
In order to check the well-definedness of $\delta_l$
and $\delta_l^2$, we need to check the positivity of
the vacancy numbers and the inequality that
the riggings are less than or equal to
the corresponding vacancy numbers.
To do this, we have to distinguish the following three cases.

Let us remark that, from the condition
(\ref{eq:condition_on_rank}) on the rank $n$,
we can use Proposition \ref{prop:stability}
so that we can assume the specific nature of the rigged configurations
stated in that proposition.
\bigskip\\
\underline{Case 1.}
Let us check the well-definedness for $\nu^{(a)}$
($a\geq a^\diamondsuit$).
First check the cases $\diamondsuit =\hdom$ or $\sdom$.
{}From Proposition \ref{prop:stability},
we see that $\nu^{(n-1)}=\nu^{(n)}$ and $J^{(n)}_i=0$ for all $i$.
In Step (i), $\delta_l$ removes
one box from each $l$-th column of $\nu^{(n-1)}$ and $\nu^{(n)}$.
Thus we have $\nu'^{(n-1)}=\nu'^{(n)}$
so that the vacancy numbers for $\nu'^{(n)}$ are all 0.
This shows the well-definedness for $\nu^{(n)}$.
Let us consider $\nu^{(n-1)}$.
In Step (iii), $\delta_l$ may remove a
row of $\nu^{(n-2)}$ that is longer than or equal to $l$.
We see that it will contribute to $\Delta Q^{(n-2)}_l$ as 0 or $-1$.
Then the vacancy numbers $p^{(n-1)}_{j}$ for $j\geq l$
will not decrease because
$\Delta p^{(n-1)}_{j}=
\Delta Q^{(n-2)}_{j}-2\Delta Q^{(n-1)}_{j}+\Delta Q^{(n)}_{j}=
\Delta Q^{(n-2)}_{j}-2(-1)+(-1)=\Delta Q^{(n-2)}_{j}+1\geq 0$.
On the other hand, if $j<l$ then $\Delta p^{(n-1)}_{j}=0$.
This implies the well-definedness for $\nu^{(n-2)}$.

Let us consider the case $\diamondsuit =\vdom$.
Denote by $\Delta'$ the differences of the quantities
after $\delta_l^2$ minus those before $\delta_l^2$.
Then we observe that $\Delta' Q^{(n)}_l=\Delta' Q^{(n-1)}_l=-1$,
$\Delta' Q^{(n-2)}_l=-2$ and $-2\leq\Delta' Q^{(n-3)}_l\leq 0$.
Then we can use a similar discussion as in the previous case.
For example, for $j\geq l$, $\Delta'p^{(n-2)}_{j}
=\Delta'Q^{(n-3)}_{j}-2\Delta'Q^{(n-2)}_{j}
+\Delta'Q^{(n-1)}_{j}+\Delta'Q^{(n)}_{j}=
\Delta'Q^{(n-3)}_{j}-2(-2)+(-1)+(-1)=
\Delta'Q^{(n-3)}_{j}+2\geq 0$
which implies the well-definedness for $\nu^{(n-2)}$.

We remark that once the well-definedness for
$a\geq a^\diamondsuit$ is established, the rest of the
proof does not depend on the choice of $\diamondsuit$.
Especially, it is enough to consider $\delta_l$
and does not need to consider $\delta_l^2$.\bigskip\\
\underline{Case 2.}
Next let us check the well-definedness for rows of $\nu^{(a)}$
($a<a^\diamondsuit$) that are not removed by $\delta_l$.
Recall that $l^{(a-1)}\geq l^{(a)}\geq l^{(a+1)}$.
Then by using the similar arguments of the previous case
we conclude that $\Delta p^{(a)}_{j}\geq 0$ if $j\geq l^{(a)}$
and $\Delta p^{(a)}_{j}=0$ if $l^{(a+1)}>j$,
which imply the well-definedness in both cases.
Now consider the case $l^{(a)}>j\geq l^{(a+1)}$.
Let the rigging corresponding to such a length $j$ row
of $\nu^{(a)}$ be $J(\geq 0)$.
Since $l^{(a)}>j$ the row will not be removed by $\delta_l$
so that the rigging $J$ will not change.
In this case we have $\Delta p^{(a)}_{j}=-1$.
Remind that this row is not removed
although it satisfies $j\geq l^{(a+1)}$.
Thus the row is not singular
before the application of $\delta_l$
so that it satisfies $p^{(a)}_{j}-J\geq 1$.
Combining this with $\Delta p^{(a)}_{j}=-1$ we have
$p_j'^{(a)}\geq J\geq 0$.
This shows that the rigging does not exceeds
the corresponding vacancy number and also that the positivity
of the vacancy number.
\bigskip\\
\underline{Case 3.}
Consider the remaining case, i.e., rows of $\nu^{(a)}$
($a<a^\diamondsuit$) that are removed by $\delta_l$.
The point is that $\delta_l$ changes the length $l^{(a)}$ row
into $l^{(a)}-1$ row so that we have to ensure the positivity
$p'^{(a)}_{l^{(a)}-1}\geq 0$.
We do not need to consider the rigging separately
since the new rigging is taken equal to $p'^{(a)}_{l^{(a)}-1}$.
If $p^{(a)}_{l^{(a)}-1}>0$, we can use the similar arguments
of the previous case to ensure the positivity.
Therefore we assume $p^{(a)}_{l^{(a)}-1}=0$
in the rest of the proof.
If $l^{(a)}>l^{(a+1)}$ then
$p'^{(a)}_{l^{(a)}-1}=-1$ which violates the positivity.
Thus we ought to show $l^{(a)}=l^{(a+1)}$.
In order to prove this, let us show that assuming
$l^{(a)}>l^{(a+1)}$ leads to a contradiction.
To begin with, let us check that we can use Lemma
\ref{lem:convex} with $l=l^{(a)}-1$.
If there is a length $l^{(a)}-1$ row of $\nu^{(a)}$,
then it is singular (since the assumption $p^{(a)}_{l^{(a)}-1}=0$)
and it is longer than or equal to the length $l^{(a+1)}$
row of $\nu^{(a+1)}$.
Thus $\delta_l$ can remove the length $l^{(a)}-1$ row instead of
the length $l^{(a)}$ row, which is a contradiction.
Then Lemma \ref{lem:convex} combined with $p^{(a)}_{l^{(a)}-1}=0$
gives $p^{(a)}_{l^{(a)}}=p^{(a)}_{l^{(a)}-2}=0$
(recall that by definition we have $p^{(a)}_{l^{(a)}}\geq 0$
and by Corollary \ref{cor:positivity} we have
$p^{(a)}_{l^{(a)}-2}\geq 0$).
{}From $p^{(a)}_{l^{(a)}-2}=0$ we can recursively do
the same arguments and obtain the following result.
Let $j$ be the length of the longest rows
among the rows of $\nu^{(a)}$
that are strictly shorter than $l^{(a)}$.
If there is no such row, set $j=0$.
Then we have $p^{(a)}_{l^{(a)}}=p^{(a)}_{l^{(a)}-1}=\cdots
=p^{(a)}_j=0$.
In particular the length $j$ rows of $\nu^{(a)}$ are singular.
Thus the assumption $l^{(a)}>j$ forces to $l^{(a+1)}>j$
in order to avoid the removal of length $j$ row by $\delta_l$.
Then on the interval $j\leq i\leq l^{(a)}$
the following properties hold as functions of $i$:
\begin{center}
\begin{tabular}{lcl}
$Q^{(a-1)}_i$&:&upper convex function (including linear function case),\\
$Q^{(a)}_i$&:& linear function,\\
$Q^{(a+1)}_i$&:&
strictly upper convex function due to the existence of
the length\\
&& $l^{(a+1)}$ row that satisfy
$l^{(a)}>l^{(a+1)}>j$.
\end{tabular}
\end{center}
Here we have used the terminology ``strictly upper convex"
in order to express the situation that the inequality
in Lemma \ref{lem:convex} is strict.
Summing up all three contributions, we conclude that
the function $p^{(a)}_i$ is a strictly upper convex
function with respect to $i$ on this interval.
However, as we have seen, $p^{(a)}_i=0$ for
$j\leq\forall i\leq l^{(a)}$.
This is a contradiction.
Hence we have $l^{(a)}=l^{(a+1)}$ which ensures
the well-definedness of $\delta_l$ for Case 3.
\end{proof}

\begin{prop}\label{prop:Psi_stabilize}
Let us denote the image of the map $\Psi$ as
\[
\Psi :(\nu^\bullet ,J^\bullet)\longmapsto
\{ (\nu'^\bullet ,J'^\bullet),T\}.
\]
Then $\Psi$ is well-defined on $(\nu^\bullet ,J^\bullet)$ and
the image $\{ (\nu'^\bullet ,J'^\bullet),T\}$
does not depend on $n$.
Moreover, if the inequality of the condition
(\ref{eq:condition_on_rank}) on the rank $n$ is strict,
then all the boxes of $\nu^{(a^\diamondsuit -1)}$ and
$\nu^{(a^\diamondsuit)}$ are removed in exactly the same
way during the whole procedure of $\Psi$.
\end{prop}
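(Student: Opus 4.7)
The plan is to induct on the number of single-$\delta_l$ (or, for $\diamondsuit=\vdom$, $\delta_l^2$) steps that comprise $\Psi$. The induction hypothesis is that after each partial execution the rigged configuration remains in $\mathrm{RC}^\diamondsuit$, still obeys the rank condition \eqref{eq:condition_on_rank}, and retains the block structure described in Lemma \ref{lem:large_rank_RC} (with possibly shifted parameters $a_1\geq\cdots\geq a_l$). Under this hypothesis, Proposition \ref{prop:delta_weldef} immediately gives well-definedness of the next step, and hence of $\Psi$ as a whole.

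The main technical point is the verification of the induction step. Using the weight formula $\lambda_a=\sum_{b\geq a,\,i>0}iL_i^{(b)}+|\nu^{(a-1)}|-|\nu^{(a)}|$ and the chain structure $l^{(1)}\geq l^{(2)}\geq\cdots\geq l^{(a^\diamondsuit)}$ built by $\delta_l$, one checks case by case that $\ell(\wt(\nu^\bullet,J^\bullet))+\ell(\nu^{(a^\diamondsuit)})$ does not increase. Since $\delta_l$ sets the new riggings equal to the corresponding vacancy numbers and the chain is monotone, the block description of Lemma \ref{lem:large_rank_RC} carries over, merely with the thresholds $a_i$ shifted. Special attention is needed in the $\diamondsuit=\vdom$ case, where intermediate vacancy numbers of $-1$ arise and only $\delta_l^2$ truly restores an admissible configuration.

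For independence of $n$, the point is that Proposition \ref{prop:stability} ensures $\nu^{(a)}$ is a fixed stable partition $\nu^*$ for all $a\ge l^*$, and all corresponding vacancy numbers vanish. The operation $\delta_l$ never modifies data past $a^\diamondsuit$ non-trivially: in Steps~(i)--(ii) of Definition \ref{def:delta} it traverses the stable tail uniformly, and in Step~(iii) only a synchronous column of boxes there is removed. Enlarging $n$ merely lengthens the stable tail and leaves both $(\nu'^\bullet,J'^\bullet)$ (restricted to common indices) and $T$ unchanged.

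For the strict-inequality conclusion, Lemma \ref{lem:large_rank_RC} with $a^\diamondsuit>\ell(\wt)+\ell(\nu^{(a^\diamondsuit)})$ forces $a_i<a^\diamondsuit$ for every $i$, so $m_i^{(a^\diamondsuit-1)}=m_i^{(a^\diamondsuit)}$ for every $i$ (with the natural halved-column variant when $\diamondsuit=\vdom$). Then in Step~(ii) of $\delta_l$, a length-$l^{(a^\diamondsuit)}$ singular row of $\nu^{(a^\diamondsuit-1)}$ always exists and is the one selected, so the boxes removed from $\nu^{(a^\diamondsuit-1)}$ and $\nu^{(a^\diamondsuit)}$ land in the same column. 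One then checks that the strict inequality survives the step (both $\ell(\nu^{(a^\diamondsuit)})$ and $\ell(\wt)$ change by matching amounts), so this synchronous removal persists throughout $\Psi$. The main obstacle I anticipate is the bookkeeping needed to show preservation of the large-rank block structure under iteration, especially in the $\diamondsuit=\vdom$ case with its $-1$ vacancy-number subtlety, where one must trace through $\delta_l^2$ as a single atomic step rather than two applications of $\delta_l$.
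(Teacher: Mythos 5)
Your outline has the right shape, but it inverts the paper's logical order and in doing so rests on an induction invariant that does not hold. You propose to maintain, after each application of $\delta_l$, that the intermediate configuration still satisfies \eqref{eq:condition_on_rank} and still has the block structure of Lemma \ref{lem:large_rank_RC} ``with shifted thresholds,'' and then to invoke Proposition \ref{prop:delta_weldef} step by step. Neither half of that invariant survives iteration as stated. First, $\ell(\wt(\nu^\bullet,J^\bullet))$ grows during $\Psi$ (each $\delta$ adds a box to the weight, in a row as low as $k+i$ after the $i$-th application), while $\ell(\nu^{(a^\dia)})$ does not decrease until the short columns are reached; so the sum $\ell(\wt)+\ell(\nu^{(a^\dia)})$ can strictly increase and the rank condition can fail for intermediate configurations even when it held initially. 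Second, Lemma \ref{lem:large_rank_RC} is derived from the vanishing $p^{(a)}_l=0$ throughout the stable range, and this vanishing is destroyed by partial application of $\delta_l$: whenever $\delta_l$ removes length-$l$ rows from $\nu^{(j)},\nu^{(j+1)}$ but not from $\nu^{(j-1)}$, the vacancy number $p^{(j)}_l$ increases by $1$. So the lemma cannot simply be ``re-applied with shifted $a_i$''; the intermediate objects have a genuinely different singularity pattern that must be tracked by hand.

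That tracking is the actual content of the proposition, and it is missing from your proposal. The paper proves the synchronous-removal statement \emph{first} (and only then deduces well-definedness and $n$-independence from it), by showing that the $i$-th application of $\delta_l$ removes a length-$l$ box from every $\nu^{(a)}$ with $a_i'\le a\le a^\dia$ for some $a_i'\le k+i$: case (a) (all of $\nu^{(j-1)},\nu^{(j)},\nu^{(j+1)}$ touched) keeps the surviving length-$l$ rows of $\nu^{(j)}$ singular, while case (b) (the $+1$ jump in $p^{(j)}_l$) pushes $\nu^{(j)}$ out of all later removals but, using the bound $a_i-a_{i+1}\le m_i^{(a^\dia)}$ from Lemma \ref{lem:large_rank_RC} applied to the \emph{initial} configuration, guarantees that $\nu^{(j+1)}$ still has a singular length-$l$ row to remove. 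Since $k+h_l<a^\dia$, the front never reaches $a^\dia-1$, so the $l$-th columns of $\nu^{(a^\dia-1)}$ and $\nu^{(a^\dia)}$ are exhausted simultaneously. A further, separate argument is then needed at the transition from $\delta_l^{h_l}$ to $\delta_{l-1}^{h_{l-1}}$: one must check that the rows shortened by $\delta_l$ remain singular at length $l-1$ and are re-selected by the first $h_l$ applications of $\delta_{l-1}$, with the remaining $h_{l-1}-h_l$ applications handled by a fresh appeal to the initial block structure. Your proposal asserts the conclusions of these steps (``a length-$l^{(a^\dia)}$ singular row of $\nu^{(a^\dia-1)}$ always exists and is the one selected'') but supplies no mechanism for why singularity and the row counts persist; that is precisely the gap.
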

\begin{proof}
Suppose that the rank $n$ satisfies the strict inequality
$a^\diamondsuit>k+\ell (\nu^{(a^\diamondsuit)})$.
We first show the latter statement.
Then, combining $\nu^{(k+\ell(\nu^{(a^\diamondsuit)}))}=
\nu^{(k+\ell(\nu^{(a^\diamondsuit)})+1)}=\cdots
=\nu^{(a^\diamondsuit)}$
from Proposition \ref{prop:stability} (1),
the well-definedness of $\Psi$ and
the independence of the image on the rank $n$ follow.
During the proof, let $m_j^{(a)}$ be the multiplicity
corresponding to the initial $(\nu^\bullet,J^\bullet)$,
let $l=\nu^{(a^\diamondsuit)}_1$ and let $h_i$
be the height of the $i$-th column (counting from left)
of $\nu^{(a^\diamondsuit)}$.

The first step of $\Psi$ is the application of $\delta_l^{h_l}$.
Let us show that $\delta_l^{h_l}$ remove the $l$-th columns
of $\nu^{(a^\diamondsuit -1)}$ and $\nu^{(a^\diamondsuit)}$.
{}From this, we can deduce that
the positions of the removed boxes
are the same for these two partitions
since the height of two columns are both $h_l$.
{}From Lemma \ref{lem:width} we see that the longest rows of
$\nu^{(k+1)},\ldots,\nu^{(a^\diamondsuit)}$ are
length $l$ singular rows.
Moreover, from Lemma \ref{lem:large_rank_RC},
one of the followings is true; $m^{(k+1)}_l=m^{(k+2)}_l=\cdots=
m^{(a^\diamondsuit)}_l$ or $m^{(k+1)}_l<m^{(k+2)}_l<\cdots <
m^{(a_l)}_l=\cdots =m^{(a^\diamondsuit)}_l$
for some $k+1<a_l<a^\diamondsuit$.
Then we can analyze the consequences of $\delta_l^{h_l}$ as follows.
To begin with, $\delta_l^{h_l}$ can remove only length
$l$ rows of $\nu^{(k+1)},\ldots,\nu^{(a^\diamondsuit)}$
and, in particular, the first $\delta_l$ will remove
one box from each partition
$\nu^{(k+1)},\ldots,\nu^{(a^\diamondsuit)}$.
To analyze the effects of $\delta_l$ after the first one, we have to
clarify the changes of the vacancy numbers caused by $\delta_l$.
Since the removed rows have the same length,
we can do the analysis in the following way.
Let $j>k+1$.
\begin{enumerate}
\item[(a)] If length $l$ rows of $\nu^{(j-1)},\nu^{(j)},\nu^{(j+1)}$
are removed, then the vacancy number $p^{(j)}_l$
will not change.
Recall that initially we have $p^{(j)}_l=0$
for $k<j\leq a^\diamondsuit$.
Therefore we see that when we remove length $l$ rows of
$\nu^{(j-1)},\nu^{(j)},\nu^{(j+1)}$,
the initial length $l$ singular rows of $\nu^{(j)}$ remain as
singular rows of the same length if they are not removed.
\item[(b)] If length $l$ rows of $\nu^{(j)},\nu^{(j+1)}$ are
removed but $\nu^{(j-1)}$ is not removed by the $i$-th $\delta_l$, then
the vacancy number $p_l^{(j)}$ will increase by 1.
Thus if we further apply $\delta_l$,
$\nu^{(j)}$ will not be removed
since there is no singular rows of $\nu^{(j)}$
whose length are greater than or equal to $l$.
On the other hand, $\nu^{(j+1)}$ will be removed.
To show this, we have to distinguish two cases.
If initially $m^{(j)}_l=m^{(a^\diamondsuit)}_l=h_l$,
$\delta_l^{i}$ ($i<h_l$) will not exhaust the
$l$-th column of $\nu^{(j+1)}$.
The remaining length $l$ rows are still singular
as in the above case (a).
On the contrary, if initially
$m^{(j)}_l<m^{(a^\diamondsuit)}_l=h_l$,
we have moreover $m^{(j)}_l<m^{(j+1)}_l$.
Therefore, again we have remaining length $l$ singular rows of
$\nu^{(j+1)}$ which can be removed.
\end{enumerate}
To summarize we obtain the following conclusion.
When we apply $\delta_l^{h_l}$, the $i$-th $\delta_l$
removes length $l$ rows of
$\nu^{(a)},\ldots,\nu^{(a^\diamondsuit)}$ for some $a\leq k+i$.
Now consider the case $i=h_l$.
Since $a^\diamondsuit -k>\ell (\nu^{(a^\diamondsuit)})\geq h_l$,
we have $k+h_l<a^\diamondsuit$.
In particular we see that $\delta_l^{h_l}$ will remove
both $l$-th columns of
$\nu^{(a^\diamondsuit -1)}$ and $\nu^{(a^\diamondsuit)}$,
which is the desired fact.

Next let us consider $\delta_{l-1}^{h_{l-1}}$.
To begin with, we see the following facts
concerning with the interaction with the effect
of $\delta_l^{h_l}$.
Suppose that some length $l$ rows of
$\nu^{(k+1)},\ldots,\nu^{(a^\diamondsuit)}$
are removed by $\delta_l^{h_l}$ and set to be singular.
Then all such rows remain as length $l-1$ singular rows
after applications of $\delta_l^{h_l}$.
This follows from the fact that $\delta_l^{h_l}$
remove length $l$ rows of
$\nu^{(k+1)},\ldots,\nu^{(a^\diamondsuit)}$
so that such removal does not affect the vacancy numbers $p^{(j)}_{l-1}$
for $k+1\leq j\leq a^\diamondsuit$.
Therefore we see that the $i$-th $\delta_{l-1}$
($i\leq h_l$) will remove the same rows
that have been removed by the $i$-th $\delta_l$.
In other words, the $i$-th $\delta_{l-1}$ ($i\leq h_l$) removes length $l-1$ rows of
$\nu^{(a)},\ldots,\nu^{(a^\diamondsuit)}$ for some $a\leq k+i$.

Let us consider the remaining $\delta_{l-1}^{h_{l-1}-h_l}$.
Assume that $h_{l-1}>h_l$.
Then we have $m^{(a^\diamondsuit)}_{l-1}>0$.
As before, let $a_{l}(>k+1)$ be the maximal index such that
$m^{(a_{l}-1)}_l<m^{(a_{l})}_l$ happens.
If there is no such $a_{l}$, set $a_{l}=k+1$.
Then we observe the following two facts.
(1) From Lemma \ref{lem:large_rank_RC},
we have one of the followings;
$m^{(a_{l})}_{l-1}=m^{(a_{l}+1)}_{l-1}=\cdots=
m^{(a^\diamondsuit)}_{l-1}$ or $m^{(a_{l})}_{l-1}<m^{(a_{l}+1)}_{l-1}<\cdots <
m^{(a_{l-1})}_{l-1}=\cdots =m^{(a^\diamondsuit)}_{l-1}$
for some $a_l<a_{l-1}<a^\diamondsuit$.
In particular, from $a_l\leq k+h_{l}$ and $m^{(a_l)}_{l-1}\geq 0$,
we have $m^{(k+h_l+1)}_{l-1}>0$.
(2) Recall that the first $h_l$ times $\delta_{l-1}$'s
remove totally $h_l$ length $l-1$
rows from each of $\nu^{(a)}$ ($k+h_l\leq a\leq a^\diamondsuit$).
Thus the length $l-1$ rows of $\nu^{(a)}$
($k+h_l+1\leq a\leq a^\diamondsuit$) remain singular after application
of $\delta_{l-1}^{h_l}$.
{}From the observations (1) and (2),
for the latter $h_{l-1}-h_l$ times $\delta_{l-1}$,
we can use the same arguments that we have used
in the case of $\delta_l^{h_l}$.
To summarize, we see that
the $i$-th $\delta_{l-1}$ removes length $l-1$ rows of
$\nu^{(a)},\ldots,\nu^{(a^\diamondsuit)}$ for some $a\leq k+i$.
Then by $a^\diamondsuit -k>\ell (\nu^{(a^\diamondsuit)})\geq h_{l-1}$
we have $k+h_{l-1}<a^\diamondsuit$, which implies that
$\delta_{l-1}^{h_{l-1}}$ will remove both $(l-1)$-th columns of
$\nu^{(a^\diamondsuit -1)}$ and $\nu^{(a^\diamondsuit)}$.
Thus $\delta_{l-1}^{h_{l-1}}$ remove the same positions of
$\nu^{(a^\diamondsuit -1)}$ and $\nu^{(a^\diamondsuit)}$.
Recursively, we conclude that every
$\delta_l,\delta_{l-1},\ldots,\delta_1$
removes from the same positions of
$\nu^{(a^\diamondsuit -1)}$ and $\nu^{(a^\diamondsuit)}$,
which concludes the proof of the proposition.
\end{proof}

\begin{lemma}\label{lem:shape_of_LR}
Consider the map 
$\Psi :(\nu^\bullet ,J^\bullet)\longmapsto
\{ (\nu'^\bullet ,J'^\bullet),T\}$.
Then $(\nu'^\bullet ,J'^\bullet)\in {\rm RC}^\varnothing$
and the outer shape of $T$ coincides with the weight of
$(\nu'^\bullet ,J'^\bullet)$.
\end{lemma}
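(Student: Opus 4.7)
The plan is to derive both assertions by induction on the number of $\delta_l$'s (or $\delta_l^2$'s, for $\vdom$) applied during $\Psi$, using Proposition~\ref{prop:delta_weldef} at each step for well-definedness, Proposition~\ref{prop:Psi_stabilize} to pin down where the algorithm terminates, and a direct telescoping calculation to match weight changes with recording-tableau changes.

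For the first assertion, each $\delta_l$ applied during $\Psi$ preserves the axioms of a type-$\dia$ rigged configuration by Proposition~\ref{prop:delta_weldef}, so the final $(\nu'^\bullet,J'^\bullet)$ will have non-negative vacancy numbers of type $\dia$ with admissible riggings. Proposition~\ref{prop:Psi_stabilize} gives that the whole of $\nu^{(a^\dia-1)}$ and $\nu^{(a^\dia)}$ is removed by the end of $\Psi$. I will combine this with the structure of Step~(i) of Definition~\ref{def:delta}, which at every application removes one box from $\nu^{(n)}$ for $\sdom,\hdom$ and one box from one of $\nu^{(n-1)},\nu^{(n)}$ for $\vdom$, together with the stability of Proposition~\ref{prop:stability}(1) that synchronises the top levels with $\nu^{(a^\dia)}$, to conclude $\nu'^{(a)} = \emptyset$ for every $a \geq a^\dia$. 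Once the top levels are empty the type-$\dia$ vacancy-number formulas listed in Section~2.3 coincide with the $A_n^{(1)}$ formulas on the surviving range, so $(\nu'^\bullet,J'^\bullet)$ will satisfy the defining conditions of $\mathrm{RC}^\varnothing$.

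For the second assertion I will use the identity $\la_a = \sum_{b \geq a, i} i L_i^{(b)} + |\nu^{(a-1)}| - |\nu^{(a)}|$ recalled at the end of Section~2.4, noting that $\El$ is fixed throughout $\Psi$. A single $\delta_l$ outputting $k$ removes exactly one box from each of $\nu^{(k)},\nu^{(k+1)},\ldots,\nu^{(a^\dia)}$ and one further box from the top, so $\Delta \la_a = \Delta|\nu^{(a-1)}| - \Delta|\nu^{(a)}|$ telescopes to $\Delta \la_a = \delta_{a,k}$: only the $k$-th weight entry grows, and by exactly one. In parallel, $\Psi$ places a new letter at the right end of the $k$-th row of $T$, enlarging its outer shape by one box in the same row. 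Since the initial $T$ has outer shape equal to $\wt(\nu^\bullet,J^\bullet)$, an inductive argument will propagate the equality between the outer shape of $T$ and $\wt$ of the running rigged configuration through the entire run of $\Psi$.

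The hard part will be the $\vdom$ case, where the fundamental unit is $\delta_l^2$ rather than $\delta_l$ and the intermediate state between the two halves may carry a $-1$ vacancy number, as remarked before Definition~\ref{def:delta}. I will need to verify that the telescoping weight calculation goes through pair-by-pair rather than singly, and that Step~(i)(b) of Definition~\ref{def:delta}, which distinguishes $|\nu^{(n-1)}| = |\nu^{(n)}|$ from $|\nu^{(n-1)}| < |\nu^{(n)}|$, makes the two outputs of each $\delta_l^2$-block correspond cleanly to the two letters added to $T$ and to the two box removals from $\nu^{(n-1)}$ and $\nu^{(n)}$, in a way that is consistent across all such pairs and also yields $\nu'^{(n-1)}=\nu'^{(n)}=\emptyset$ at the end of $\Psi$.
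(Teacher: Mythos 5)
Your proposal is correct and follows essentially the same route as the paper: the first assertion is obtained from Propositions \ref{prop:delta_weldef} and \ref{prop:Psi_stabilize} together with the fact that $\Psi$ empties $\nu^{(a)}$ for $a\geq a^\diamondsuit$, and the second from matching each $\delta$'s output $k$ (one box removed from each of $\nu^{(k)},\ldots$) with the box added to the $k$-th row of $T$ via the weight formula $\la_a=\sum_{b\ge a,i}iL_i^{(b)}+|\nu^{(a-1)}|-|\nu^{(a)}|$. The extra care you flag for $\diamondsuit=\vdom$ is handled in the paper simply by ignoring the levels $a>a^\diamondsuit$ (cf.\ Remark \ref{rem:intermediate_LR}), which suffices since the rows of $\la$ and $T$ that matter only involve $|\nu^{(a)}|$ for $a\le a^\diamondsuit$.
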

\begin{proof}
Under the assumption (\ref{eq:condition_on_rank}) on the rank $n$,
we see from Proposition \ref{prop:Psi_stabilize} that
we can apply all $\delta$'s of $\Psi$ in well-defined manner.
Especially, from Proposition \ref{prop:delta_weldef}
we see that the image of each $\delta$
always belongs to ${\rm RC}^\diamondsuit$.
Recall that $\Psi$ will entirely remove $\nu^{(a)}$
for $a^\diamondsuit\leq a$.
Therefore the image $(\nu'^\bullet ,J'^\bullet)$
can be identified as the element of ${\rm RC}^\varnothing$.

Now we consider the statement about $T$.
Let us consider a particular step of $\Psi$,
say the application of $\delta_j$.
Then $\delta_j$ removes one box from each
$\nu^{(a)},\nu^{(a+1)},\ldots,\nu^{(n)}$ for some $a$.
On the level of the Young diagram that represents
the weight of the rigged configuration, this procedure
add a box to the $a$-th row of the Young diagram.
On the other hand, according to the definition of $\Psi$,
new $T$ is obtained by adding a box on the right of
the $a$-th row of $T$.
Since such coincidence occurs in every step of the $\Psi$,
we obtain the statement.
\end{proof}

\begin{remark}\label{rem:intermediate_LR}
{}From the above proof, we can also see that the outer shape of each
intermediate $T$ coincides with the weight of the corresponding
intermediate rigged configuration.
In particular shapes of all intermediate $T$ are Young diagrams.
Here we ignore the behavior of $\nu^{(a)}$ ($a>a^\diamondsuit$)
when $\diamondsuit =\vdom$.
\end{remark}

We prove the following technical lemma which is useful to
show that $T$ satisfies the definition of the
LR tableaux.

\begin{lemma}\label{lem:two_deltas}
Consider the application of $\delta_l^2$ on $(\nu^\bullet,J^\bullet)$
where $l=\nu^{(n)}_1$.
Then we have $l^{(a-1)}\leq l'^{(a)}$ where $a\leq a^\diamondsuit$.
Here we assume that $l^{(a)}=\infty$ (resp. $l'^{(a)}=\infty$)
if $\nu^{(a)}$ is not removed by the first (resp. second) $\delta_l$
and allow $\infty\leq\infty$.
\end{lemma}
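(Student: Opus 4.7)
The plan is to prove the inequality by downward induction on $a$, from $a = a^\dia$ down to $a = 1$. The key mechanism is that the first $\delta_l$ raises the vacancy number $p^{(a)}_i$ by exactly $1$ on the interval $i \in [l^{(a)}, l^{(a-1)} - 1]$, so every unselected row of $\nu^{(a)}$ in that length window becomes non-singular in $\nu'^{(a)}$; the modified (selected) row sits at length $l^{(a)} - 1$ with rigging equal to its new vacancy, but is too short to be a candidate for the second $\delta_l$, forcing $l'^{(a)} \ge l^{(a-1)}$.

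For the base case $a = a^\dia$, Step~(ii) of Definition~\ref{def:delta} gives $l'^{(a^\dia)} = l$ and the chain rule of $\delta_l$ gives $l^{(a^\dia - 1)} \ge l^{(a^\dia)} = l$. To obtain equality I would invoke Lemma~\ref{lem:width} (applicable under the rank condition~\eqref{eq:condition_on_rank}): it guarantees that $\nu^{(a^\dia - 1)}_1 = l$ and that this longest row is singular, so the shortest singular row of length $\ge l$ has length exactly $l$, yielding $l^{(a^\dia - 1)} = l$.

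For the inductive step, assume the inductive hypothesis $l^{(a)} \le l'^{(a+1)}$ and prove $l^{(a-1)} \le l'^{(a)}$. If $l^{(a-1)} \le l'^{(a+1)}$, then $l'^{(a)} \ge l'^{(a+1)} \ge l^{(a-1)}$. Otherwise $l'^{(a+1)} \in [l^{(a)}, l^{(a-1)} - 1]$, and I would apply the standard vacancy formula
\[
\Delta p^{(a)}_i = \Delta Q^{(a-1)}_i - 2\,\Delta Q^{(a)}_i + \Delta Q^{(a+1)}_i
\]
on this interval: here $\Delta Q^{(a-1)}_i = 0$ since $l^{(a-1)} > i$, while $\Delta Q^{(a)}_i = \Delta Q^{(a+1)}_i = -1$ since $i \ge l^{(a)} \ge l^{(a+1)}$, giving $\Delta p^{(a)}_i = 1$. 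Thus every row of $\nu^{(a)}$ whose length lies in $[l^{(a)}, l^{(a-1)} - 1]$ is either the selected row (now of length $l^{(a)} - 1 < l'^{(a+1)}$, hence not a candidate) or retains its rigging while losing singularity in $\nu'^{(a)}$, so no candidate singular row of length in $[l'^{(a+1)}, l^{(a-1)} - 1]$ exists, and $l'^{(a)} \ge l^{(a-1)}$ follows.

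The main obstacle will lie in the boundary cases. The edge $l^{(a-1)} = \infty$, where the first $\delta_l$ stops before reaching $\nu^{(a-1)}$, is handled by the same computation with $\Delta Q^{(a-1)}_i \equiv 0$, which still forces $\Delta p^{(a)}_i = 1$ for $i \ge l^{(a)}$ and hence $l'^{(a)} = \infty$. The degenerate base-case situation $a^\dia - 1 = k$ (where Lemma~\ref{lem:width} does not directly cover $\nu^{(k)}$) must be settled via Lemma~\ref{lem:large_rank_RC} to produce the required singular length-$l$ row. Finally, one should check that the vacancy-change calculation remains correct at $a = a^\dia - 1$ for each $\dia$ separately, since the first $\delta_l$ acts on $\nu^{(n-1)}$ versus $\nu^{(n)}$ differently by Step~(i) — although those removals do not appear in the vacancy formula for $\nu^{(a^\dia - 1)}$ in any type, a brief type-by-type verification using the special vacancy formulas listed after~\eqref{vacancy} confirms uniformity.
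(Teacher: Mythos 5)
Your proof is correct and follows essentially the same route as the paper's: downward induction on $a$ starting from $a=a^\dia$, with the key observation that the first $\delta_l$ raises $p^{(a)}_j$ by $1$ for $l^{(a)}\le j<l^{(a-1)}$ (and leaves the untouched riggings alone), so no singular candidate of length in $[l'^{(a+1)},\,l^{(a-1)}-1]$ survives for the second $\delta_l$, forcing $l'^{(a)}\ge l^{(a-1)}$. The only cosmetic difference is the base case, which the paper settles by citing Proposition \ref{prop:Psi_stabilize} (the two $\delta_l$'s remove the same positions of $\nu^{(a^\dia-1)}$ and $\nu^{(a^\dia)}$) rather than re-deriving $l^{(a^\dia-1)}=l=l'^{(a^\dia)}$ from Lemma \ref{lem:width} as you do.
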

\begin{proof}
As the statement about $\nu^{(a)}$
for $a^\diamondsuit <a$ is always true,
we assume $a\leq a^\diamondsuit$.
Thus the proof does not depend on $\diamondsuit$.
Also it is enough to show the statement under the strict
condition on the rank $n$ such as
$a^\diamondsuit>k+\ell (\nu^{(a^\diamondsuit)})$.
We proceed by induction on $a$.
By the condition on the rank $n$,
we can assume the shape of $(\nu^\bullet,J^\bullet)$
as described in Proposition \ref{prop:stability}.
{}From Proposition \ref{prop:Psi_stabilize},
we see that two $\delta_l$ remove the same positions of
$\nu^{(a^\diamondsuit -1)}$ and $\nu^{(a^\diamondsuit)}$.
In particular we have $l^{(a^\diamondsuit -1)}
=l'^{(a^\diamondsuit)}=l$, which proves the statement
for $a=a^\diamondsuit$.
Assume that we have $l^{(a)}\leq l'^{(a+1)}$
for some $a+1<a^\diamondsuit$.
Recall that by definition of $\delta_l$ we have
$l^{(a-1)}\geq l^{(a)}\geq l^{(a+1)}$.
Then the differences of the vacancy numbers by the first
$\delta_l$ is as follows:
$\Delta p^{(a)}_j=+1$ if $l^{(a-1)}>j\geq l^{(a)}$
and $\Delta p^{(a)}_j=0$ if $j\geq l^{(a-1)}$.
Note that if $\nu^{(a-1)}$ is not removed by the first $\delta_l$,
we can formally set $l^{(a-1)}=\infty$ and obtain the same relation.
In both cases, the length $j$ ($l^{(a-1)}>j\geq l^{(a)}$)
rows of $\nu^{(a)}$ are non-singular and cannot be
removed by the second $\delta_l$.
On the other hand, combining the assumed inequality $l'^{(a+1)}\geq l^{(a)}$
with $l'^{(a)}\geq l'^{(a+1)}$ that follows from
the definition of $\delta_l$, we have $l'^{(a)}\geq l^{(a)}$.
Therefore $l'^{(a)}$, i.e., the length of the row of $\nu^{(a)}$
that is removed by the second $\delta_l$
should satisfy $l'^{(a)}\geq l^{(a-1)}$.
Hence we complete the proof.
\end{proof}

\begin{cor}\label{cor:vertical_strip}
Let us consider intermediate steps of $\Psi$.
Let the recording tableau before (resp. after)
the application of $\delta_i^{h_i}$ be
$T$ (resp. $T'$).
Then the difference between $T'$ and $T$,
i.e., $T'\setminus T$,
forms the vertical strip of cardinality $h_i$
and, moreover, the letters contained in the strip
are $1,2,\ldots,h_i$ from top to bottom.
\end{cor}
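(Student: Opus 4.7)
The plan is to reduce the corollary to an iterated application of Lemma \ref{lem:two_deltas}. Let $k_j$ denote the value returned by the $j$-th application of $\delta_i$ inside $\delta_i^{h_i}$, i.e., the smallest index $a$ such that a row of $\nu^{(a)}$ is removed by that $\delta_i$; by Definition \ref{def:Psi} this means the letter $j$ is appended to row $k_j$ of the current recording tableau. If I can show $k_1<k_2<\cdots<k_{h_i}$, then the $h_i$ new cells sit in $h_i$ distinct rows, which (together with Remark \ref{rem:intermediate_LR} guaranteeing that the shape stays a Young diagram) forces $T'\setminus T$ to be a vertical strip, and the strict increase $k_1<\cdots<k_{h_i}$ places the letters $1,2,\ldots,h_i$ from top to bottom.

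Before invoking Lemma \ref{lem:two_deltas}, I would verify that the hypothesis $\nu^{(n)}_1=i$ of that lemma holds at every step inside $\delta_i^{h_i}$. At the start of $\delta_i^{h_i}$ this follows from Proposition \ref{prop:Psi_stabilize}: the preceding blocks $\delta_l^{h_l},\delta_{l-1}^{h_{l-1}},\ldots,\delta_{i+1}^{h_{i+1}}$ remove the $i{+}1,\ldots,l$ columns entirely from $\nu^{(a^\diamondsuit-1)}$ and $\nu^{(a^\diamondsuit)}$, leaving $\nu^{(a^\diamondsuit)}_1=i$; stability (Lemma \ref{lem:width}, applied to the current intermediate RC) then gives $\nu^{(n)}_1=\nu^{(a^\diamondsuit)}_1=i$. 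After the $j$-th $\delta_i$ with $j<h_i$, the partition $\nu^{(a^\diamondsuit)}$ still has $h_i-j\ge 1$ rows of length $i$, so $\nu^{(a^\diamondsuit)}_1=i$ and again $\nu^{(n)}_1=i$.

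The decisive step is then Lemma \ref{lem:two_deltas} applied to each consecutive pair of $\delta_i$'s. By the definition of $k_j$, no row of $\nu^{(k_j-1)}$ is removed by the $j$-th $\delta_i$, i.e., $l^{(k_j-1)}=\infty$ in the notation of that lemma. The conclusion $l'^{(k_j)}\ge l^{(k_j-1)}=\infty$ forces $l'^{(k_j)}=\infty$, meaning the $(j{+}1)$-th $\delta_i$ does not touch $\nu^{(k_j)}$; hence $k_{j+1}\ge k_j+1$. Iterating for $j=1,\ldots,h_i-1$ yields the required chain $k_1<k_2<\cdots<k_{h_i}$, and the corollary follows.

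The only genuine obstacle here is bookkeeping: confirming that the stability hypothesis $\nu^{(n)}_1=i$ is maintained throughout $\delta_i^{h_i}$ so that Lemma \ref{lem:two_deltas} actually applies to each consecutive pair. Once that is in hand, the argument is a direct translation of ``$l^{(k_j-1)}=\infty\Rightarrow l'^{(k_j)}=\infty$'' into ``the next stopping row is strictly deeper'', and the vertical-strip statement with the prescribed labeling is immediate.
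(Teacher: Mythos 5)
Your proof is correct and follows essentially the same route as the paper: both arguments apply Lemma \ref{lem:two_deltas} to each consecutive pair of $\delta_i$'s to conclude that the stopping row strictly increases (so the new cells occupy distinct rows with labels increasing downward), and then invoke the fact that intermediate shapes are Young diagrams (Lemma \ref{lem:shape_of_LR}/Remark \ref{rem:intermediate_LR}) to get a vertical strip. Your explicit check that the hypothesis $l=\nu^{(n)}_1$ of Lemma \ref{lem:two_deltas} persists throughout $\delta_i^{h_i}$ is a welcome piece of bookkeeping that the paper leaves implicit, but it does not change the argument.
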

\begin{proof}
Look at a pair of successive two $\delta_i$'s and apply
the previous Lemma \ref{lem:two_deltas} for
the corresponding intermediate rigged configuration.
Let us use symbols $l^{(a)}$ and $l'^{(a)}$ for
positions of boxes that are removed by $\delta_i$'s.
Suppose that $l^{(a)}<\infty$ and $l^{(a-1)}=\infty$.
Then we add a letter, say $j$, to the $a$-th row of
the recording tableau $T$.
{}From Lemma \ref{lem:two_deltas}, we have $l^{(a-1)}\leq l'^{(a)}$,
which forces $l'^{(a)}=\infty$.
In other words, the second $\delta_i$ cannot
remove from $\nu^{(a)}$.
Thus we have to put the letter $j+1$ to a row
that is strictly lower than the $a$-th row of $T$.
To summarize, $T'\setminus T$ contains at most
one element for each row, and if we read $T'\setminus T$
from top to bottom it reads $1,2,\ldots,h_i$.
Now recall from Lemma \ref{lem:shape_of_LR}
that shapes of all the intermediate recording tableaux
coincide with the corresponding
intermediate rigged configurations.
In particular their shapes are the Young diagrams.
Therefore $T'\setminus T$ forms a vertical strip
of cardinality $h_i$.
\end{proof}

\begin{lemma}\label{lem:comparison_LR}
Consider the applications of $\delta_{l}^{h_{l}}$
and $\delta_{l-1}^{h_{l-1}}$ of $\Psi$ for some $l$.
Let the positions (row, column) of the letter $c$ of $T$
associated with $\delta_{l}^{h_{l}}$
(resp. $\delta_{l-1}^{h_{l-1}}$) be $(i,j)$ (resp. $(i',j')$).
Then we have $i'\leq i$ and $j'>j$.
In other words, we have the following two possibilities;
$(i',j')$ is on the right of $(i,j)$, or
$(i',j')$ is strictly above and
strictly right of $(i,j)$.
\end{lemma}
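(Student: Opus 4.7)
The plan is to establish $i' = i$ and $j' > j$, so that the first of the two listed possibilities actually occurs under the standing rank hypothesis \eqref{eq:condition_on_rank}. Let $k_c^{(l)}$ denote the stopping row of the $c$-th application of $\delta_l$ inside the block $\delta_l^{h_l}$, and $k_c^{(l-1)}$ the analogous stopping row for $\delta_{l-1}^{h_{l-1}}$. By the definition of $\Psi$, letter $c$ is placed in row $i = k_c^{(l)}$ of $T$ during the first block and in row $i' = k_c^{(l-1)}$ during the second block. The statement needs to be proved only for $1 \le c \le h_l$, since $h_{l-1} \ge h_l$ and this is the range where a letter $c$ occurs in both blocks.

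The equality $k_c^{(l-1)} = k_c^{(l)}$ is exactly what is established inside the proof of Proposition~\ref{prop:Psi_stabilize}: after running $\delta_l^{h_l}$, each row that was shortened from length $l$ to length $l-1$ becomes singular, and the first $h_l$ applications of $\delta_{l-1}$ traverse precisely the same chains of rows as the corresponding applications of $\delta_l$. This immediately yields $i' = i$.

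For the column comparison I will invoke Corollary~\ref{cor:vertical_strip} separately for each of the two blocks. Applied to $\delta_l^{h_l}$, it says the newly placed letters $1,2,\ldots,h_l$ land in strictly increasing rows $k_1^{(l)} < k_2^{(l)} < \cdots < k_{h_l}^{(l)}$, so row $i = k_c^{(l)}$ receives exactly one letter in this block and its length grows from the initial $\lambda_i$ to $j = \lambda_i + 1$. When we enter the $\delta_{l-1}^{h_{l-1}}$ block and reach its $c$-th step, the letters $1, \ldots, c-1$ of that block have already been placed in rows $k_1^{(l-1)} < \cdots < k_{c-1}^{(l-1)} < k_c^{(l-1)} = i$, all strictly above row $i$; consequently, row $i$ has not been extended since the end of the $\delta_l^{h_l}$ block. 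Placing letter $c$ of the $\delta_{l-1}^{h_{l-1}}$ block therefore occupies column $j+1$, so $j' = j+1 > j$.

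The main subtlety is confirming that Proposition~\ref{prop:Psi_stabilize} and Corollary~\ref{cor:vertical_strip} can be invoked at the intermediate stage, after $\delta_l^{h_l}$ has modified the rigged configuration. This reduces to checking that the rank hypothesis \eqref{eq:condition_on_rank} persists through every step of $\Psi$: each $\delta$ only removes boxes, so $\ell(\nu^{(a^\diamondsuit)})$ is non-increasing, and the tracking of weights in Lemma~\ref{lem:shape_of_LR} keeps the quantity $\ell(\mathrm{wt}(\nu^\bullet,J^\bullet)) + \ell(\nu^{(a^\diamondsuit)})$ within the allowed bound. The second possibility in the lemma, $i' < i$ and $j' > j$, does not arise under the present hypothesis but is recorded for uniformity with later arguments in which only the weaker conclusion is needed.
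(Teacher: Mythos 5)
Your proposal hinges on the claim that $i'=i$ always, i.e., that the $c$-th application of $\delta_{l-1}$ stops at exactly the same row as the $c$-th application of $\delta_l$. This is false, and the paper's own Example \ref{ex:Psi} is a counterexample: there $l=3$, and the letter $c=1$ of the $\delta_3^{h_3}$ block is recorded at position $(3,2)$ of $T$, while the letter $c=1$ of the $\delta_2^{h_2}$ block is recorded at $(1,3)$, so $i'=1<3=i$. The source of the error is a misreading of Proposition \ref{prop:Psi_stabilize}: what is shown there (and re-derived at the start of the paper's proof of this lemma) is that the $c$-th $\delta_{l-1}$ removes a box from \emph{at least} the same rows as the $c$-th $\delta_l$ — those rows remain singular, so the chain must pass through them — but the chain may then continue into partitions $\nu^{(a)}$ with smaller $a$ that the $c$-th $\delta_l$ never reached. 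Since moving left in the rigged configuration corresponds to moving up in $T$, the stopping row can strictly decrease, which is precisely why the lemma asserts only $i'\le i$ and lists two possibilities. Your closing remark that the second possibility ``does not arise under the present hypothesis'' is therefore incorrect.

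Because the column comparison in your argument is derived from $i'=i$ (``row $i$ has not been extended since the end of the $\delta_l^{h_l}$ block, so $j'=j+1$''), it collapses in the case $i'<i$: the letter $c$ of the second block then lands in a different, higher row, and nothing in your argument controls the length of \emph{that} row relative to $j$. The paper handles this by a finer bookkeeping: it shows that once a row is removed by some $\delta_l$ it stays singular and its vacancy number is unchanged by the remaining $\delta_l$'s, that the $c$-th $\delta_{l-1}$ re-removes exactly those rows (plus possibly leftward ones), and that this re-removal leaves the vacancy numbers of the rows touched by the later $\delta_l$'s intact — from which both $i'\le i$ and $j'>j$ are extracted recursively. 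Your invocation of Corollary \ref{cor:vertical_strip} for each block separately is fine as far as it goes, but it cannot substitute for this cross-block comparison.
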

\begin{proof}
We argue according to the computations of $\Psi$.
Thus $\delta_{l}^{h_{l}}$ appears first and
$\delta_{l-1}^{h_{l-1}}$ appears next.
Let us look at a particular pair $\delta_{l}^{2}$.
Denote the length of the row of $\nu^{(a)}$
that is removed by the first (resp. second)
$\delta_l$ by $l^{(a)}$ (resp. $l'^{(a)}$).
Let us tentatively call the row that
is removed by the first $\delta_l$ by $A$.
Then we observe the following two facts.
\begin{enumerate}
\item
If the row $A$ is removed by the first $\delta_l$,
then it will not be removed by all the remaining $\delta_l$.
In particular, the rigging of the row $A$ will not change
after removed by the first $\delta_l$.
To show this, let us look at the second $\delta_l$.
Recall that from Lemma \ref{lem:two_deltas}
we have $l^{(a)}\leq l'^{(a+1)}$ and from definition of
$\delta_l$ we have $l'^{(a)}\geq l'^{(a+1)}$,
thus $l'^{(a)}\geq l^{(a)}$.
On the other hand the length of the row $A$ after the first $\delta_l$,
thus it is $l^{(a)}-1$ that is strictly shorter than $l'^{(a)}$.
Hence it will not be removed by the second $\delta_l$.
Recursively we can show the claim.
\item
After the first $\delta_l$, the vacancy number for
the row $A$ will not be changed by the succeeding $\delta_l$'s.
To show this,
note that the length of the row $A$ after the first $\delta_l$ is $l^{(a)}-1$.
By Lemma \ref{lem:two_deltas}
we have $l^{(a)}\leq l'^{(a+1)}$ and from definition of
$\delta_l$ we have $l'^{(a-1)}\geq l'^{(a)}\geq l'^{(a+1)}$
so that the second $\delta_l$ will not change the vacancy
number for the row $A$.
Recursively we can show the claim.
\end{enumerate}
To summarize, once a row is removed by some $\delta_l$,
then the row is kept as singular after applications of
all the remaining $\delta_l$'s.
For the sake of the later arguments, we observe that
from $l^{(a-1)}\leq l'^{(a)}$ and
$l^{(a-1)}\geq l^{(a)}\geq l^{(a+1)}$,
we have $l^{(a-1)},l^{(a)},l^{(a+1)}\leq l'^{(a)}$.

Let us consider $\delta_{l-1}^{h_{l-1}}$.
To begin with, let us compare the locations of two letters 1 of $T$
corresponding to the first $\delta_l$ and $\delta_{l-1}$.
According to the conclusion in the previous paragraph,
we see that the first $\delta_{l-1}$ will remove a box from the same
rows that are removed by the first $\delta_l$.
Furthermore the first $\delta_{l-1}$ may remove
from leftward partitions that are not removed by
the first $\delta_l$.
Since the ``left" of the rigged configurations corresponds to
the ``up" of the $T$,
we obtain the statement in this case.
Now we shall check that the box removing procedure of the
first $\delta_{l-1}$ will not change the vacancy numbers for
the rows that are removed by $\delta_l$'s except for the
first $\delta_l$.
For this, remind that the first $\delta_{l-1}$ removes a box from
the same rows that are removed by the first $\delta_l$.
Thus we can use the inequality at the end of the last paragraph
to infer the property.
In fact, we have
$\Delta Q^{(a-1)}_{l'^{(a)}-1}=\Delta Q^{(a)}_{l'^{(a)}-1}=
\Delta Q^{(a+1)}_{l'^{(a)}-1}=-1$,
hence $\Delta p^{(a)}_{l'^{(a)}-1}=0$.
Thus we can use the same arguments to compare the second
$\delta_{l-1}$ with the second $\delta_l$ to check the statement
for this case.
Recursively, we can check the statement for all $\delta_{l-1}$,
and again recursively we can conclude the proof of the lemma.
\end{proof}

\begin{prop}\label{prop:yamanouchi}
Consider the map $\Psi :(\nu^\bullet,J^\bullet)
\longmapsto\{(\nu'^\bullet,J'^\bullet),T\}$.
Then the row word of $T$ satisfies the Yamanouchi condition.
\end{prop}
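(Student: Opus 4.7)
The plan is to prove the Yamanouchi condition by exhibiting, for each positive integer $c$, an injective matching from the $(c+1)$-entries of $T$ to the $c$-entries of $T$ such that every matched $c$ appears strictly earlier than its image $c+1$ in the reverse row word. Once such a matching exists for every $c$, any prefix of the reverse row word containing $k$ occurrences of $c+1$ must contain the $k$ matched occurrences of $c$, giving $\#c\text{'s}\ge\#(c+1)\text{'s}$ in every prefix, which is precisely the Yamanouchi condition.

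The matching is read off directly from the batch decomposition of $T$. By Corollary \ref{cor:vertical_strip}, the entries of $T$ are partitioned into vertical strips $B_l$ (one per application of $\delta_l^{h_l}$ in the definition of $\Psi$, for $l=\nu^{(a^\diamondsuit)}_1,\ldots,1$), and each $B_l$ contains exactly the letters $1,2,\ldots,h_l$ arranged from top to bottom. Because $B_l$ is a vertical strip it has at most one box in each row, so the letter $i$ in $B_l$ sits in a row $r_i^{(l)}$ with $r_1^{(l)}<r_2^{(l)}<\cdots<r_{h_l}^{(l)}$. In particular, inside a fixed $B_l$ the letter $c$ lies strictly above the letter $c+1$ whenever $h_l\ge c+1$.

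Now define $\phi_c$ by sending each entry $c+1\in B_l$ (which forces $h_l\ge c+1\ge c$, so $B_l$ contains a unique letter $c$) to that letter $c$ in the same batch $B_l$. The strips $B_l$ partition the entries of $T$ and each contributes at most one source and one target of $\phi_c$, so $\phi_c$ is injective. By the paragraph above, $\phi_c(c+1)$ sits in a strictly smaller row than $c+1$, hence appears strictly earlier in the reverse row word (which reads rows from top to bottom, and right to left within a row). This gives the required matching and finishes the proof.

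I do not anticipate a significant obstacle: once Corollary \ref{cor:vertical_strip} is in hand, the Yamanouchi property reduces to the trivial observation that each vertical strip is labeled top-to-bottom by $1,2,\ldots,h_l$, so the matching is internal to each batch and requires no comparison across batches (the horizontal displacement guaranteed by Lemma \ref{lem:comparison_LR} plays no role here and is used only for the semistandardness argument elsewhere). The only point deserving a line of care is the convention for the reverse row word: one should note that ``strictly higher row'' unambiguously implies ``earlier in the reverse row word,'' independent of column positions.
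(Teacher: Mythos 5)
Your proof is correct, and it takes a genuinely different (and leaner) route than the paper's. The paper proves the proposition by induction on the batches: assuming the word built from the columns $h_l,\ldots,h_{j+1}$ is Yamanouchi, it invokes both Corollary \ref{cor:vertical_strip} and Lemma \ref{lem:comparison_LR} to argue that each letter $i$ contributed by the column $h_j$ is inserted to the left of every $i$ already present, and concludes that the Yamanouchi property is preserved at each insertion step. You instead give a one-shot pairing argument: match each $c+1$ to the $c$ in the same vertical strip, observe that the strip structure of Corollary \ref{cor:vertical_strip} forces that $c$ to sit in a strictly higher row and hence strictly earlier in the reverse row word, and deduce the prefix inequality from injectivity of the matching. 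This buys two things: it makes the verification of the lattice-word condition completely explicit (the paper's ``therefore insertion \ldots does not violate the Yamanouchi condition'' quietly relies on the new $1,2,\ldots,h_j$ also arriving in order, which your matching handles automatically since it is internal to a single strip), and it shows that Lemma \ref{lem:comparison_LR} is not needed for this proposition at all --- it is only needed for the semistandardness of $T$ in the proof of Theorem \ref{Psi_weldef}. The only ingredient you use beyond Corollary \ref{cor:vertical_strip} is the reading convention for the reverse row word (rows top to bottom, right to left within a row), which you correctly flag and which matches the paper's example.
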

\begin{proof}
Consider the subset of $T$ corresponding to the columns
$h_l,h_{l-1},\ldots,h_j$ and proceed by induction on $j$.
For the initial case $j=l$,
we see from Corollary \ref{cor:vertical_strip}
that the row word is $123\cdots h_l$
which is the Yamanouchi word.
Assume that we have done for $h_l,h_{l-1},\ldots,h_{j+1}$.
Let the row word corresponding to the subset of $T$
corresponding to $h_l,h_{l-1},\ldots,h_{j+1}$ be $w$.
By the induction assumption $w$ is a Yamanouchi word.
As we see in Corollary \ref{cor:vertical_strip}, the column $h_j$
will insert integers $1,2,\ldots,h_j$ to $w$ in this order.
Let us rephrase Lemma \ref{lem:comparison_LR}
in the row word language. 
Then we see that each letter $i$
coming from the column $h_j$ will be inserted to
the left of all $i$'s contained in $w$.
Therefore insertion of the integers coming from
the column $h_j$ does not violate the Yamanouchi condition.
\end{proof}

\begin{proof}[Proof of Theorem \ref{Psi_weldef}.]
By Lemma \ref{lem:shape_of_LR}, $(\nu'^\bullet,J'^\bullet)$
is the type $\diamondsuit =\emptyset$ rigged configuration
and the outer shape of $T$ coincides with the weight
of $(\nu'^\bullet,J'^\bullet)$.
Moreover, from definition of $\Psi$ we see that
the inner shape of $T$ coincides with the weight
of $(\nu^\bullet,J^\bullet)$ and from Corollary
\ref{cor:vertical_strip} we see that the weight of $T$
coincides with $\nu^{(a^\diamondsuit)}$.
Thus we have only to check that the resulting $T$
indeed satisfies the definition of the LR tableau.
Recall that the heights of columns of $\nu^{(a^\diamondsuit)}$
satisfy $h_j\geq h_{j+1}$.
Recall also that the map $\Psi$ proceeds from $h_l$ to $h_1$.

Let us check that $T$ is a semi-standard tableau.
Fix a particular entry of $T$, say $\alpha$.
Assume that $\alpha$ corresponds to the column $h_j$.
Denote the possible its neighbours as
in the following diagram.
\[
\young(\alpha\beta,\gamma)
\]
If $\beta\neq\emptyset$, let us check $\alpha\leq\beta$.
Consider the next column $h_{j-1}$.
According to Lemma \ref{lem:comparison_LR}
there are two possibilities.
The first case is $\alpha=\beta$,
which immediately gives the statement.
The other case is that when computing the column $h_{j-1}$,
$\alpha$ appears strictly above and right
of the previous $\alpha$.
Then $\beta$ in the above diagram corresponds to
the column $h_i$ for some $i<j$.
By recursively using Lemma \ref{lem:comparison_LR}
we see that $\alpha$ corresponding to the column $h_i$
appears strictly above and right
of the $\alpha$ corresponding to the column $h_j$.
Recall from Corollary \ref{cor:vertical_strip} that
the entries corresponding to the column $h_i$ forms
a vertical strip and, moreover, its entry is strictly
increasing integer sequence if we read from top.
Within the vertical strip corresponding to the column
$h_i$, $\beta$ in the above diagram appears
strictly below $\alpha$.
Thus we have $\alpha <\beta$, which gives the statement
in this case.
Next, let us check $\alpha<\gamma$ if $\gamma\neq\emptyset$.
By Remark \ref{rem:intermediate_LR}, shape of each subset of
$T$ corresponding to $h_l,h_{l-1},\ldots,h_j$
is a Young diagram.
Thus $\gamma$ in the above diagram corresponds to
some column $h_i$ for some $i\leq j$.
If $\gamma$ corresponds to the same column $h_{j}$,
then we have $\gamma=\alpha+1>\alpha$,
which implies the statement in this case.
On the contrary, suppose that $\gamma$ corresponds to some
column $h_i$ ($i<j$).
In this case, again we can compare $\alpha$ and $\gamma$
within the vertical strip corresponding to the column $h_i$
to show $\alpha<\gamma$.
To summarize, we have checked that
$T$ is a semi-standard tableau.

Finally, from Proposition \ref{prop:yamanouchi}
we see that the row word of $T$ satisfies
the Yamanouchi condition.
Thus $T$ is the LR tableau.
\end{proof}

\subsection{Proof of Theorem \ref{Psi-1_weldef}}
During this section, 
we denote by $l^{(a)}$ the length of
the row of $\nu'^{(a)}$ to which $\tilde{\delta}_k$ has added a box.
In other words, $l^{(a)}$ is the position of the added box.
Then we have $l^{(a-1)}\geq l^{(a)}\geq l^{(a+1)}$.
Again, during the computations of $\tilde{\Psi}$
the quantum space does not change.
Therefore we can neglect the effect of the quantum space
when the arguments concern only with the differences of
the vacancy numbers.

\begin{prop}\label{prop:delta-1_weldef}
For $\diamondsuit =\hdom,\sdom$
$\tilde{\delta}_k$ is well-defined.
For $\diamondsuit =\vdom$
$\tilde{\delta}_k^2$ is well-defined.
\end{prop}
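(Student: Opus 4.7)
The plan is to mirror the three-case structure of the proof of Proposition \ref{prop:delta_weldef}, but tracking how the vacancy numbers respond to \emph{adding} boxes rather than removing them. If a box is added at position $l^{(a)}$ (so the chosen row of $\nu^{(a)}$ has length $l^{(a)}-1$ before the move), then $\Delta Q_j^{(a)}=1$ iff $j\geq l^{(a)}$ and $0$ otherwise. Combining this for three consecutive ranks and using the chain $l^{(a+1)}\leq l^{(a)}\leq l^{(a-1)}$ forced by Step (i), one computes
\[
\Delta p_j^{(a)}=\begin{cases}0&j\geq l^{(a-1)}\\ -1&l^{(a-1)}>j\geq l^{(a)}\\ +1&l^{(a)}>j\geq l^{(a+1)}\\ 0&j<l^{(a+1)}\end{cases}
\]
on the generic ``$A$-type'' interior, with the obvious modifications at $a\in\{n-1,n\}$ dictated by Table \ref{tab:diamond}.

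For rows of $\nu^{(a)}$ ($a\leq a^\diamondsuit$) that are not modified, the only dangerous range is $l^{(a-1)}>j\geq l^{(a)}$, where the vacancy drops by $1$; but by the ``longest singular row'' selection rule, any row of $\nu^{(a)}$ with length in this range is strictly non-singular before the move, so its rigging $J$ satisfies $J\leq p^{(a)}_j-1\leq p'^{(a)}_j$ afterwards. For the row that is actually extended, the new rigging is set to $p'^{(a)}_{l^{(a)}}$, so I must check $p'^{(a)}_{l^{(a)}}\geq 0$. When the singular row existed with positive length $l^{(a)}-1$, its rigging equals $p^{(a)}_{l^{(a)}-1}\geq 0$, and the only worrisome case is $l^{(a-1)}>l^{(a)}$ giving $\Delta p^{(a)}_{l^{(a)}}=-1$. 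This is precisely the mirror of Case 3 in the proof of Proposition \ref{prop:delta_weldef}, and I expect it to be the main obstacle: assuming $p'^{(a)}_{l^{(a)}}<0$ leads via Lemma \ref{lem:convex} (plus the fact that no singular row of length exceeding the one in $\nu^{(a+1)}$ can exist, by the ``longest'' selection rule applied one rank up) to a strictly upper-convex function forced to be identically zero on an interval, yielding a contradiction. The edge case of adding to a ``length-$0$'' row is handled by Corollary \ref{cor:positivity} applied one step later.

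The behavior for $a>a^\diamondsuit$ is fixed by Step (iv), which sets $J'^{(a)}=0$ and, together with the stability structure of Proposition \ref{prop:stability}, requires the resulting vacancy numbers to vanish. This is where the distinction between $\diamondsuit\in\{\hdom,\sdom\}$ and $\diamondsuit=\vdom$ enters. For $\hdom,\sdom$ a single $\tilde{\delta}_k$ preserves (or re-establishes) the stability conditions $\nu'^{(n-1)}=\nu'^{(n)}$, so all vacancies at $a=n-1,n$ are $0$. For $\diamondsuit=\vdom$ a single $\tilde{\delta}_k$ adds a box to exactly one of $\nu^{(n-1)},\nu^{(n)}$ (the one determined by the relative sizes in Step (ii.b)), momentarily breaking $|\nu^{(n-1)}|=|\nu^{(n)}|$; a direct computation then shows $p'^{(n)}_l=-1$ with rigging $0$, which is permitted by the generalized convention announced in the preamble to Definition \ref{def:delta}. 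The second application of $\tilde{\delta}_k$ must, by Step (ii.b), add a box to the other of $\nu^{(n-1)},\nu^{(n)}$, restoring the balance and bringing $p^{(n)}_l$ back to $0$. This proves $\tilde{\delta}_k^2$ is well-defined in the $\vdom$ case, while the already established Cases (B) and (C) give well-definedness of $\tilde{\delta}_k$ in the $\hdom,\sdom$ cases.
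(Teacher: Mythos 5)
Your proposal follows the paper's own proof essentially step for step: the same sign-flipped table of vacancy-number changes under box addition, the same use of non-singularity ($p^{(a)}_j-J>0$) to absorb the $-1$ drop on untouched rows in the range $l^{(a-1)}>j\ge l^{(a)}$, the same strict-convexity contradiction for the row actually extended, and the same bookkeeping at ranks $n-1,n$ for $\dia=\vdom$ justified by the ``vacancy $-1$ with rigging $0$'' convention and restored by the second $\tilde{\delta}_k$. The one slip is directional: for $\tilde{\delta}$ the selection rule constrains singular rows of $\nu^{(a)}$ relative to the already-chosen row of $\nu^{(a-1)}$ (not $\nu^{(a+1)}$, which is processed later), so in the Case-3 contradiction the strictly upper convex contribution must come from $Q^{(a-1)}_i$ while $Q^{(a+1)}_i$ is only weakly convex --- the exact mirror image of the situation for $\delta$.
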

\begin{proof}
Again we have to check the positivity of the vacancy numbers
and the inequality that the riggings are less than or
equal to the corresponding vacancy numbers.
During the proof, we use $p_l'^{(a)}$ to express the vacancy
numbers with respect to the image of $\tilde{\delta}_k$ and
the symbol $\Delta$ to express the differences
of the quantities for after $\tilde{\delta}_k$
minus before $\tilde{\delta}_k$.\bigskip

\noindent
\underline{Case 1.}
In order to check the well-definedness for $\nu^{(a)}$
($a\geq a^\diamondsuit$), we can use case by case arguments
similar to those appeared in Proposition \ref{prop:delta_weldef}.
The rest of the proof does not depend on the specific choice of
$\diamondsuit$.
\bigskip

\noindent
\underline{Case 2.}
Next we check the well-definedness for rows of $\nu^{(a)}$
($a<a^\diamondsuit$)
that are not added by $\tilde{\delta}_k$.
Assume $l^{(a)}=\infty$ if $\nu^{(a)}$ is not
added by $\tilde{\delta}_k$.
Let $j$ be the length of a row of $\nu^{(a)}$.
Then $\Delta p^{(a)}_j$ behaves as follows:
$\Delta p^{(a)}_j=+1$ if $l^{(a)}>j\geq l^{(a+1)}$,
$\Delta p^{(a)}_j=-1$ if $l^{(a-1)}>j\geq l^{(a)}$
and $\Delta p^{(a)}_j=0$ otherwise.
Thus the only case that may cause difficulty is
$l^{(a-1)}>j\geq l^{(a)}$.
Let us call the length $j$ row by $A$
and let the corresponding rigging be $J(\geq 0)$.
By assumption the row $A$ is not added by $\tilde{\delta}_k$
so it was not singular before application of $\tilde{\delta}_k$,
i.e., $p^{(a)}_j-J>0$.
Thus even if we have $\Delta p^{(a)}_j=-1$,
we have $p'^{(a)}\geq J\geq 0$,
which assures the well-definedness.
\bigskip

\noindent
\underline{Case 3.}
Consider the remaining case, i.e.,
the rows of $\nu^{(a)}$ ($a<a^\diamondsuit$)
that are added by $\tilde{\delta}_k$.
In particular, we have to consider the case
$p^{(a)}_{l^{(a)}}=0$ before application of $\tilde{\delta}_k$.
If $l^{(a-1)}=l^{(a)}$, there is no $j$ such that
$\Delta p^{(a)}_j<0$ so that the well-definedness is assured.
Thus we assume that $l^{(a-1)}>l^{(a)}$ and lead to a contradiction.
If there is a length $l^{(a)}$ row of $\nu^{(a)}$,
then from $p^{(a)}_{l^{(a)}}=0$ and $l^{(a-1)}>l^{(a)}$,
$\tilde{\delta}_k$ would add a box to the length $l^{(a)}$ row.
This contradicts with the fact that $\tilde{\delta}_k$
add a box to a length $l^{(a)}-1$ row.
Thus there is not a length $l^{(a)}$ row of $\nu^{(a)}$.
Denote by $S$ the shortest row among the rows of $\nu^{(a)}$
that are strictly longer than $l^{(a)}-1$.
Let the length of the row $S$ be $s$.
If there is no such a row, assume $S=\emptyset$ and $s=\infty$.
Then from Lemma \ref{lem:convex} we see that
$p^{(a)}_{l^{(a)}-1}=p^{(a)}_{l^{(a)}}=\cdots
=p^{(a)}_{s}=0$ (see the similar arguments in
Proposition \ref{prop:delta_weldef}).
If $l^{(a-1)}\geq s$ then $\tilde{\delta}_k$ would add
a box to the row $S$ which is the contradiction.
Therefore we see that $l^{(a-1)}<s$.
Thus, for $l^{(a)}\leq i\leq s$, we have the following
properties as functions of $i$:
\begin{center}
\begin{tabular}{lcl}
$Q^{(a-1)}_i$&:&
strictly upper convex function due to the existence of
the length\\
&& $l^{(a-1)}$ row that satisfies
$l^{(a)}<l^{(a-1)}<s$.\\
$Q^{(a)}_i$&:& linear function,\\
$Q^{(a+1)}_i$&:&
upper convex function (including linear function case).
\end{tabular}
\end{center}
Combining these three contributions, we conclude
that $p^{(a)}_i$ is a strictly upper convex function
on the interval $l^{(a)}\leq i\leq s$.
However, as we have seen, $p^{(a)}_i=0$ holds
on this interval. This is the contradiction.
Hence we conclude $l^{(a-1)}=l^{(a)}$
which completes the proof of the proposition.
\end{proof}

\begin{lemma}\label{lem:twodelta-1s}
We follow the description at Definition \ref{def:Psi-1}.
According to Step (i) we fix a group of letters contained
in $T$ and consider a pair of successive two integers within the group.
Let the two letters of the pair be the $k$-th and the $k'$-th
rows of $T$ ($k>k'$), respectively.
Let us consider
$\tilde{\delta}_{k'}\circ\tilde{\delta}_{k}$.
Let $l^{(a)}$ (resp. $l'^{(a)}$)
be the column of the added box by $\tilde{\delta}_{k}$
(resp. $\tilde{\delta}_{k'}$).
As usual, assume $l^{(a)}=\infty$ (resp. $l'^{(k'-1)}=\infty$)
if $\nu^{(a)}$ is not added by $\tilde{\delta}_{k}$
(resp. $\tilde{\delta}_{k'}$).
Then, as long as $l'^{(a)}<\infty$,
we have $l'^{(a)}\leq l^{(a+1)}$.
\end{lemma}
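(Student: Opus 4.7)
The plan is to argue by induction on $a$, starting from $a=k'$, using the explicit change in vacancy numbers induced by $\tilde{\delta}_k$.

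First I would record how $\tilde{\delta}_k$ alters the vacancy numbers. Since $\tilde{\delta}_k$ adds one box at column $l^{(b)}$ of $\nu^{(b)}$ for each $b\geq k$, under the convention $l^{(b)}=\infty$ for $b<k$, substituting $\Delta Q^{(b)}_j=1$ for $j\geq l^{(b)}$ (and $0$ otherwise) into the formula \eqref{vacancy} yields
\[
\Delta p^{(a)}_j=
\begin{cases}
+1 & \text{if } l^{(a+1)}\leq j<l^{(a)},\\
-1 & \text{if } l^{(a)}\leq j<l^{(a-1)},\\
0 & \text{otherwise},
\end{cases}
\]
at every index $a$ where the standard form of the vacancy number applies; boundary indices are handled by the same case analysis with modified coefficients. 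The key observation that will drive the induction is: after applying $\tilde{\delta}_k$, the partition $\nu^{(a)}$ contains no singular row of length $j$ with $l^{(a+1)}\leq j<l^{(a)}$. Indeed on that interval the vacancy number strictly increases by one while the rigging of every row not newly added by $\tilde{\delta}_k$ is unchanged, so previously singular rows become non-singular and previously non-singular rows stay non-singular.

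Now I would run the induction with hypothesis $l'^{(a-1)}\leq l^{(a)}$, adopting $l'^{(k'-1)}=\infty$ as in Definition \ref{def:delta-1}. The base case $a=k'$ reduces to $\infty\leq l^{(k')}$, which is either vacuous (if $k'<k$, as then $l^{(k')}=\infty$) or reads $\infty\leq\infty$. Under the hypothesis, the algorithm for $\tilde{\delta}_{k'}$ chooses in $\nu^{(a)}$ the longest singular row of length at most $l'^{(a-1)}-1\leq l^{(a)}-1$. By the observation above, any singular row of $\nu^{(a)}$ of length at most $l^{(a)}-1$ must in fact have length strictly less than $l^{(a+1)}$, so $l'^{(a)}-1<l^{(a+1)}$, which is exactly $l'^{(a)}\leq l^{(a+1)}$ and advances the induction.

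The main technicality will be the book-keeping of degenerate cases: when two of $l^{(a-1)},l^{(a)},l^{(a+1)}$ coincide, one of the intervals in the $\Delta p$ formula collapses, but the conclusion then either follows automatically from the monotonicity $l'^{(a)}\leq l'^{(a-1)}\leq l^{(a)}=l^{(a+1)}$, or from the same ``no singular row in the collapsed interval'' argument applied to the remaining intervals. The transition across $a=k-1$ (where $l^{(k-1)}=\infty$ by convention) and the vacancy-number modifications at the boundary index $a=a^\dia$ are handled by the same direct case-checking; the clause ``as long as $l'^{(a)}<\infty$'' in the statement is automatically true for $a\geq k'$, since $\tilde{\delta}_{k'}$ always chooses at worst a length-$0$ row and therefore produces a finite $l'^{(a)}$.
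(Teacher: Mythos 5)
Your proof is correct and follows essentially the same route as the paper's: an induction on $a$ whose engine is the observation that $\tilde{\delta}_k$ raises the vacancy number by one on rows of $\nu^{(a)}$ of length $j$ with $l^{(a+1)}\leq j<l^{(a)}$ (leaving their riggings untouched), so no such row is singular and $\tilde{\delta}_{k'}$, constrained by the hypothesis $l'^{(a-1)}\leq l^{(a)}$, must pick a row of length below $l^{(a+1)}$. The only difference is cosmetic: the paper starts the induction at the step $l'^{(k-1)}\leq l^{(k)}$ rather than at the vacuous case $a=k'$.
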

\begin{proof}
We proceed by induction on $a$.
As the initial step, we show $l'^{(k-1)}\leq l^{(k)}$.
Recall that by $\tilde{\delta}_k$ the vacancy numbers
for the rows of $\nu^{(k-1)}$ that are longer than or equal
to $l^{(k)}$ are increased by 1, so that they are non singular
and cannot be added by $\tilde{\delta}_{k'}$.
This forces to $l'^{(k-1)}-1<l^{(k)}$, i.e., $l'^{(k-1)}\leq l^{(k)}$.
Note that $\tilde{\delta}_{k'}$ will add to length
$l'^{(k)}-1$ row of $\nu^{(k)}$.
As an induction hypothesis, for some $k\leq a$,
assume that we have $l'^{(a-1)}\leq l^{(a)}$.
Since the vacancy numbers for
the rows of $\nu^{(a)}$ whose lengths $j$
satisfy $l^{(a)}>j\geq l^{(a+1)}$ are increased by 1
due to $\tilde{\delta}_k$, they cannot be added by
$\tilde{\delta}_{k'}$.
Then, combining this with the inequality
$l^{(a)}>l'^{(a-1)}-1\geq l'^{(a)}-1$, we have
$l^{(a+1)}>l'^{(a)}-1$, i.e., $l^{(a+1)}\geq l'^{(a)}$.
By induction, we finish the proof of the lemma.
\end{proof}

\begin{lemma}\label{lem:delta-1_rig}
We follow the description at Definition \ref{def:Psi-1}.
According to Step (i) we fix a group of letters of $T$.
Once a row of $(\nu^\bullet,J^\bullet)$
becomes singular by the addition of
a box by $\tilde{\delta}$, then the row remains singular
during all the remaining procedure
corresponding to the same group of $T$.
\end{lemma}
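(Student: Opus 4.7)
The plan is to fix a single group $G$ of $T$ and write $k_1<k_2<\cdots<k_h$ for the row-positions of its letters; since the rightmost $j{+}1$ in an LR tableau must lie strictly below the rightmost $j$ (so that the reverse row word stays Yamanouchi), these inequalities are indeed strict, and within $G$ the $\tilde{\delta}$'s are applied in the order $\tilde{\delta}_{k_{h}},\tilde{\delta}_{k_{h-1}},\dots,\tilde{\delta}_{k_{1}}$, i.e., with subscripts strictly decreasing in time. Write $[j]$ for the state just after $\tilde{\delta}_{k_{j}}$ and $l^{(b)}_{[j]}$ for the column to which $\tilde{\delta}_{k_{j}}$ adds a box in $\nu^{(b)}$ (set $l^{(b)}_{[j]}=\infty$ if no addition happens there). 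Suppose a row $R$ of $\nu^{(a)}$ becomes singular at time $[m]$ at length $l:=l^{(a)}_{[m]}$. For each later step $j<m$ within $G$ I plan to show two things: (A) that $R$ is not the row modified by $\tilde{\delta}_{k_j}$, so its length and rigging are preserved, and (B) that the vacancy number $p^{(a)}_l$ is unchanged by $\tilde{\delta}_{k_j}$. Since the rigging of $R$ was set equal to $p^{(a)}_l$ when $R$ became singular, (A) together with (B) then forces $R$ to remain singular at every subsequent step of $G$.

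The main device is the chained inequality
\[
l^{(b)}_{[j]}\le l^{(a)}_{[m]}\qquad\text{for }b\in\{a-1,\,a,\,a+1\},
\]
valid whenever the left-hand side is finite. Iterating Lemma \ref{lem:twodelta-1s} on consecutive pairs $(\tilde{\delta}_{k_{j'+1}},\tilde{\delta}_{k_{j'}})$ for $j'=j,\dots,m-1$ yields $l^{(b+s)}_{[j+s]}\le l^{(b+s+1)}_{[j+s+1]}$ at each link, hence $l^{(b)}_{[j]}\le l^{(b+(m-j))}_{[m]}$; the monotonicity $l^{(c+1)}_{[m]}\le l^{(c)}_{[m]}$ built into Definition \ref{def:delta-1} then bounds the right-hand side by $l^{(a)}_{[m]}$. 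Finiteness at each intermediate link follows from $b+s\ge a-1\ge k_m-1\ge k_{j+s}$, using that the $k$'s strictly decrease in time.

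From the bound, (A) is immediate: $\tilde{\delta}_{k_j}$ modifies a row of old length $l^{(a)}_{[j]}-1<l$, which is strictly shorter than $R$. For (B), each box added at column $\le l$ in the neighbours $\nu^{(a-1)},\nu^{(a)},\nu^{(a+1)}$ contributes exactly $+1$ to the corresponding $\Delta Q_l$, so in the interior, where the vacancy formula is the generic \eqref{vacancy},
\[
\Delta p^{(a)}_l=\Delta Q^{(a-1)}_l-2\Delta Q^{(a)}_l+\Delta Q^{(a+1)}_l=1-2+1=0.
\]
Hence the unchanged rigging of $R$ still matches the updated vacancy number, and $R$ remains singular.

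The hard part is the boundary, namely the indices $a=a^{\dia}$ and $a=a^{\dia}-1$ (and, for $\dia=\vdom$, the halved quasipartition $\nu^{(n)}$), where the vacancy formula is replaced by one of the type-specific variants listed after \eqref{vacancy} and Step (ii) of Definition \ref{def:delta-1} adds boxes to $\nu^{(n-1)}$ or $\nu^{(n)}$ according to the case split. I would verify $\Delta p^{(a)}_l=0$ in each of these cases by a direct type-by-type computation, again using the chained inequality to control the columns added by Step (ii); the altered coefficients in the boundary vacancy formulas are compensated by the doubled $\Delta Q^{(n-1)}$ or $\Delta Q^{(n)}$ contributions coming from the halving convention. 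This case analysis is mechanical but forms the real technical core of the argument.
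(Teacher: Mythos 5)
Your proposal follows essentially the same route as the paper's proof: both rest on iterating Lemma \ref{lem:twodelta-1s} to bound the columns $l^{(a-1)},l^{(a)},l^{(a+1)}$ of the boxes added by each later $\tilde{\delta}$ of the group by the column of the row in question, from which the invariance of that row's length and rigging (your (A)) and of its vacancy number via $\Delta p^{(a)}_l=1-2+1=0$ (your (B)) both follow. You are in fact slightly more explicit than the paper, which leaves the chaining implicit (``recursively'') and does not spell out the boundary vacancy formulas at $a=a^\dia$ and, for $\dia=\vdom$, near $\nu^{(n-1)},\nu^{(n)}$; those checks go through as you anticipate.
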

\begin{proof}
As the proof is the same for all groups of letters of $T$,
we fix a group of the cardinality $h$ and call it the group $h$.
Let the letter under consideration be integer $j$ of the group $h$
at the $k$-th row of $T$.
Denote by $l^{(a)}$ (resp. $l'^{(a)}$) the column coordinate
of the box of $\nu^{(a)}$ that is added by $\tilde{\delta}_k$
(resp. $\tilde{\delta}_{k'}$), where $k'$ is the row coordinate
of the letter $j+1$ of the group $h$.
Let us check that the rigging corresponding to the row
that is added by $\tilde{\delta}_k$
will not be changed by the rest of the procedures
$\tilde{\delta}_{k'},\cdots$ corresponding to the rest of
the letters $j+1,j+2,\cdots,h$ of the group $h$.
{}From Lemma \ref{lem:twodelta-1s} we have $l'^{(a)}\leq l^{(a+1)}$
and from definition we have $l^{(a)}\geq l^{(a+1)}$
so that we have $l'^{(a)}-1<l^{(a)}$.
Thus $\tilde{\delta}_{k'}$ does not touch rows that are
touched by $\tilde{\delta}_{k}$, which implies
invariance of the rigging.
Recursively we see the invariance of the riggings
during all the remaining procedure
corresponding to the group $h$.

Next we check the invariance of the vacancy numbers.
Again, from Lemma \ref{lem:twodelta-1s} we have $l'^{(a-1)}\leq l^{(a)}$
and by definition we have $l'^{(a-1)}\geq l'^{(a)}\geq l'^{(a+1)}$.
Thus the vacancy number for the length $l^{(a)}$ rows of
$\nu^{(a)}$ will not change during the rest of the
procedures corresponding to the group $h$.
Combining the invariance of the riggings and the vacancy numbers,
the proof of the Lemma follows.
\end{proof}

\begin{proof}[Proof of Theorem \ref{Psi-1_weldef}]
Consider the map $\tilde{\Psi}:\{(\nu^\bullet,J^\bullet),T\}
\longmapsto (\nu'^\bullet,J'^\bullet)$.
In Proposition \ref{prop:delta-1_weldef} we checked
the well-definedness of $\tilde{\delta}_k$.
Thus the image $(\nu'^\bullet,J'^\bullet)$ is
a type $\diamondsuit$ rigged configuration.
Let us check that the weight of $(\nu'^\bullet,J'^\bullet)$
coincides with the inner shape of $T$.
By assumption the outer shape of $T$ coincides with the
weight of $(\nu^\bullet,J^\bullet)$.
Then observe the fact that each $\tilde{\delta}_k$
adds one box for each $\nu^{(a)}$ ($k\leq a$).
On the level of the weight, this means that
$\tilde{\delta}_k$ removes one box from the $k$-th
row of the Young diagram that represents the weight.
By construction of $\tilde{\Psi}$ we see that
application of $\tilde{\Psi}$ will remove from the Young
diagram all boxes which correspond to filled boxes of $T$.
Thus we see that the weight of $(\nu'^\bullet,J'^\bullet)$
is equal to the inner shape of $T$.

Finally we have to check that the shape of
weight of $T$ is equal to $\nu'^{(a^\diamondsuit)}$.
Let the cardinality of groups of $T$ be
$h_1,h_2,\cdots,h_l$ where the labellings are
according to the order in the procedure
of $\tilde{\Psi}$ (we will call the groups by their cardinality).
In particular, we have $h_i\geq h_{i+1}$.
Then what we have to show is that the columns of
$\nu'^{(a^\diamondsuit)}$ have height $h_1,h_2,\cdots,h_l$
from left to right.
The proof proceeds inductively on the number of groups
of $T$ that $\tilde{\Psi}$ has processed.
Consider the first group of the cardinality $h_1$.
Let the length of the outer shape of $T$ be $N$.
Then we have $\nu^{(a)}=\emptyset$ for all $N\leq a$.
Thus corresponding to the letter $h_1$,
$\tilde{\delta}$ will create a new row for
$\nu^{(a)}=\emptyset$ $(N\leq a)$.
By applying Lemma \ref{lem:twodelta-1s} with
$l^{(a)}=1$ $(N\leq a)$, we see that $l'^{(a)}=1$ $(N\leq a)$,
i.e., the second $\tilde{\delta}$ corresponding to
the letter $h_1-1$ will also create a new row for
$\nu^{(a)}$ $(N\leq a)$.
We can recursively continue the same argument and obtain
the following result: after processing all letters of
the group $h_1$ of $T$, $\nu^{(a)}$ $(N\leq a)$ becomes
the single column type partition whose height is $h_1$.
Thus we have checked the assertion for the first group $h_1$.

Suppose that we have shown the property for the group $h_i$.
As the induction hypothesis, each $\nu^{(a)}$ $(N\leq a)$
has columns whose heights are $h_1,h_2,\cdots,h_i$
from left to right.
Let us apply Lemma \ref{lem:delta-1_rig} in this situation.
Then all rows that are added by $\tilde{\delta}$
corresponding to all letters of the group $h_i$
remain as singular rows.
Recall that, by definition of $\tilde{\Psi}$,
the letter $j$ of the group
$h_{i+1}$ is located on the same row or lower row of $T$
compared with the letter $j$ of the group $h_i$.
To begin with let us consider the letter $h_{i+1}$
of the group $h_{i+1}$.
Recall that $h_i\geq h_{i+1}$ so that there is also
the letter $h_{i+1}$ within the group $h_i$.
Then, on the level of $(\nu^\bullet,J^\bullet)$,
the process corresponding with the letter $h_{i+1}$
of the group $h_{i+1}$ start from the same $\nu^{(a)}$
or rightward $\nu^{(a)}$ compared with the starting point
of the process corresponding to the letter $h_{i+1}$
of the group $h_{i}$.
Combining this with Lemma \ref{lem:delta-1_rig},
we conclude that the operation $\tilde{\delta}$ corresponding
to the letter $h_{i+1}$ of the group $h_{i+1}$ will
add one more box to the same rows that are added by
the process related with the letter $h_{i+1}$ of the group $h_{i}$.
Especially we add the new column, i.e., the $(i+1)$-th column
to each $\nu^{(a)}$ $(N\leq a)$.
Observe that the procedure corresponding to
the letter $h_{i+1}$ of the group $h_{i+1}$ will not violate
the singular property of all rows that are added and set to be
singular by the letters $h_{i+1}-1,\cdots,2,1$ of the group $h_i$.
This follows from the fact that the procedure for the letter $h_{i+1}$
of the group $h_{i+1}$ exactly follow that for the group $h_i$,
hence we can apply Lemma \ref{lem:twodelta-1s} to claim the
invariance of the vacancy numbers.
Thus we can continue recursively the same arguments and
obtain the following result: after processing all letters
of the group $h_{i+1}$, each $\nu^{(a)}$ $(N\leq a)$ has
$i+1$ columns whose heights are $h_1,h_2,\cdots,h_{i+1}$
from left to right.
This completes the induction step thereby finishing
the whole proof of Theorem \ref{Psi-1_weldef}.
\end{proof}

\section{$M^\dia$ in terms of $M^\varnothing$}
In this section we assume $\dia\ne\varnothing$. Note that $\gamma$ defined in section 
\ref{subsec:stability} is given by $\gamma=2/|\dia|$ for $\geh^\dia$.

\subsection{Change of statistic}
Let $(\nu^\bullet,J^\bullet)\in\mathrm{RC}^\dia$. Suppose the map $\delta$ sends 
$(\nu^\bullet,J^\bullet)$ to $((\tilde{\nu}^\bullet,\tilde{J}^\bullet),k)$. Recall $l^*$ 
in Proposition \ref{prop:stability}. Let $\eta_a$ be the length of the row
of $\nu^{(a)}$ whose rightmost node is removed by $\delta$ for $k\le a\le l^*$.

\begin{lemma} \label{lem:c diff}
\begin{align*}
(c(\nu)-c(\tilde{\nu}))/\gamma=
& 2\sum_{k\le a<l^* \atop i\ge\eta_a}m_i^{(a)}-\sum_{k-1\le a<l^* \atop i\ge\eta_{a+1}}m_i^{(a)}
-\sum_{k\le a<l^* \atop i\ge\eta_a}m_i^{(a+1)}+\sum_{i\ge\eta_{l^*}}m_i^{(l)} \\
& -\sum_{k\le a<l^*}(1-\delta_{\eta_a\eta_{a+1}})-\frac12-\sum_{k\le a\le l^* \atop i\ge\eta_a}L_i^{(a)}
\end{align*}
\end{lemma}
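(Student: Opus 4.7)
The plan is a direct computation of $c(\nu)-c(\tilde\nu)$ starting from the stable-regime expression in Proposition \ref{prop:stability}(3). Since $\delta$ may shift the stability index, I first check that the truncated quadratic form is invariant under enlarging $l^*$: when $\nu^{(l^*)}=\nu^{(l^*+1)}$, the three contributions from $a,b\in\{l^*,l^*+1\}$ collapse to the single $a=b=l^*$ contribution, thanks to the modification $C_{l^*,l^*}=1$. This lets me evaluate both $c(\nu)$ and $c(\tilde\nu)$ using a common $l^*$ chosen large enough for both.

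Under $\delta$ the multiplicities change only by $\Delta m_{\eta_a}^{(a)}=-1$ and $\Delta m_{\eta_a-1}^{(a)}=+1$ for $k\le a\le l^*$, and I will drive the whole calculation with two elementary identities:
\[
\sum_i\min(j,i)\,\Delta m_i^{(b)}=-\mathbf{1}_{j\ge\eta_b}\qquad(b\in[k,l^*]),
\]
\[
\sum_{j,i}\min(j,i)\,\Delta m_j^{(a)}\Delta m_i^{(b)}=\mathbf{1}_{\eta_a=\eta_b}\qquad(a,b\in[k,l^*]).
\]
Expanding $c(\tilde\nu)-c(\nu)$ in powers of $\Delta m$, the quadratic form $\tfrac{\gamma}{2}\sum C_{ab}\min(j,i)\,m_j^{(a)}m_i^{(b)}$ splits into a linear piece $\gamma\sum C_{ab}\min(j,i)\,m_j^{(a)}\Delta m_i^{(b)}$ (using symmetry of $C$) and a pure quadratic piece, while the $L$-term contributes only linearly. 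Inserting the tridiagonal form $C_{ab}=(2-\delta_{al^*})\delta_{ab}-\delta_{a,b+1}-\delta_{a,b-1}$ together with the first identity and then negating, the linear contributions produce the four $m$-sums and the final $L$-sum of the lemma; the asymmetric coefficient $+1$ on $\sum_{i\ge\eta_{l^*}}m_i^{(l^*)}$ (rather than $+2$) is exactly what comes from the correction $-\delta_{al^*}$ in the diagonal of $C$.

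The pure quadratic piece reduces, via the second identity, to $\tfrac{\gamma}{2}\sum_{a,b\in[k,l^*]}C_{ab}\mathbf{1}_{\eta_a=\eta_b}$; expanding this yields $\gamma\bigl[(l^*-k)+\tfrac12-\sum_{k\le a<l^*}\mathbf{1}_{\eta_a=\eta_{a+1}}\bigr]$, which after negating and dividing by $\gamma$ is precisely $-\sum_{k\le a<l^*}(1-\delta_{\eta_a\eta_{a+1}})-\tfrac12$. The main obstacle is the bookkeeping at the boundary $a=l^*$: both the asymmetric coefficient on $m^{(l^*)}$ in the linear piece and the constant $-\tfrac12$ in the quadratic piece arise from the single modification $C_{l^*,l^*}=2\to 1$, so tracking signs and index ranges near $l^*$ is the delicate part of the argument; elsewhere it is a routine index chase.
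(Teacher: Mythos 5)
Your proposal is correct and follows essentially the same route as the paper, which likewise substitutes the multiplicity change $m_i^{(a)}\mapsto m_i^{(a)}-\delta_{i\eta_a}+\delta_{i,\eta_a-1}$ into the truncated charge formula of Proposition \ref{prop:stability}(3) and computes directly. Your two $\min$-identities and the check that the truncated quadratic form is stable under enlarging $l^*$ are exactly the details the paper leaves implicit in its phrase ``a direct calculation,'' and your accounting of the boundary term $C_{l^*l^*}=1$ correctly produces both the coefficient $+1$ on $\sum_{i\ge\eta_{l^*}}m_i^{(l^*)}$ and the constant $-\tfrac12$.
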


\begin{proof}
Note that $m_i^{(a)}$ changes to $m_i^{(a)}-\delta_{i\eta_a}+\delta_{i,\eta_a-1}$ by $\delta$ and
use Proposition \ref{prop:stability} (3). A direct calculation shows the desired result.
\end{proof}

Let $\tilde{p}_i^{(a)}$ be the vacancy number of $(\tilde{\nu}^\bullet,\tilde{J}^\bullet)$.
\begin{lemma} \label{lem:vac diff}
For $k\le a\le l^*$,
\[
p^{(a)}_{\eta_a}-\tilde{p}^{(a)}_{\eta_a-1}=\sum_{i\ge\eta_a}L^{(a)}_i
+\sum_{i\ge\eta_a}m^{(a-1)}_i-2\sum_{i\ge\eta_a}m^{(a)}_i 
+\sum_{i\ge\eta_a}m^{(a+1)}_i+1-\delta_{\eta_a\eta_{a+1}}.
\]
\end{lemma}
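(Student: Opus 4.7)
The plan is to substitute the generic vacancy-number formula \eqref{vacancy} into both $p^{(a)}_{\eta_a}$ and $\tilde{p}^{(a)}_{\eta_a-1}$ and evaluate the difference directly. Since $k\le a\le l^*$ lies well inside the region where $\upsilon_b=1$ for $b\in\{a-1,a,a+1\}$ (the special boundary formulas near $n$ are irrelevant under the large-rank hypothesis of Subsection \ref{subsec:stability}), we may use uniformly
\[
p^{(a)}_i=\sum_{j} L^{(a)}_j\min(i,j)+Q^{(a-1)}_i-2Q^{(a)}_i+Q^{(a+1)}_i.
\]
The $L$-contribution is easily handled: $\sum_{j}L^{(a)}_j\bigl(\min(\eta_a,j)-\min(\eta_a-1,j)\bigr)=\sum_{j\ge\eta_a}L^{(a)}_j$, which accounts for the first term on the right-hand side.

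Next, for each $b\in\{a-1,a,a+1\}$ I would split
\[
Q^{(b)}_{\eta_a}-\tilde{Q}^{(b)}_{\eta_a-1}=\sum_{j}\bigl(\min(\eta_a,j)-\min(\eta_a-1,j)\bigr)m^{(b)}_j+\sum_{j}\min(\eta_a-1,j)\bigl(m^{(b)}_j-\tilde{m}^{(b)}_j\bigr).
\]
The first summand is $\sum_{j\ge\eta_a}m^{(b)}_j$, producing the three $m$-terms in the lemma with the correct signs $+1,-2,+1$. From the recipe of $\delta$ in Definition \ref{def:delta}, whenever level $b$ is touched one has $\tilde{m}^{(b)}_j=m^{(b)}_j-\delta_{j,\eta_b}+\delta_{j,\eta_b-1}$, and the second summand evaluates to $\min(\eta_a-1,\eta_b)-\min(\eta_a-1,\eta_b-1)$; if $b$ is not touched this summand is zero trivially.

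Using the monotonicity $\eta_{a-1}\ge\eta_a\ge\eta_{a+1}$ that is built into the row-selection step of $\delta$, the correction vanishes for $b=a-1$ and $b=a$ (both arguments of the first minimum exceed $\eta_a-1$, and so do both arguments of the second), while for $b=a+1$ it equals $1$ if $\eta_{a+1}<\eta_a$ and $0$ if $\eta_{a+1}=\eta_a$, i.e.\ $1-\delta_{\eta_a,\eta_{a+1}}$. Summing the three $Q$-contributions against $+1,-2,+1$ together with the $L$-part produces exactly the claimed identity.

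The main bookkeeping point to verify is the edge behavior. At $a=k$, level $a-1=k-1$ is not touched by $\delta$ (the algorithm halts there), so its correction vanishes automatically and $m^{(k-1)}_j$ still enters \eqref{vacancy} in the stated way. At $a=l^*$, one should note that $\eta_{l^*+1}$ is well-defined: the stabilized region $a\ge l^*$ has $\nu^{(a)}=\nu^*$ (or its column-halved version) with a common longest-row length, and $\delta$ removes a box from the longest row at each such level, so $\eta_{l^*+1}=\eta_{l^*}$ and the Kronecker term contributes $0$, as it should. No further case work is required, since for $a\le l^*$ the three relevant levels are in the uniform-width regime.
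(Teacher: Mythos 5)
Your calculation is correct and is exactly the ``direct calculation noting that $\eta_{a-1}\ge\eta_a\ge\eta_{a+1}$'' that the paper's one-line proof invokes: the split of each $Q^{(b)}_{\eta_a}-\tilde{Q}^{(b)}_{\eta_a-1}$ into a column-count term $\sum_{i\ge\eta_a}m^{(b)}_i$ plus a correction $\min(\eta_a-1,\eta_b)-\min(\eta_a-1,\eta_b-1)$, which survives only for $b=a+1$ as $1-\delta_{\eta_a\eta_{a+1}}$, is the intended bookkeeping. Your treatment of the edge cases $a=k$ (level $k-1$ untouched) and $a=l^*$ (where $\eta_{l^*+1}=\eta_{l^*}$ in the stabilized region) is also sound and consistent with Propositions \ref{prop:stability} and \ref{prop:Psi_stabilize}.
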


\begin{proof}
Direct calculation noting that $\eta_{a-1}\ge\eta_a\ge\eta_{a+1}$.
\end{proof}

\begin{prop} \label{prop:ch diff}
Suppose we get $(\tilde{\nu}^\bullet,\tilde{J}^\bullet)$ from $(\nu^\bullet,J^\bullet)$ by the map
$\delta$. Then we have $c(\nu^\bullet,J^\bullet)-c(\tilde{\nu}^\bullet,\tilde{J}^\bullet)=-\gamma/2$.
\end{prop}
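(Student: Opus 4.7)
The plan is to verify the claimed shift $-\gamma/2$ by combining the already-available Lemma \ref{lem:c diff} with the change in the rigging sum $|J^\bullet|$, and showing that essentially every term cancels except for a solitary $-1/2$.

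First I would compute $(|J^\bullet|-|\tilde J^\bullet|)/\gamma$ directly from Definition \ref{def:delta}. The only rows whose riggings are altered are those chosen in Step (ii): each such row was singular with rigging $p^{(a)}_{\eta_a}$, and after the box is removed its rigging is reset to the new vacancy number $\tilde p^{(a)}_{\eta_a-1}$. Rows chosen in Step (i) lie above level $l^*$ and contribute $0$ to $|J^\bullet|=\gamma\sum_{a\le l^*,i}|J^{(a,i)}|$ both before and after. Hence
\[
(|J^\bullet|-|\tilde J^\bullet|)/\gamma=\sum_{k\le a\le l^*}\bigl(p^{(a)}_{\eta_a}-\tilde p^{(a)}_{\eta_a-1}\bigr),
\]
which I would then expand using Lemma \ref{lem:vac diff}.

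Adding this to $(c(\nu)-c(\tilde\nu))/\gamma$ from Lemma \ref{lem:c diff}, I would verify cancellations term-by-term. The $L^{(a)}_i$ contributions cancel on the nose since both ranges are $k\le a\le l^*$, $i\ge\eta_a$. The $m^{(a-1)}_i$ sum coming from Lemma \ref{lem:vac diff} cancels the $-\sum_{k-1\le a<l^*,\,i\ge\eta_{a+1}}m_i^{(a)}$ piece of Lemma \ref{lem:c diff} after the index shift $a\mapsto a-1$; a symmetric computation handles the $m^{(a+1)}_i$ contributions. The $m^{(a)}_i$ contributions condense to $\sum_{i\ge\eta_{l^*}}\bigl(m_i^{(l^*+1)}-m_i^{(l^*)}\bigr)$, which vanishes by Proposition \ref{prop:stability}(1) in the stable range $\nu^{(l^*+1)}=\nu^{(l^*)}=\nu^*$. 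The Kronecker-delta pieces collapse to $1-\delta_{\eta_{l^*}\eta_{l^*+1}}$, which also vanishes once one checks $\eta_{l^*+1}=\eta_{l^*}$, a consequence of the uniform action of $\delta$ on the stable tail established in Proposition \ref{prop:Psi_stabilize}. What is left is the isolated $-1/2$ already present in Lemma \ref{lem:c diff}, so $(c(\nu^\bullet,J^\bullet)-c(\tilde\nu^\bullet,\tilde J^\bullet))/\gamma=-1/2$, giving the claim.

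The main obstacle is the boundary bookkeeping at $a=l^*$: one has to confirm that both $\eta_{l^*+1}=\eta_{l^*}$ and $m^{(l^*+1)}_i=m^{(l^*)}_i$, and to handle the types (namely $A^{(2)}_{2n-1}$, $B_n^{(1)}$, $D_n^{(1)}$) for which the stable partition is halved at the extreme nodes. The former identities rest on Propositions \ref{prop:stability} and \ref{prop:Psi_stabilize}, both of which apply because the large-rank hypothesis places $l^*+1$ well inside the stable region; the halving issue does not intrude since $l^*$ is far below the halved nodes. Away from the boundary, the proof is a routine index shift with no surprises.
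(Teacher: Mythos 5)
Your proposal is correct and follows exactly the paper's route: the identity $(|J^\bullet|-|\tilde J^\bullet|)/\gamma=\sum_{k\le a\le l^*}(p^{(a)}_{\eta_a}-\tilde p^{(a)}_{\eta_a-1})$ is precisely the decomposition the paper writes down, after which the paper likewise invokes Lemmas \ref{lem:c diff} and \ref{lem:vac diff} and a direct cancellation. You have merely made explicit the ``direct calculation'' (including the boundary terms at $a=l^*$, which do vanish for the reasons you give), so there is nothing further to add.
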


\begin{proof}
Let $\tilde{p}_i^{(a)}$ be the vacancy number of $(\tilde{\nu}^\bullet,\tilde{J}^\bullet)$. By the 
definition of $\delta$, we have 
$c(\nu^\bullet,J^\bullet)-c(\tilde{\nu}^\bullet,\tilde{J}^\bullet)
=(c(\nu)-c(\tilde{\nu}))+\gamma\sum_{k\le a\le l^*}(p^{(a)}_{\eta_a}-\tilde{p}^{(a)}_{\eta_a-1})$
where $\eta_a$ is the length of the row of $\nu^{(a)}$ whose rightmost node is removed by $\delta$ 
for $k\le a\le l^*$. A direct calculation using Lemma \ref{lem:c diff} and \ref{lem:vac diff} completes
the proof.
\end{proof}

Now we have the following theorem.

\begin{theorem} \label{th:M=K}
For $\dia=\cell,\hdom,\vdom$ the stable fermionic formula $M^\dia(\la,\El;q)$ is expressed as a sum
of $M^\varnothing(\eta,\El;q)$ as follows.
\[
M^\dia(\la,\El;q)=q^{-\frac{\gamma}2(|\El|-|\la|)}
\sum_{\mu\in\mathcal{P}^\dia_{|\El|-|\la|},\eta\in\mathcal{P}^\cells_{|\El|}}c_{\la\mu}^\eta
M^\varnothing(\eta,\El;q^\gamma)
\]
Here $|\El|=\sum_{a\in I_0,i\in\Z_{>0}}aiL_i^{(a)},\mathcal{P}^\dia_N$ is the set of
partitions of $N$ whose diagrams can be tiled by $\dia$, and $c_{\la\mu}^\eta$ is the 
Littlewood-Richardson coefficient.
\end{theorem}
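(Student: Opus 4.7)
The plan is to combine the bijection $\Psi$ of Theorem \ref{th:main} with the change of statistic from Proposition \ref{prop:ch diff}. Starting from the stable fermionic form $M^\dia(\la,\El;q)=\sum_{(\nu^\bullet,J^\bullet)\in\mathrm{RC}^\dia(\la,\El)}q^{c(\nu^\bullet,J^\bullet)}$, I re-index the sum via $\Psi$, which produces triples $((\nu'^\bullet,J'^\bullet),T)$ with $(\nu'^\bullet,J'^\bullet)\in\mathrm{RC}^\varnothing(\eta,\El)$, $T\in LR^\eta_{\la\mu}$, $\mu=\nu^{(a^\dia)}$, and $\eta=\wt(\nu'^\bullet,J'^\bullet)$. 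Since $|LR^\eta_{\la\mu}|=c^\eta_{\la\mu}$, summing over $T$ immediately produces the Littlewood--Richardson coefficient, so what remains is to track $q^{c(\nu^\bullet,J^\bullet)}$ through the bijection.

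To do so I iterate Proposition \ref{prop:ch diff}: each application of $\delta$ increases $c$ by $\gamma/2$, so $c(\nu^\bullet,J^\bullet)=c(\nu'^\bullet,J'^\bullet)-N\gamma/2$, where $N$ is the total number of $\delta$-steps in $\Psi$. By Definition \ref{def:Psi} one applies $\delta_l^{h_l}\cdots\delta_1^{h_1}$ with $h_i$ the height of the $i$-th column of $\nu^{(a^\dia)}$, giving $N=\sum_i h_i=|\nu^{(a^\dia)}|=|\mu|$. Using the weight identity $\la_a=\sum_{b\ge a,i}iL_i^{(b)}+|\nu^{(a-1)}|-|\nu^{(a)}|$ from the end of Section \ref{subsec:stability}, summed over $1\le a\le a^\dia$ (the range in which the identity applies without boundary corrections from the halving that occurs for $\vdom$), telescoping gives $|\la|=|\El|-|\nu^{(a^\dia)}|=|\El|-|\mu|$, hence $N=|\El|-|\la|$.

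The last step is to identify the $\geh^\dia$-statistic on the image $(\nu'^\bullet,J'^\bullet)$ with $\gamma$ times the type-$A$ statistic appearing in $M^\varnothing$. By Proposition \ref{prop:Psi_stabilize}, $\Psi$ strips $\nu^{(a^\dia-1)}$ and $\nu^{(a^\dia)}$ entirely, so $\nu'^{(a)}=\emptyset$ for $a\ge a^\dia-1$. Consequently, in Proposition \ref{prop:stability}(3) applied to $(\nu'^\bullet,J'^\bullet)$, the cutoff $l^*$ and the Cartan-like matrix $C_{ab}$ agree with those of the $A_n^{(1)}$ formula, and both the configuration term and the rigging term $|J'^\bullet|$ carry the uniform overall prefactor $\gamma$. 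This yields $c_{\geh^\dia}(\nu'^\bullet,J'^\bullet)=\gamma\,c_\varnothing(\nu'^\bullet,J'^\bullet)$, so that $q^{c(\nu^\bullet,J^\bullet)}=q^{-(\gamma/2)(|\El|-|\la|)}(q^\gamma)^{c_\varnothing(\nu'^\bullet,J'^\bullet)}$, and summing over the bijection delivers the theorem. The main obstacle is this final rescaling: one must verify case by case (across $\geh^\dia=D_{n+1}^{(2)},C_n^{(1)},D_n^{(1)}$) that the stable $c$-formula genuinely factors as $\gamma$ times the type-$A$ formula once the image vanishes beyond $a^\dia-2$, whereas the $\delta$-count and the weight telescoping follow routinely from the structure of $\Psi$.
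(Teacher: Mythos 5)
Your proposal is correct and follows essentially the same route as the paper: re-index via the bijection $\Psi$ of Theorem \ref{th:main}, accumulate the shift $\gamma/2$ per $\delta$-step from Proposition \ref{prop:ch diff}, and observe that the image's statistic is $\gamma$ times the type-$A$ one. The only (harmless) variation is in counting the number of $\delta$-steps: you compute it directly as $|\mu|=|\nu^{(a^\dia)}|$ and telescope the weight identity to get $|\El|-|\la|$, whereas the paper shows each $\delta$ increases $|\la|$ by $1$ and that $|\la|=|\El|$ in the image via \eqref{num of box}; both rest on the same configuration constraint.
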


\begin{proof}
In view of Theorem \ref{th:main}, Proposition \ref{prop:ch diff} and the fact that the image of $\Psi$ can
be regarded as a rigged configuration of type $\varnothing$ (although the statistic is multiplied 
by $\gamma$), it is sufficient to show
\begin{itemize}
\item[(i)] $\delta$ increases $|\la|$ by 1,
\item[(ii)] in the image of $\Psi$ we have $|\la|=|\El|$
\end{itemize}
to prove the theorem.

Let $\delta$ remove a box from $\nu^{(a)}$ for $a\ge k$. Then, from \eqref{config} it amounts to
adding $\alpha_k+\alpha_{k+1}+\cdots+\alpha_n/\upsilon_n=\epsilon_k\,(=\,$standard orthonormal basis
of the weight space) to $\la$ when $\dia=\cell,\hdom$. Hence, it increases $|\la|$ by 1. If 
$\dia=\vdom$, we see applying $\delta$ two times increases $|\la|$ by 2.

Next we show (ii). In the image of $\Psi$, when $k\ge\ell(\wt(\nu^\bullet,J^\bullet))$, 
from \eqref{num of box} we have 
\[
0=\frac2{(\alpha_k|\alpha_k)}\sum_{b,i}(iL_i^{(b)}-\la_b)(\Lab_k|\Lab_b).
\]
Since$(\Lab_k|\Lab_b)>0$ for any $b\in I_0$, we obtain $\la_b=\sum_iiL_i^{(b)}$ for any $b\in I_0$, 
which concludes $|\la|=\sum_bb\la_b=|\El|$.
\end{proof}

\begin{remark}
The minimum rank $n$ that makes the theorem hold is determined by the condition that 
\eqref{eq:condition_on_rank} is satisfied for any $\mu=\nu^{(a^\dia)}$ such that $c_{\la\mu}^\eta>0$.
It is in fact given by
\[
\ell(\la)+\frac{|\El|-|\la|}{\mathrm{width}(\dia)}\le a^\dia.
\]
\end{remark}

\end{document}